\documentclass{biometrika}

\usepackage{amsmath}

\usepackage{pgf,tikz} 
\usepackage{tikz-cd}
\usetikzlibrary{arrows.meta}
\usetikzlibrary{intersections}

\usepackage{graphicx}
\usepackage{float} 
\usepackage[hidelinks]{hyperref}

\usepackage{enumitem} 
\usepackage{nicematrix}

\usepackage{newtxtext}
\usepackage[subscriptcorrection]{newtxmath}

\usepackage{mathtools}

\DeclareRobustCommand{\unEdge}{%
  \mathrel{%
    \vcenter{%
      \hbox{%
        \tikz[baseline=0.0ex, x=1.5ex, y=1.5ex]{
          \draw[line width=0.15ex] (0,0) -- (1.2,0);
          \draw[line width=0.15ex] (0,0.25) -- (0,-0.25);
          \draw[line width=0.15ex] (1.2,0.25) -- (1.2,-0.25);
        }%
      }%
    }%
  }%
}
\tikzset{edgeblunt/.style = {{|[scale=0.5]}-{|[scale=0.5]}}}

\graphicspath{{./art/}}

\usepackage[plain,noend]{algorithm2e}

\makeatletter
\renewcommand{\algocf@captiontext}[2]{#1\algocf@typo. \AlCapFnt{}#2} 
\def\@algocf@capt@plain{top}
\renewcommand{\algocf@makecaption}[2]{%
  \addtolength{\hsize}{\algomargin}%
  \sbox\@tempboxa{\algocf@captiontext{#1}{#2}}%
  \ifdim\wd\@tempboxa >\hsize
    \hskip .5\algomargin%
    \parbox[t]{\hsize}{\algocf@captiontext{#1}{#2}}
  \else%
    \global\@minipagefalse%
    \hbox to\hsize{\box\@tempboxa}
  \fi%
  \addtolength{\hsize}{-\algomargin}%
}
\makeatother

\DeclareMathOperator*{\argmin}{arg\,min}

\begin{document}




\title{Identifiability and Estimation in Continuous Lyapunov Models}

\author{Cecilie Olesen Recke}
\affil{Department of Mathematical Sciences, University of Copenhagen,\\ 
\email{cor@math.ku.dk}}

\author{Niels Richard Hansen}
\affil{Department of Mathematical Sciences, University of Copenhagen,\\ 
\email{niels.r.hansen@math.ku.dk}}

\maketitle

\begin{abstract}
Cross-sectional observations from a dynamical system can be modeled via 
steady-state distributions of Markov processes. The major challenge is then to determine
whether the process parameters can be identified and estimated from the
steady-state distributions. We study this problem for continuous Lyapunov models
that arise as steady-state distributions of the solution to a multivariate stochastic 
differential equation, whose linear drift matrix is parametrized by a directed graph. 
We derive equations for the cumulant tensors of any order for this distribution, which
generalize the well-known covariance Lyapunov equation. Under a non-Gaussianity assumption
we prove generic identifiability of the drift matrix for any connected graph using the 
equations for the higher-order cumulants. Based on the identifiability result, we propose 
a new semiparametric estimator of the drift matrix, and we derive its asymptotic distribution.
A simulation study demonstrates the asymptotic validity of the estimator 
but shows that it is only accurate for relatively large sample sizes, illustrating the hardness of the unconstrained estimation problem.
\end{abstract}

\begin{keywords}
 Cross-Sectional Observations; Graphical Modeling; Higher-Order Cumulants; Lyapunov Equations; Parameter Identifiability; Steady-State Distributions
\end{keywords}

\thispagestyle{empty}

\section{Introduction}

\subsection{Background}

The problem of estimating the parameters of a dynamical model from cross-sectional 
data has recently received considerable attention \citep{varando20a, Lorch:2024, guan2024identifyingdriftdiffusioncausal, bleile_efficient_2026}. An
important application is learning biological mechanisms from single cell data \citep{Wang:2023aa, Rohbeck:2024, lorch_latent_2026}, but this is challenging
with access to only temporal snapshots rather than dynamic trajectories. 
One largely outstanding question is to what extent the parameters are identifiable
from cross-sectional distributions. Partial results were given by 
\cite{Dettling:2023} for a Gaussian Markov process, and they showed, in particular, 
that the drift parameters of this process are only identifiable under strong assumptions. 

We consider a similar but more general class of Markov processes and 
give the first positive identifiability result regarding the drift parameters 
when the process is non-Gaussian. This relies on a thorough treatment of
the algebraic structures of higher-order cumulants determined by the model. 
We use our results to propose a new semiparametric estimator of 
the drift parameters and investigate its properties, asymptotically and
 in simulations.

\subsection{Main contributions}

We consider the $d$-dimensional steady-state distribution of the stationary Markov process that solves the stochastic differential equation (SDE)
\begin{equation}
    \label{eq::SDErep}
    dX_t = M X_t dt + dZ_t, 
\end{equation}
where $M$ is a stable $d \times d$ matrix and $Z = (Z_t)_{t \geq 0}$ is a Lévy process. Such a distribution is known as an $M$-selfdecomposable distribution \citep{Sato:1984, Masuda:2004}, 
and we regard it as a model of a single cross-sectional observation from the stationary process.
The entries of the matrix $M$ are the drift parameters of the process, and following \cite{varando20a}
we associate with any directed graph $G$ on nodes $\{1, \ldots, d\}$ 
a sparsity pattern of $M$: only entries corresponding to a directed edge
in $G$ can be non-zero, see Figure \ref{fig::ZeroPatternEx}. 
For a given graph $G$, our focus is on 
identification and estimation of $M$ from cross-sectional observations. 

We first derive, in Proposition \ref{prop::tensor-lyapunov}, the 
equations that determine all cumulants of the steady-state distribution from the 
matrix $M$ and the cumulants $\mathcal{C}_2, \mathcal{C}_3, \ldots$ of $Z_1$. Our main results are then Theorem \ref{thm::GenericIdResult} and Corollary \ref{cor::genericIDMoffdiag}. For those 
we effectively assume that the Lévy process has independent coordinates and is not 
a Brownian motion. If the graph $G$ is also connected then
$(M, \mathcal{C}_2, \mathcal{C}_r)$ is \emph{generically} identifiable up to a common scaling factor
from the covariance matrix and the $r$-th-order cumulant tensor. The possible exception set where identifiability fails is a proper algebraic subset of the parameter set. We refer to Theorem \ref{thm::GenericIdResult} and Corollary \ref{cor::genericIDMoffdiag} for the details, 
but note that identifiability up to a scaling factor is the best possible result without 
additional assumptions, such as assuming the cumulants of $Z_1$ known.

To prove Theorem \ref{thm::GenericIdResult} we derive trek rules of independent interest, collected in Appendix \ref{Appendix::treks}, that express how the $k$-th-order cumulants depend on $M$
and $\mathcal{C}_k$. This allows us to show that the equations from Proposition \ref{prop::tensor-lyapunov} can generically be solved uniquely for the parameters in terms of the cumulants. These equations naturally suggest an estimator of $M$, up to a scaling factor.
We show consistency and asymptotic normality of this estimator in Theorem \ref{thm::AsympoticsSingularValueEst}, with the most important contribution being the explicit formula for the asymptotic covariance matrix. The Julia package \url{https://github.com/nielsrhansen/SteadyStateStatistics.jl} includes an 
implementation. A simulation study shows that while 
the estimator behaves asymptotically according to the theory, it has a notable 
finite sample bias and the sample size needs to be fairly large for accurate 
estimation.

\subsection{Relations to existing literature}

It is well known that the covariance matrix $\Sigma$ for an $M$-selfdecomposable distribution
solves the continuous Lyapunov equation 
\begin{equation}
\label{eq::SecondOrderLyapunov}
    M \Sigma + \Sigma M^T + \mathcal{C}_2 = 0,
\end{equation}
where $\mathcal{C}_2$ is the covariance matrix of $Z_1$. A proof is given by
\cite{Jacobsen+1993+86+94} assuming the Lévy process is a Brownian motion, and similar
arguments apply in general once the SDE \eqref{eq::SDErep} is known to have a 
unique stationary solution. In fact, it 
also follows rather easily from Theorem 4.1 by \cite{Sato:1984} combined with their 
equations (2.15) and (2.18). Our Proposition~\ref{prop::tensor-lyapunov} generalizes 
this result and shows that the $k$-th-order cumulant solves
the \emph{$k$-th-order continuous Lyapunov equation}. Though this equation has been
studied previously \citep{Xu:2021}, its connection to the cumulants of $M$-selfdecomposable
distributions is new. 

Estimation of $M$ from cross-sectional observations, assuming a sparse graph and a diagonal 
$\mathcal{C}_2$, was
investigated by \cite{fitch2020learningdirectedgraphicalmodels}, \cite{varando20a} 
and \cite{pmlr-v236-dettling24a} using lasso-type estimators. They only considered 
the covariance equation \eqref{eq::SecondOrderLyapunov}, effectively assuming 
observations to be Gaussian.  
\cite{Dettling:2023} showed global identifiability from the second order equation \eqref{eq::SecondOrderLyapunov} when $\mathcal{C}_2$ is known and the 
graph is simple, but they also showed by example that there exist sparse graphs 
where $M$ is not even generically identifiable from \eqref{eq::SecondOrderLyapunov}. Still 
considering only the second order Lyapunov equation \eqref{eq::SecondOrderLyapunov},
\cite{vanseeventer2026signidentifiabilitycausaleffects} derived results on identification of 
just the signs of entries in $M$.

\cite{young2019identifying} and \cite{recke2026identifiabilitygraphicaldiscretelyapunov} derived related identifiability 
results for discrete Lyapunov models where the cumulant equations arise from the 
steady-state distribution of a vector autoregressive model. Only \cite{recke2026identifiabilitygraphicaldiscretelyapunov} considered the non-Gaussian setting. 

\cite{Lorch:2024} and \cite{bleile_efficient_2026} considered a setup similar to ours 
with cross-sectional observations from a stationary diffusion, that is, a 
stationary solution to an SDE driven by Brownian motion. They proposed 
and investigated different estimators of drift and diffusion parameters.
\cite{guan2024identifyingdriftdiffusioncausal} also investigated estimation
via cross-sectional observations from a diffusion, but they relied on
observations from multiple time points from a process out of its steady state 
to get identification. 
While these developments are closely related to the problem we consider, 
they are not directly applicable, because the solution to the SDE \eqref{eq::SDErep} is not a diffusion unless the Lévy process is a Brownian motion.

Our contributions are more inspired by the literature on 
identifiability of the mixing matrix in independent 
component analysis \citep{COMON1994287}, and its applications to linear non-Gaussian acyclic models \citep{JMLRv7shimizu06a}. Of particular relevance are the recent contributions by \cite{Zwiernik2024nonindependentcomponentsanalysis}
on identifiability and estimation of the mixing matrix using 
higher-order moments and cumulants. While their overall strategy is similar to ours, 
the algebraic techniques we use are quite different as the fundamental 
equations we study are different.

\section{The Continuous Lyapunov Model}

In this section we define the semiparametric \emph{continuous Lyapunov model}. Then we present the cumulant defining equations that we will use to show our main identifiability results for the parameters of interest, and which form the basis of our estimator.

The model consists of $d$-dimensional steady-state distributions of solutions to the SDE \eqref{eq::SDErep}, parametrized by the drift matrix $M$ and the distribution of the Lévy process $(Z_t)_{t \geq 0}$. There is a unique such steady-state distribution when $M$ is a stable matrix, 
that is, all its eigenvalues have negative real part, and when the Lévy process satisfies the 
weak integrability condition
\begin{equation}
    \label{eq:levy-int-cond}
    \mathbb{E}(\log(1 + \|Z_1\|)) < \infty.    
\end{equation}
This follows from Theorem 4.1 by \cite{Sato:1984}. The steady-state distribution is known as 
an \emph{$M$-selfdecomposable distribution}.

The drift matrix is the parameter of primary interest, and 
we will allow for it to have a particular sparsity structure according to a directed graph, see Figure \ref{fig::ZeroPatternEx}. Formally, with 
$[d] = \{1, \dots, d\}$ and with $G = ([d],E)$ a directed graph with node set $[d]$ 
and edge set $E$, we define the set
\begin{equation*}
    \mathbb{R}^{E} = \{ M \in \mathbb{R}^{d \times d} \; | \; M_{ij} = 0 \text{ if } j \rightarrow i \not \in E \}. 
\end{equation*}
We denote by $\mathbb{R}^E_{\mathrm{stab}}$ the subset of $\mathbb{R}^{E}$ where $M$
is also stable.

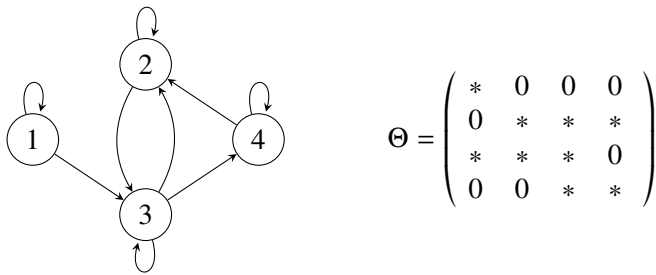
\begin{figure}
    \centering
\begin{center}
    \begin{tikzpicture}
      \node[circle, draw, minimum size=0.5cm] (1) at (-1.5,0) {1};
      \node[circle, draw, minimum size=0.5cm] (2) at (0,1) {2};
      \node[circle, draw, minimum size=0.5cm] (3) at (0,-1) {3};
      \node[circle, draw, minimum size=0.5cm] (4) at (1.5,0) {4};
      
      \draw[->,  >=stealth] (1) edge[loop above] (1);
      \draw[->,  >=stealth] (2) edge[loop above] (2);
      \draw[->,  >=stealth] (3) edge[loop below] (3);
      \draw[->,  >=stealth] (4) edge[loop above] (4);
      \draw[->,  >=stealth] (1) edge (3);
      \draw[->,  >=stealth] (4) edge (2);
      \draw[->,  >=stealth] (3) edge (4);
      \draw[->,  >=stealth] (2) edge [bend right] (3);
      \draw[->,  >=stealth] (3) edge[bend right] (2);

    \node at (5, 0) {$
    \Theta = \left( 
    \begin{array}{cccc}
     \;  * \; \; \; & 0 \; \; \; & 0 \; \; \; & 0 \; \;  \\
     \; 0 \; \; \; & * \; \; \; & * \; \; \; & * \; \;  \\
     \;  * \; \; \; & * \; \; \; & * \; \; \; & 0 \; \; \\
    \;  0 \; \; \; & 0 \; \; \; & * \; \; \; & * \; \; \\
    \end{array}
    \right) 
    $};
      
    \end{tikzpicture} 
    \end{center}
    \caption{Example of a directed graph $G = ([4], E)$ and corresponding zero-pattern of $M$ in $\mathbb{R}^{E}$}
    \label{fig::ZeroPatternEx}
\end{figure}

\begin{definition}
    \label{def::ContLyapunovLevy}
    Let $G = ([d],E)$ be a directed graph. The continuous Lyapunov model is the set of probability distributions $\mathcal{P}_G$ on $\mathbb{R}^d$ that are steady-state distributions of solutions to the SDE \eqref{eq::SDErep} for an $M \in \mathbb{R}^{E}_{\mathrm{stab}}$ and a Lévy process $(Z_t)_{t \geq 0}$ that satisfies the integrability condition \eqref{eq:levy-int-cond}.
\end{definition}

We will parametrize the distribution of the Lévy process by the distribution 
of $Z_1$. This infinitely divisible distribution, which we denote $Q_1$, 
uniquely determines the distribution of the entire process. We denote by 
$\mathcal{Q}$ the set of infinitely divisible distributions $Q_1$ on $\mathbb{R}^d$ satisfying 
the integrability condition
\begin{equation}
	\label{eq:levy-int-cond2}
	\int \log(1 + \|z\|)) Q_1(\mathrm{d}z) < \infty.
\end{equation}
For any graph $G$, the model $\mathcal{P}_G$ is 
therefore a set of $M$-selfdecomposable distributions parametrized by the finite 
dimensional parameter space $\mathbb{R}^{E}_{\mathrm{stab}}$ and the infinite dimensional 
space of infinitely divisible distributions $\mathcal{Q}$. We denote this parametrization by 
$$
    \psi : \mathbb{R}^{E}_{\mathrm{stab}} \times \mathcal{Q} \to \mathcal{P}_G.
$$

Provided that a distribution in $\mathcal{P}_G$ has finite $k$-th-order moment, we can
derive a simple generalization of the second-order Lyapunov equation \eqref{eq::SecondOrderLyapunov} that determines the $k$-th-order cumulant of the 
distribution. See \cite{mccullagh2018tensor} for background on cumulants.

In order to write these higher-order cumulant equations we use the notion of the $n$-mode product, also known as the Tucker product, between a tensor and a matrix. The $n$-mode product of a tensor $T \in \mathbb{R}^{I_1 \times I_2 \times \cdots \times I_N}$ with a matrix $M \in \mathbb{R}^{J \times I_n}$, denoted $T \times_n M$, 
is an $I_1 \times \cdots \times I_{n-1} \times J \times I_{n+1} \times \cdots \times I_N$ tensor with coordinate-wise entries 
\begin{equation*}
    (T \times_n M)_{i_1 \cdots i_{n-1} j i_{n+1} \cdots i_N} = \sum_{i_n = 1}^{I_n} T_{i_1 \cdots i_{n-1} i_n i_{n+1} \cdots i_{N}} M_{ji_n}.
\end{equation*}

\begin{proposition} \label{prop::tensor-lyapunov} Let $M$ be a $d \times d$ stable matrix
and let $Z = (Z_t)_{t \geq 0}$ denote a $d$-dimensional Lévy process with 
finite $k$-th-order moment. Then the corresponding $M$-selfdecomposable distribution 
has finite $k$-th-order moment, and the $k$-th-order cumulant tensor $\mathcal{K}$ solves the equation 
\begin{equation} \label{eq:k-lyapunov}
    \mathcal{K} \times_1 M + \ldots + \mathcal{K}  \times_k M + \mathcal{C}_k = 0
\end{equation}
where $\mathcal{C}_k = \mathrm{cum}_k(Z_1)$ is the $k$-th-order cumulant tensor of $Z_1$. Equation \eqref{eq:k-lyapunov} will be denoted the $k$-th-order continuous Lyapunov equation. 
\end{proposition}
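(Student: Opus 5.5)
We will work with the explicit stochastic-integral representation of the $M$-selfdecomposable distribution from \cite{Sato:1984}: the steady-state distribution is the law of
\[
X = \int_0^\infty e^{sM}\,\mathrm{d}Z_s ,
\]
whose characteristic exponent is $\log \hat\mu(u) = \int_0^\infty \log \hat Q_1(e^{sM^T}u)\,\mathrm{d}s$. Here $\log\hat Q_1$ is the Lévy–Khintchine exponent of $Z_1$ (Theorem 4.1 of \cite{Sato:1984}). The plan is to differentiate this identity $k$ times at $u=0$ to read off the cumulants, and then to obtain the Lyapunov equation by integrating a derivative in $s$.

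\textbf{Step 1: finite moments.} Finite $k$-th moment of $Z_1$ is equivalent to $\int_{\|z\|>1}\|z\|^k\,\nu(\mathrm{d}z)<\infty$ for the Lévy measure $\nu$. The Lévy measure of $X$ is $\nu_X(B)=\int_0^\infty \nu(e^{-sM}B)\,\mathrm{d}s$. Stability of $M$ gives $\|e^{sM}\|\le c e^{-\lambda s}$ with $\lambda>0$, so
\[
\int_{\|x\|>1}\|x\|^k\,\nu_X(\mathrm{d}x) \le \int_0^\infty\int \|e^{sM}z\|^k\, 1\{\|e^{sM}z\|>1\}\,\nu(\mathrm{d}z)\,\mathrm{d}s .
\]
For $\|z\|>1$ this is bounded by $c^k\int e^{-k\lambda s}\,\mathrm{d}s\int_{\|z\|>1}\|z\|^k\,\mathrm{d}\nu$. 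For $\|z\|\le 1$ the indicator forces $s\le \log(c\|z\|)/\lambda$, which restricts to $\|z\|>1/c$, a set of finite $\nu$-measure. Hence $X$ has finite $k$-th moment.

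\textbf{Step 2: cumulants via the time integral.} Write $\psi(u)=\log\hat Q_1(u)$. By Step 1 it is $C^k$ near $0$, and $\partial^k\psi(0)=i^k\,\mathcal C_k$ in tensor form. Applying the chain rule to $u\mapsto \psi(e^{sM^T}u)$ gives the $k$-th derivative at $0$ as $i^k\,\mathcal C_k\times_1 e^{sM}\cdots\times_k e^{sM}$. We then justify exchanging the derivative with the $s$-integral by dominated convergence. Near $u=0$ the derivatives of order $j\le k$ are bounded by $C\|e^{sM}\|^j$, and these bounds are integrable because of the exponential decay. This yields
\[
\mathcal K=\int_0^\infty \mathcal C_k\times_1 e^{sM}\times_2\cdots\times_k e^{sM}\,\mathrm{d}s .
\]

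\textbf{Step 3: the Lyapunov equation.} Let $F(s)=\mathcal C_k\times_1 e^{sM}\cdots\times_k e^{sM}$. By the product rule and multilinearity of the mode products,
\[
F'(s)=\sum_{n=1}^k F(s)\times_n M .
\]
This uses that $e^{sM}$ commutes with $M$, together with $(T\times_n A)\times_n B=T\times_n(BA)$. Integrating from $0$ to $\infty$, with $F(\infty)=0$ by stability, gives
\[
-\mathcal C_k=\sum_n \mathcal K\times_n M ,
\]
since the linear maps $T\mapsto T\times_n M$ pass through the integral. This is equation \eqref{eq:k-lyapunov}.

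The main obstacle is the analytic bookkeeping in Steps 1–2: the moment transfer, and differentiating under the integral when $\psi$ is only $C^k$ rather than analytic. A cleaner route for Step 2 is to avoid derivatives of $\psi$ entirely. The cumulant tensor of order $k\ge 2$ of an infinitely divisible law is $\int z^{\otimes k}\,\nu(\mathrm{d}z)$, plus the Gaussian part when $k=2$. Substituting the formula for $\nu_X$ then gives the integral formula of Step 2 directly by Tonelli/Fubini. The case $k=1$ uses the drift term and follows from taking expectations in the SDE.
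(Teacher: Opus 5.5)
Your proof is correct and follows the same skeleton as the paper's: the representation $X=\int_0^\infty e^{sM}\,\mathrm{d}Z_s$, then the integral formula $\mathcal{K}=\int_0^\infty \mathcal{C}_k\times_1 e^{sM}\cdots\times_k e^{sM}\,\mathrm{d}s$, then the Lyapunov equation. The differences are only in how the steps are justified. The paper gets the integral formula from multilinearity of cumulants and the ``diagonal'' cumulant measure of $Z$, and then cites Corollary 3.1 of \cite{Xu:2021}. You instead derive the formula from the Lévy--Khintchine exponent (or Lévy measure) of $X$, prove the finite-moment claim explicitly (the paper leaves it implicit), and obtain the equation directly by integrating $F'(s)=\sum_{n}F(s)\times_n M$ over $[0,\infty)$.
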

The $k$-th-order continuous Lyapunov equation is linear in both the parameters $(M, \mathcal{C}_k)$ and
the $k$-th-order cumulant $\mathcal{K}$. Furthermore, given a stable $M$ and any $k$-th-order tensor $\mathcal{C}_{k}$, there exists a unique $k$-th-order tensor $\mathcal{K} $ solving \eqref{eq:k-lyapunov}. This follows from Corollary 3.1 by \cite{Xu:2021}. Appendix \ref{sec:lyapunov-appendix} includes
the proof of Proposition \ref{prop::tensor-lyapunov} and additional details
related to equation \eqref{eq:k-lyapunov}, such as the statement of Corollary 3.1 by \cite{Xu:2021} and an alternative proof of existence and uniqueness of the solution to equation \eqref{eq:k-lyapunov} that also applies to non-stable $M$.

For our identification results, we will focus on the combination of the covariance equation and one additional $r$-th-order continuous Lyapunov equation.
For estimation it is in practice most relevant to consider $r = 3$ or $r = 4$, or a combination to possibly achieve a more efficient estimator.

In order to show our identification results, we consider a subset of 
distributions in $\mathcal{P}_G$ where the $r$-th-order cumulants exist,
and where the second- and $r$-th-order cumulants of the Lévy process are diagonal with 
non-zero entries on the diagonal entries.

\begin{definition}
\label{dfn:P2rG}
Let $G = ([d], E)$ be a directed graph on $d$ nodes and let $r \geq 3$ be an integer. Denote by 
$\mathcal{P}^{2, r}_{G} \subseteq \mathcal{P}_G$ the subset of the continuous Lyapunov model 
for which the Lévy process has finite $r$-th-order moment and with cumulant tensors $\mathcal{C}_2$
and $\mathcal{C}_r$ being diagonal with non-zero diagonal entries.
\end{definition}

We let $\mathcal{Q}^{2, r}$ denote the subset of $\mathcal{Q}$ consisting of distributions with 
finite $r$-th-order moment and with the cumulant tensors $\mathcal{C}_2$ and $\mathcal{C}_r$ 
being diagonal with non-zero diagonal entries. Then  
$$
    \mathcal{P}^{2, r}_{G} = \psi(\mathbb{R}^{E}_{\mathrm{stab}} \times \mathcal{Q}^{2, r}),
$$
that is, the submodel $\mathcal{P}^{2, r}_{G}$ is parametrized by 
$\mathbb{R}^{E}_{\mathrm{stab}} \times \mathcal{Q}^{2, r}$ via the parametrization $\psi$.
If the Lévy process has \emph{independent} coordinates, that is, if $Q_1$ is a product measure, 
any cumulant tensor is diagonal, provided it exists. Since we only need that $\mathcal{C}_2$ and $\mathcal{C}_r$ are diagonal, we avoid making the stronger independence assumption in the 
definition of $\mathcal{P}^{2, r}_{G}$. 

We also assume that the diagonal entries of 
$\mathcal{C}_2$ and $\mathcal{C}_r$ are non-zero. For $\mathcal{C}_2$ this assumption is a 
non-degeneracy condition, while for $\mathcal{C}_r$ this is the specific non-Gaussianity assumption 
we need to establish identifiability. 

In addition to the abstract parameter and model spaces we will also introduce the
finite-dimensional spaces of cumulants resulting from the model spaces. Let 
$\mathrm{Sym}^k(\mathbb{R}^d)$ denote the set of $k$-th-order 
$d \times d \times \ldots \times d$ symmetric tensors, and let $\mathrm{Sym}^k(\mathbb{R}^d)^*$ 
denote the subset of tensors whose diagonal entries are non-zero. For 
$k = 2$ we let $\mathrm{PD}_d \subseteq \mathrm{Sym}^2(\mathbb{R}^d)^*$ 
denote the subset of symmetric $d \times d$-matrices that are also positive definite.

\begin{definition}
\label{dfn:cumulantmodel}
Let $G = ([d], E)$ be a directed graph with $d$ nodes and let $r \geq 3$ be an integer. 
The second- and $r$-th-order Lyapunov cumulant model is the set of second- and $r$-th-order tensors in the set 
{\small
\begin{equation*}
\mathcal{M}^{2, r}_G = 
 \left\{ 
(\Sigma, \mathcal{K} ) \in (\mathrm{PD}_p, \mathrm{Sym}^r(\mathbb{R}^d)) \; \left\vert \; \begin{aligned}
    &M \Sigma + \Sigma M^T + \mathcal{C}_2 = 0, \\ 
    &\mathcal{K}  \times_1 M +  \cdots+ \mathcal{K}  \times_r M + \mathcal{C}_r = 0, \\ 
    &M \in \mathbb{R}^{E}_{\mathrm{stab}}, \text{ and } \mathcal{C}_2 \in \mathrm{PD}_{d}, 
    \mathcal{C}_r \in \mathrm{Sym}^r(\mathbb{R}^d)^* \text{ diagonal}
\end{aligned}\right.
\right\}.
\end{equation*}
}
\end{definition}

Let $\mathrm{cum}_k(P) \in \mathrm{Sym}^k(\mathbb{R}^d)$ denote the $k$-th-order cumulant for a 
probability distribution $P$ with finite $k$-th-order moment. Then for 
$(M, Q_1) \in \mathbb{R}^E_{\mathrm{stab}} \times \mathcal{Q}$ with $Q_1$ having finite $k$-th-order 
moment, 
$$
    \mathrm{cum}_k(\psi(M, Q_1)) = \mathcal{K} 
$$
with $\mathcal{K}$ solving \eqref{eq:k-lyapunov} for $\mathcal{C}_k = \mathrm{cum}_k(Q_1).$ 
Letting additionally $\mathrm{cum}_{2,r}(P) = (\mathrm{cum}_2(P), \mathrm{cum}_r(P))$, we see that 
$$
    \mathrm{cum}_{2,r} : \mathcal{P}^{2, r}_G \to \mathcal{M}^{2, r}_G 
$$
maps the distribution $P$ in the model $\mathcal{P}^{2, r}_G$ to its corresponding 
pair $(\Sigma, \mathcal{K} )$ of second- and $r$-th-order cumulants in $\mathcal{M}^{2, r}_G$. See the commutative diagram in Figure \ref{fig:comdia} for an overview of these maps and sets. 
While $\mathrm{cum}_{2,r}$ may not be surjective for all $r$, we show that 
it is for $r$ odd, and for $r$ even its image has full dimension, see Proposition \ref{prop::piSurjective} in Appendix \ref{sec:AuxResultsCompoundPoisson} in combination with the commutative diagram in Figure \ref{fig:comdia}.

\section{Identifiability}

\subsection{Overview}

Usually, the parameter $M$ would be said to be identified from the probability 
measure $P \in \mathcal{P}_G$ if it were uniquely determined by $P$. Section \ref{sec:lim} shows
that this is not possible and that we can only hope to identify 
$M$ up to scaling in general. We give necessary conditions, including that $G$ is connected, 
for identification up to a global scaling factor.

Our main Theorem~\ref{thm::GenericIdResult} shows that the conditions 
are generically essentially sufficient as well and describes the precise
injectivity properties of the parameterization of the cumulants via the Lyapunov equations. 
The conclusion is rephrased in Corollary~\ref{cor::genericIDMoffdiag} as
$(M, \mathcal{C}_2, \mathcal{C}_r)$ being generically identifiably, up to a joint 
scaling of these parameters, from a combination of the covariance matrix 
and one $r$-th-order cumulant tensor.

The proof of Theorem~\ref{thm::GenericIdResult} exploits the linearity of the continuous 
Lyapunov equations, which allows us to transfer 
the question of identifiability to rank questions for certain matrices. We describe the 
necessary reorganization of the Lyapunov equations in full detail in Section \ref{sec:proof-reorganize}. Section \ref{sec::Estimation} on estimation also relies heavily on the reorganization. 
The non-trivial, but technical, arguments about rank properties are given in 
Appendix \ref{Appendix::MainresultLemmas}. When $G$ is not connected, Section \ref{sec:nonid} shows 
that identification up to a global scaling factor fails, and that identification is only 
possible up to a scaling factor for each connectivity component.

\subsection{Limitations to identifiability}
\label{sec:lim}

For the parametrized model $\mathcal{P}_G$, the standard notion of identifiability is simply the 
question of whether the map $\psi : \mathbb{R}^{E}_{\mathrm{stab}} \times \mathcal{Q} \to \mathcal{P}_G$ is injective. To see that $\psi$ is not injective, we write \eqref{eq::SDErep} as a stochastic 
integral equation and do a change-of-variable in the integration to get 
$$
    X_{ct} = X_0 + \int_0^t (cM) X_{cs} \mathrm{d} s + Z_{ct}
$$
for any $c > 0$. This shows that $\psi(M, Q_1) = \psi(cM, Q_c)$ for all $c > 0$, where $Q_c$ is the distribution of $Z_c$, and $\psi$ is thus not injective. The best possible 
identification result would therefore be that 
$\psi^{-1}(\psi(M, Q_1)) = \{ (cM, Q_c) \mid c > 0 \},$
that is, that the parameter $(M, Q_1)$ is identifiable up to a common scaling factor. 

From the Lévy-Khintchine representation of $Q_c$ it likewise follows that 
$\mathrm{cum}_k(Z_{c}) = c \mathcal{C}_k$ for any $k$. We 
see that the lack of identifiability is also reflected in the cumulant equations 
\eqref{eq:k-lyapunov}: if $(M, \mathcal{C}_k)$ is a solution with $M$ stable for given $\mathcal{K}$ then
$(cM, c\mathcal{C}_k)$ is a solution with $cM$ stable for all $c > 0$. 
This particular lack of identifiability is intuitively reasonable, 
since we cannot identify the absolute speed of the stationary solution of 
\eqref{eq::SDErep} from observations at a single time point.

Since $M$ is our parameter of interest, we are primarily interested in whether there exists 
a map
\begin{equation}
    \label{eq:idmap}
    P = \psi(M, Q_1) \mapsto \{c M \mid c > 0\}
\end{equation}
from $\mathcal{P}_G$ to the equivalence classes in $\mathbb{R}^{E}_{\mathrm{stab}}$ of 
matrices identical up to scaling. Such a map cannot exists in general either, 
because if $G$ has at least two connectivity components, we can 
rescale the parameters for each component independently, see also Corollary \ref{cor::RankConnectedComponents}. Additionally, even if $G$ is connected, but 
$Q_1 = \mathcal{N}(0, \mathcal{C}_2)$ is a Gaussian distribution with diagonal 
covariance matrix $\mathcal{C}_2,$ then $M$ cannot always be identified up to 
a scaling from $P = \psi(M, Q_1)$. Indeed, \cite{Dettling:2023} showed this by their 
Example 8.7 for the graph in Figure \ref{fig::UnIDCov} for any choice of diagonal 
$\mathcal{C}_2$.

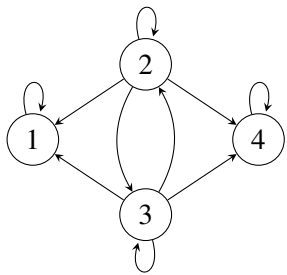
\begin{figure}
    \centering
\begin{center}
    \begin{tikzpicture}
      \node[circle, draw, minimum size=0.5cm] (1) at (-1.5,0) {1};
      \node[circle, draw, minimum size=0.5cm] (2) at (0,1) {2};
      \node[circle, draw, minimum size=0.5cm] (3) at (0,-1) {3};
      \node[circle, draw, minimum size=0.5cm] (4) at (1.5,0) {4};
      
      \draw[->,  >=stealth] (1) edge[loop above] (1);
      \draw[->,  >=stealth] (2) edge[loop above] (2);
      \draw[->,  >=stealth] (3) edge[loop below] (3);
      \draw[->,  >=stealth] (4) edge[loop above] (4);
      \draw[->,  >=stealth] (2) edge (1);
      \draw[->,  >=stealth] (3) edge (1);
      \draw[->,  >=stealth] (2) edge (4);
      \draw[->,  >=stealth] (3) edge (4);
      \draw[->,  >=stealth] (2) edge [bend right] (3);
      \draw[->,  >=stealth] (3) edge[bend right] (2);
    \end{tikzpicture} 
    \end{center}
    \caption{Directed graph, $G$, such that the drift matrix, $M$, is \emph{not} identifiable 
    up to scaling from the Gaussian model $\psi(M, \mathcal{N}(0, \mathcal{C}_2))$ for any
    diagonal $\mathcal{C}_2$ \citep{Dettling:2023}.}
    \label{fig::UnIDCov}
\end{figure}

Our goal is to find a submodel of $\mathcal{P}_G$ such that we can establish the existence 
of the map \eqref{eq:idmap} defined on this submodel, provided that $G$ is connected. 
A candidate for such a submodel is $\mathcal{P}^{2, r}_{G}$ from Definition~\ref{dfn:P2rG}, 
which rules out all Gaussian distributions. It turns out that we need to further remove 
an exception set from $\mathcal{P}^{2, r}_{G}$, but we can argue that this set is, 
in a certain sense, negligible. The precise way to phrase this is via the 
finite-dimensional parametrization of the second- and $r$-th-order Lyapunov cumulant model 
$\mathcal{M}^{2,r}_G$ in terms of $(M, \mathcal{C}_2, \mathcal{C}_r)$ of which the negligible set will be an algebraic subset. 

\subsection{Main results}

To simplify notation, we identify \emph{diagonal} $\mathcal{C}_2$ and $\mathcal{C}_r$ tensors with 
their diagonals as elements in $\mathbb{R}^{d}_{+}$ and $(\mathbb{R} \setminus \{ 0 \})^d$,
respectively, and we introduce the parameter set 
\begin{equation*}
    \Theta_G = \mathbb{R}^{E}_{\mathrm{stab}} \times \mathbb{R}^{d}_{+} \times (\mathbb{R} \setminus \{ 0 \})^d. 
\end{equation*}
The set $\Theta_{G}$ is an open set in standard Euclidean topology. 
Elements in $\Theta_{G}$ are denoted \mbox{$\theta = (M, \mathcal{C}_2, \mathcal{C}_r)$.}
The question of identifiability is then settled in terms of the rational parametrization  
\begin{align*}
    \varphi_{G,r}: \Theta_G &\rightarrow \mathcal{M}^{2,r}_G \\
    (M, \mathcal{C}_2, \mathcal{C}_r) &\mapsto (\Sigma, \mathcal{K} ),
\end{align*}
where $\Sigma$ and $\mathcal{K} $ are the unique solutions to the corresponding second- and $r$-th-order Lyapunov equation, as guaranteed to exist by Proposition \ref{prop::XuResult}. By Definition \ref{dfn:cumulantmodel}, $\varphi_{G,r}$ is surjective. 

We can now state our main 
identifiability result pertaining to the injectivity properties of $\varphi_{G,r}$.

\begin{theorem}
    \label{thm::GenericIdResult}
    Let $d \geq 2$,  $r \geq 3$ be integers and let $G = ([d], E)$ be any connected graph with all self-loops present. 
    Then there exists a proper algebraic subset $\mathcal{N}_G \subset \Theta_G$ such that for $\theta \in \Theta_G \setminus \mathcal{N}_G$,
    \begin{equation}
        \label{eq::PrecisestatementaboutDimofFiber}
        \varphi^{-1}_{G,r}(\varphi_{G,r}(\theta)) = \{ c \theta \; | \; c > 0 \}. 
    \end{equation}
\end{theorem}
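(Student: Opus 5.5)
The plan is to exploit the fact that both Lyapunov equations are linear in the unknowns $(M, \mathcal{C}_2, \mathcal{C}_r)$ once $(\Sigma, \mathcal{K})$ is fixed. Fix $\theta_0 = (M_0, \mathcal{C}_{2,0}, \mathcal{C}_{r,0}) \in \Theta_G$ and set $(\Sigma, \mathcal{K}) = \varphi_{G,r}(\theta_0)$. A parameter $\theta = (M, \mathcal{C}_2, \mathcal{C}_r) \in \Theta_G$ lies in the fiber exactly when
\begin{equation*}
M\Sigma + \Sigma M^T + \mathrm{diag}(\mathcal{C}_2) = 0, \qquad \mathcal{K}\times_1 M + \cdots + \mathcal{K}\times_r M + \mathrm{diag}(\mathcal{C}_r) = 0 .
\end{equation*}
This uses the uniqueness of solutions of the Lyapunov equations for stable $M$, by Corollary 3.1 of \cite{Xu:2021}. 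Vectorizing the unknowns as $x = (\mathrm{vec}_E(M), \mathcal{C}_2, \mathcal{C}_r) \in \mathbb{R}^{|E| + 2d}$ turns the system into $A(\Sigma, \mathcal{K})\, x = 0$. Here $A(\Sigma,\mathcal{K})$ is a matrix whose entries are linear in the entries of $\Sigma$ and $\mathcal{K}$, with one row for each index tuple of the symmetric equations (the $\binom{d+1}{2}$ covariance equations and the $\binom{d+r-1}{r}$ equations of order $r$). Since $\theta_0$ is in the kernel, the kernel has dimension at least $1$. If the kernel has dimension exactly $1$, the fiber is contained in $\{c\theta_0 : c \in \mathbb{R}\}$. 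Intersecting with $\Theta_G$, which needs $\mathcal{C}_2 > 0$, forces $c > 0$, and every such $c\theta_0$ lies in the fiber. This gives \eqref{eq::PrecisestatementaboutDimofFiber}.

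The goal is therefore to show that $\mathrm{rank}\, A(\varphi_{G,r}(\theta)) = |E| + 2d - 1$ for generic $\theta$. The composite $\theta \mapsto A(\varphi_{G,r}(\theta))$ is rational, with denominators $\det$ of the Kronecker-sum operators $\sum_i I\otimes\cdots\otimes M\otimes\cdots\otimes I$, and these are nonzero on stable $M$. The condition ``rank $< |E|+2d-1$'' is the vanishing of all minors of size $|E|+2d-1$. After clearing denominators, these minors are polynomials in $\theta$, and their common zero set is an algebraic subset $\mathcal{N}_G$. Because $\Theta_G$ is open and nonempty, $\mathcal{N}_G$ is a \emph{proper} subset as soon as one exhibits a single point $\theta^\ast$, which may even lie outside $\Theta_G$ as long as the denominators do not vanish there, at which the rank equals $|E|+2d-1$. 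I expect to use the trek-rule expansions of Appendix~\ref{Appendix::treks} to write $\Sigma$ and $\mathcal{K}$ as power series in the off-diagonal entries of $M$.

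The main obstacle is constructing this witness point for an \emph{arbitrary} connected $G$. First I would handle the diagonal case $M = -\mathrm{diag}(\lambda)$ with distinct generic $\lambda_i$. Then $\Sigma$ and $\mathcal{K}$ are diagonal, and the equations decouple: the diagonal equations give relations $2\lambda_i\Sigma_{ii} = \mathcal{C}_{2,i}$ and $r\lambda_i \mathcal{K}_{i\cdots i} = \mathcal{C}_{r,i}$. The off-diagonal equations give, for an edge $j\to i$, the constraints $M_{ij}\Sigma_{jj} + M_{ji}\Sigma_{ii} = 0$ from the covariance equation and $M_{ij}\mathcal{K}_{j\cdots j}=0$ from the $(i,j,\dots,j)$ equation of order $r$. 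The latter forces all off-diagonal entries to vanish, because $\mathcal{K}_{j\cdots j}\neq 0$ by non-Gaussianity. This leaves a kernel of dimension $d$ coming from the diagonal unknowns, one free scale per node. That is exactly the disconnected behaviour, so a diagonal point is not a witness.

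I would therefore perturb along a spanning tree $T$ of $G$: set $M = -\mathrm{diag}(\lambda) + \epsilon B$ with $B$ supported on the tree edges, and carry out a first-order perturbation analysis of the kernel. At first order in $\epsilon$, each tree edge $j \to i$ adds a cross term in the $(i,j,\dots,j)$ equation of order $r$ and in the $(i,j)$ covariance entry that involves $\lambda_i, \lambda_j$ and the ratios of the diagonal unknowns at $i$ and $j$. I would show that this couples the scales of $i$ and $j$: the compatibility condition forces the ratio of their free scales to equal $1$ whenever $\mathcal{C}_{r}$ is nonzero and $\lambda_i \neq \lambda_j$. Propagating this condition along the spanning tree cuts the $d$-dimensional kernel down to dimension $1$ for small $\epsilon \neq 0$. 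By lower semicontinuity of rank it suffices to get rank $|E|+2d-1$ for one small $\epsilon$, and the non-tree edges stay forced to zero at order $0$. The delicate part is organizing this perturbation computation, in the reorganized form of Section~\ref{sec:proof-reorganize}, so that the rank gain is verified cleanly edge by edge, including edges oriented in either direction and $2$-cycles.
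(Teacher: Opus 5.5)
Your proposal is correct. The reduction is the same as the paper's: the fiber is the kernel of a matrix linear in $(\Sigma,\mathcal{K})$, and properness of $\mathcal{N}_G$ follows from a single witness. The routes differ in how the witness is certified.

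\textbf{The paper's certification.} It fixes a spanning polytree and takes $M_{ii}=-1/(r\zeta)$, unit tree weights, and identity $\mathcal{C}_2,\mathcal{C}_r$. It then selects a specific $(d^2-1)\times(d^2-1)$ submatrix of $A_{\mathrm{off}}$ and extracts the lowest-degree coefficient in $\zeta$ of its determinant by a Leibniz expansion. That expansion needs the lexicographic ordering, bookkeeping of the $(r-1)$ factors on paths from secondary sources, and Lemma \ref{lem::forsum}.

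\textbf{Relation to your witness.} $\varphi_{G,r}$ is invariant under $\theta\mapsto c\theta$, and rescaling $(\mathcal{C}_2,\mathcal{C}_r)$ only rescales $(\Sigma,\mathcal{K})$. So for rank purposes the paper's witness is your $M=-\tfrac{1}{r}I+\zeta B$, and both proofs analyse the same degeneration to the diagonal point.

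\textbf{Your step closes.} The computation you left open does work. Set $s_j=\Sigma_{jj}$ and $k_j=\mathcal{K}_{j\cdots j}$ at $\epsilon=0$, and write $M'=\mathrm{diag}(\mu)$ for the $d$-dimensional kernel there. For a tree edge $j\to i$, take $y$ with $y_{(ij)}=1$, $y_{(ij\cdots j)}=-s_j/k_j$, and $y_{(ji\cdots i)}=-s_i/k_i$ if $i\to j\in E$. This $y$ is orthogonal to the image of $A_0$. Pairing it with the first-order term $A_1$ gives
\[
\frac{(r-2)\,B_{ij}\,s_j}{(\lambda_i+\lambda_j)\,(\lambda_i+(r-1)\lambda_j)}\,\bigl(\lambda_j\mu_i-\lambda_i\mu_j\bigr).
\]
Hence the projected first-order map on $\ker A_0$ is a weighted incidence matrix of the spanning tree, of rank $d-1$. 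Let $Q$ be the projection onto $(\mathrm{im}\,A_0)^{\perp}$. The Schur-complement bound $\mathrm{rank}\,A(\epsilon)\ge\mathrm{rank}\,A_0+\mathrm{rank}\bigl(QA_1|_{\ker A_0}\bigr)\ge(|E|+d)+(d-1)$ then holds for all small $\epsilon\neq0$.

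\textbf{Two corrections.}
\begin{enumerate}
\item The nondegeneracy comes from the factor $r-2$, i.e.\ multiplicity $r-1$ versus $1$ in the row $(ij\cdots j)$. This is exactly where $r\ge 3$ enters, and it mirrors the paper's factor $(r/2-1)^{d-1}$. Your hypothesis $\lambda_i\neq\lambda_j$ is unnecessary; the paper uses equal diagonal entries.
\item The passage from first order to small $\epsilon$ rests on this Schur-complement bound, not on lower semicontinuity of rank.
\end{enumerate}

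\textbf{What each route buys.} Yours avoids choosing a square submatrix, the ordering and source bookkeeping, and the combinatorial identity. It also treats any orientation of the tree and any positive $\lambda$ and tree weights uniformly. The paper's route instead yields an explicit nonvanishing coefficient of a concrete polynomial.
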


There are different notions in the literature of weak forms of identifiability.
See Chapter 16 by \cite{sullivant2018algebraic} for an overview.
A result such as Theorem \ref{thm::GenericIdResult} can be    
phrased as $\theta$ being \emph{generically} identifiable up to scaling. The 
term \emph{generic} means that the possible exception set, where $\theta$
cannot be identified, is negligible in some sense. In Theorem 
\ref{thm::GenericIdResult} the negligible set $\mathcal{N}_G$ is a proper 
algebraic subset, which means that it is a proper subset and the 
vanishing set of a collection of polynomials, i.e., a variety. This makes $\mathcal{N}_G$ a Lebesgue 
null set in the ambient Euclidean space, but more importantly,
since $\Theta_G$ is open, $\mathcal{N}_G$ is a strictly lower-dimensional 
set, it is closed and $\Theta_G \backslash \mathcal{N}_G$ is open and 
dense in $\Theta_G$. For these reasons, \eqref{eq::PrecisestatementaboutDimofFiber}
is regarded as holding for most points in $\Theta_G$, and it is reasonable 
to regard the exception set 
$\mathcal{N}_G$ as negligible. Using the conclusion of 
Theorem \ref{thm::GenericIdResult} as the definition of what we mean by generic 
identifiability, we get the following corollary.

\begin{corollary}
    \label{cor::genericIDMoffdiag}
    Let $d \geq 2$, $r \geq 3$ be integers and let $G = ([d], E)$ be any connected graph with all self-loops present. Then the drift matrix $M$ and the diagonal and non-zero cumulants $\mathcal{C}_2$ and $\mathcal{C}_r$ are jointly generically identifiable from 
    the cumulant tensors $(\Sigma, \mathcal{K} ) \in \mathcal{M}^{2,r}_G$ up to a common scaling. 
\end{corollary}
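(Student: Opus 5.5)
The corollary asks for a proof of Corollary~\ref{cor::genericIDMoffdiag}, which is essentially a restatement of Theorem~\ref{thm::GenericIdResult}, so the plan is to deduce it directly from the theorem.

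By Definition~\ref{dfn:cumulantmodel}, every pair $(\Sigma,\mathcal{K})\in\mathcal{M}^{2,r}_G$ is of the form $\varphi_{G,r}(\theta)$ for some $\theta=(M,\mathcal{C}_2,\mathcal{C}_r)\in\Theta_G$, since $\varphi_{G,r}$ is surjective. \emph{Joint generic identifiability up to a common scaling} is taken to mean the conclusion of Theorem~\ref{thm::GenericIdResult}. That is, outside a proper algebraic subset $\mathcal{N}_G\subset\Theta_G$, any $\theta'\in\Theta_G$ with $\varphi_{G,r}(\theta')=\varphi_{G,r}(\theta)$ satisfies $\theta'=c\theta$ for some $c>0$.

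First, I invoke Theorem~\ref{thm::GenericIdResult}. Its hypotheses ($d\ge 2$, $r\ge 3$, $G$ connected with all self-loops) are exactly those of the corollary, and it supplies $\mathcal{N}_G$.

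Second, for $\theta\notin\mathcal{N}_G$, I read off the fibre identity \eqref{eq::PrecisestatementaboutDimofFiber}. It says that the cumulants $(\Sigma,\mathcal{K})$ determine the triple $(M,\mathcal{C}_2,\mathcal{C}_r)$ uniquely up to one joint factor $c>0$. In particular, the ray $\{cM\mid c>0\}$ is determined, which gives the map \eqref{eq:idmap} restricted to the generic part of $\mathcal{P}^{2,r}_G$.

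Third, I check that the scaling really is common to all three components and is consistent with both Lyapunov equations. If $\varphi_{G,r}(\theta)=(\Sigma,\mathcal{K})$, then $(cM,c\mathcal{C}_2,c\mathcal{C}_r)$ also solves both equations, since each equation is linear in $(M,\mathcal{C}_k)$. Moreover, $c\theta\in\Theta_G$ for $c>0$: stability and sparsity are preserved, $c\mathcal{C}_2$ stays positive, and $c\mathcal{C}_r$ stays nonzero. So the fibre contains the whole ray and cannot be smaller. This confirms that "up to a common scaling" is the sharp statement, as already argued in Section~\ref{sec:lim}.

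Finally, I comment that the exception set is negligible. Because $\Theta_G$ is open and $\mathcal{N}_G$ is a proper algebraic subset, $\mathcal{N}_G$ is a Lebesgue null set with dense open complement. This is the meaning of "generic" adopted just before the corollary.

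There is no real obstacle here: all the difficulty lies in Theorem~\ref{thm::GenericIdResult} itself, namely the rank arguments on the reorganized Lyapunov systems. The one point needing care is terminological. One must make explicit that "generic identifiability" is defined via the fibre statement on the parameter space $\Theta_G$, rather than via the distributions in $\mathcal{P}^{2,r}_G$. With that convention the corollary follows immediately.
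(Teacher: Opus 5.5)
Your proposal is correct and matches the paper. The paper obtains Corollary~\ref{cor::genericIDMoffdiag} directly by taking the fibre identity \eqref{eq::PrecisestatementaboutDimofFiber} of Theorem~\ref{thm::GenericIdResult} as the definition of generic identifiability up to a common scaling. Your extra check that the whole ray $\{c\theta \mid c>0\}$ lies in the fibre is harmless, but it is already contained in that equality (and in Section~\ref{sec:lim}), so it is not needed.
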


\begin{remark} \label{rem::Selfloops}
    Theorem \ref{thm::GenericIdResult} and Corollary \ref{cor::genericIDMoffdiag} make two assumptions about the graph; that it is connected and that it contains all self-loops. Section \ref{sec:nonid}
    argues that the connectedness assumption is necessary. If the graph is acyclic, the drift matrix 
    can also only be stable if it contains all self-loops, and our proof relies on the existence of 
    an acyclic subgraph corresponding to a stable $M$. The assumption that the graph contains 
    all self-loops is harmless from a practical perspective as it is almost always 
    sensible to assume that a variable affects itself. However, it is of theoretical 
    interest to determine if the assumption can be relaxed. We have by computation found 
    examples of cyclic graphs for which Theorem \ref{thm::GenericIdResult} remains true for 
    $r = 3$ even though some self-loops are removed, see Figure \ref{fig::Idwithmissingwselfloops} for two examples on three nodes. 
\end{remark} 


Theorem \ref{thm::GenericIdResult} and Corollary \ref{cor::genericIDMoffdiag} do 
not explicitly answer to what extent the map \eqref{eq:idmap} exists. To do so 
we introduce the map 
\begin{align*}
    \pi_{2,r} : & \mathbb{R}^E_{\mathrm{stab}} \times \mathcal{Q}^{2, r} \to 
    \Theta_G \\
    & (M, Q_1) \mapsto (M, \mathrm{cum}_2(Q_1), \mathrm{cum}_r(Q_1)),
\end{align*}
which makes the diagram in Figure \ref{fig:comdia} commute. Under the 
assumptions of Theorem \ref{thm::GenericIdResult}, the map \eqref{eq:idmap} can then be defined on $\psi(\mathbb{R}^E_{\mathrm{stab}} \times \mathcal{Q}^{2, r} \backslash \pi_{2,r}^{-1}(\mathcal{N}_G))$ by composing $\varphi_{G, r}^{-1} \circ \mathrm{cum}_{2,r}$ with a projection onto the first coordinate. The extent to 
which $\pi_{2,r}^{-1}(\mathcal{N}_G)$ is negligible in 
$\mathbb{R}^E_{\mathrm{stab}} \times \mathcal{Q}^{2, r}$ is not entirely clear. 
We do, however, show Proposition \ref{prop::piSurjective} in 
Appendix \ref{sec:AuxResultsCompoundPoisson} stating that $\pi_{2,r}$ is 
surjective if $r$ is odd, and the image of $\pi_{2,r}$ is, at least, of 
full dimension if $r$ is even. Importantly, the image of $\pi_{2,r}$ in 
$\Theta_G$ is therefore not a lower dimensional set like $\mathcal{N}_G$, 
which justifies that generic identifiability in the sense of 
Theorem \ref{thm::GenericIdResult} and Corollary \ref{cor::genericIDMoffdiag}
is of relevance.

\begin{figure}
    \centering
\begin{tabular}{cp{1cm}c}
    \begin{tikzpicture}
      \node[circle, draw, minimum size=0.5cm] (1) at (1,1) {1};
      \node[circle, draw, minimum size=0.5cm] (2) at (0,0) {2};
      \node[circle, draw, minimum size=0.5cm] (3) at (2,0) {3};

      \draw[->,  >=stealth] (3) edge[loop below] (3);
      \draw[->,  >=stealth] (2) edge[loop below] (2);
      \draw[->,  >=stealth] (1) edge [bend left] (2);
      \draw[->,  >=stealth] (2) edge[bend left]  (1);
      \draw[->,  >=stealth] (2) edge[bend right]  (3);
    \end{tikzpicture} & &
   \begin{tikzpicture}
      \node[circle, draw, minimum size=0.5cm] (1) at (1,1) {1};
      \node[circle, draw, minimum size=0.5cm] (2) at (0,0) {2};
      \node[circle, draw, minimum size=0.5cm] (3) at (2,0) {3};

      \draw[->,  >=stealth] (3) edge[loop below] (3);
      \draw[->,  >=stealth] (1) edge [bend left] (2);
      \draw[->,  >=stealth] (2) edge[bend left]  (1);
      \draw[->,  >=stealth] (2) edge[bend right]  (3);
      \draw[->,  >=stealth] (3) edge[bend right]  (1);
    \end{tikzpicture}
\end{tabular}
    \caption{Two directed graphs on three nodes with one and two self-loops missing, respectively, for which $M$ is still identifiable in the sense of Theorem \ref{thm::GenericIdResult} for $r = 3$.}
    \label{fig::Idwithmissingwselfloops}
\end{figure}
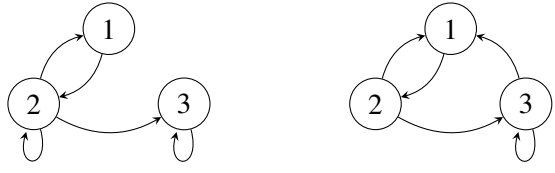

\begin{figure}
\centering 
\begin{tabular}{cp{1cm}c}
\begin{tikzcd}[column sep=large,row sep=large]
\mathbb{R}^E_{\mathrm{stab}} \times \mathcal{Q}^{2, r} \arrow[r, "\psi"] \arrow[d, "\pi_{2,r}"'] & 
\mathcal{P}^{2, r}_{G} \arrow[d, "\mathrm{cum}_{2,r}"] \\
\Theta_G \arrow[r,"\varphi_{G,r}"] & \mathcal{M}^{2,r}_G 
\end{tikzcd} & &
\begin{tikzcd}[column sep=large,row sep=large]
(M, Q_1) \arrow[r, "\psi"] \arrow[d, "\pi_{2,r}"'] & 
P \arrow[d, "\mathrm{cum}_{2,r}"] \\
(M, \mathcal{C}_2, \mathcal{C}_r) \arrow[r,"\varphi_{G,r}"] & 
(\Sigma, \mathcal{K})
\end{tikzcd}
\end{tabular}
\caption{Relations between model spaces and parametrizations as a commutative diagram.}
\label{fig:comdia}
\end{figure}

\subsection{Outline of the proof of Theorem \ref{thm::GenericIdResult} via reorganization of the Lyapunov equations}
\label{sec:proof-reorganize}

In this section we reorganize the second- and $r$-th-order cumulant equations from 
Proposition~\ref{prop::tensor-lyapunov}
so that they appear as a homogeneous system of linear equations in the vectorization of the parameters 
$M$, $\mathcal{C}_2$ and $\mathcal{C}_r$. For the vectorization we do not assume that 
$\mathcal{C}_2$ and $\mathcal{C}_r$ are diagonal. Since the (non-zero) parameter vector is in the kernel of the 
matrix that defines the linear equations, the conclusion of Theorem \ref{thm::GenericIdResult}, equation \eqref{eq::PrecisestatementaboutDimofFiber},
is true whenever this kernel is one-dimensional. This reduces our question about identifiability 
to a question about the rank of a matrix. To illustrate the idea behind the general proof,
Example \ref{ex:twonodes} gives the proof in the case of the complete graph on two nodes and with $r = 3$. 


In order to vectorize the cumulant equations we use a so called symmetric, or unique, vectorization 
operator. That is, it is a vectorization of only the unique entries of the symmetric covariance matrix and the symmetric $r$-th-order cumulant tensor. We denote the unique vectorization operator by $\mathrm{vec}_u$ and the ordinary vectorization operator by $\mathrm{vec}$.

Since the $k$-th order Lyapunov equation is linear in $M$, its unique vectorization can be written as 
\begin{equation}
    \label{eq::VecThirdOrder}
    A_k(\mathcal{K} ) \mathrm{vec}(M) + \mathrm{vec}_u(\mathcal{C}_k) = 0
\end{equation}
for some matrix $A_k(\mathcal{K} )$ depending on the $k$-th-order cumulant $\mathcal{K} $. 
To understand how $A_k(\mathcal{K})$ is organized, we first consider 
the case $k = 2$, where the equation is 
\begin{equation}
    \label{eq::VecSecondOrder}
    A_2(\Sigma) \mathrm{vec}(M) + \mathrm{vec}_u(\mathcal{C}_2) = 0
\end{equation}
with $A_2(\Sigma)$ a $(d(d+1)/2) \times d^2$ matrix. To write down the entries of 
this matrix, we index the second-order Lyapunov equation and 
the unique vectorization of $\mathcal{C}_2$ by $\{(i_1i _2) \; | \; i_1 \leq i_2 \}$. 
We index the columns of $A_2$ by the $d^2$ \emph{potential} edges $\alpha \rightarrow \beta$ of
a graph, corresponding to all possible entries of $M$. 
The expressions for the entries are then 
\begin{align}
\label{eq::A2SigmaEntries}
    A_2(\Sigma)_{(i_1 i_2), (\alpha \rightarrow \beta)} = \begin{cases}
        0 & \mathrm{ if } \; \beta  \neq i_1, i_2\\
        2 \Sigma_{i_1 i_2} & \mathrm{ if } \; \beta = i_1 = i_2\\
        \Sigma_{i_2 \alpha} & \mathrm{ if } \; \beta = i_1 \neq i_2\\
        \Sigma_{i_1 \alpha } & \mathrm{ if } \; \beta = i_2 \neq i_1, 
    \end{cases}
\end{align}
see also equation (4.3) by \cite{Dettling:2023}.

Similarly, the $k$-th-order Lyapunov equation and unique vectorization of $\mathcal{C}_k$ are indexed by $\{(i_1 \dots i_k) \; | \; i_1 \leq \cdots \leq i_k \}$. The matrix $A_k(\mathcal{K} )$ has dimensions 
$$
    \binom{d+(k-1)}{k} \times d^2.
$$
In general, the entry  
$A_k(\mathcal{K} )_{(\beta i_2 \dots i_k), (\alpha \rightarrow \beta)}$ is an integer multiple of $\mathcal{K} _{\alpha i_2 \dots i_k}$, and likewise for $\beta$ equal to one of the other indices. The remaining entries 
are zero. 
To determine the integer, one needs to count how many of the mode products in the equation 
will contribute with the same term, which will depend on how many of the $i$-indices are equal to $\beta$. Thus, again written in the case $\beta = i_1$, the general expression is given by
\begin{equation} \label{eq::generalAmatrixentry}
    A_k(\mathcal{K} )_{(i_1 \dots i_k), (\alpha \rightarrow \beta)} = N((i_1 \dots i_k), \beta) \cdot \mathcal{K} _{\alpha i_2 \dots i_k},
\end{equation}
where $N((i_1 \dots i_k), \beta)$ is the number of row indices $i_1 \leq \ldots \leq i_k$
equal to $\beta$.

Writing this out, we obtain a complete description of $A_3(\mathcal{K} )$ by
\begin{equation}
\label{eq::GeneralA3}
    A_3(\mathcal{K} )_{(i_1i_2i_3), (\alpha \rightarrow \beta)} = 
    \begin{cases}
         0 & \qquad  \beta \neq i_1, i_2, i_3 \\ 
         3 \mathcal{K} _{i_1 i_2 i_3}  & \qquad \beta = i_1 = i_2 = i_3 \\ 
         2\mathcal{K}_{i_1 \alpha i_3} & \qquad  \beta = i_1 = i_2 \neq i_3 \\ 
         2\mathcal{K} _{ \alpha i_2 i_3} &  \qquad \beta = i_2 = i_3 \neq i_1 \\ 
         \mathcal{K} _{i_1 i_2 \alpha } & \qquad \beta = i_3, i_2 \neq i_3\\
         \mathcal{K} _{\alpha i_2 i_3} & \qquad \beta = i_1, i_1 \neq i_2\\
         \mathcal{K} _{i_1 \alpha i_3} & \qquad \beta = i_2, i_2 \neq i_3, i_2 \neq i_1. \\ 
    \end{cases}
\end{equation}

    Again, letting $r \geq 3$ be a fixed integer, we can combine the second-order vectorized equation, \eqref{eq::VecSecondOrder} and \eqref{eq::VecThirdOrder} for $k = r$ to obtain the following linear system, still allowing $\mathcal{C}_2$ and $\mathcal{C}_r$ to be non-diagonal,
    \begin{align}
    \label{eq::BigLinSys}
        \begin{pmatrix}
            A_2(\Sigma) & I_{d(d+1)/2} & 0 \\ 
            A_r(\mathcal{K} ) & 0 & I_{\binom{d+(r-1)}{r}} 
        \end{pmatrix} 
        \begin{pmatrix}
            \mathrm{vec}(M) \\ 
            \mathrm{vec}_u(\mathcal{C}_2)\\
            \mathrm{vec}_u(\mathcal{C}_r)
        \end{pmatrix} = 0.
    \end{align}
   The question of identifiability is now reduced to considering the rank of the matrix above.  
   Without any restrictions on the cumulants $\mathcal{C}_2$ and $\mathcal{C}_r$ there are 
   as many unique entries in these as there are unique equations. Therefore, it is impossible to 
   establish identifiability if we allow $M$, $\mathcal{C}_2$ and $\mathcal{C}_r$ to be arbitrary. 
   This would not change by adding additional equations for other higher-order cumulants, since we would always be adding the same number of equations as unknown parameters. Therefore, certain constraints
   are necessary on the entries of $M$, $\mathcal{C}_2$ or $\mathcal{C}_r$. 
   
    For $\mathcal{C}_2$ and $\mathcal{C}_r$ diagonal, \eqref{eq::BigLinSys} reduces to 
    \begin{equation}
    \label{eq::ThmEquationSys0}
        \begin{pmatrix}
            A_2(\Sigma)_{\mathrm{off}} & 0 & 0 \\ 
            A_r(\mathcal{K} )_{\mathrm{off}} & 0 & 0 \\
            A_2(\Sigma)_{\mathrm{diag}} & I_{d} & 0 \\
            A_r(\mathcal{K} )_{\mathrm{diag}} & 0 & I_{d} \\ 
        \end{pmatrix} 
        \begin{pmatrix}
            \mathrm{vec}(M) \\
            \mathrm{diag}(\mathcal{C}_2 )\\
             \mathrm{diag}(\mathcal{C}_r)\\
        \end{pmatrix}
         = 0,
    \end{equation}
    where $A_2(\Sigma)_{\mathrm{off}}$ and $A_r(\mathcal{K} )_{\mathrm{off}}$ denote all the rows indexed by the off-diagonal tensor entries of $A_2(\Sigma)$ and $A_r(\mathcal{K} )$, respectively. That is, all the rows not indexed by $(ii)$ or $(i \dots i)$. Similarly, $A_2(\Sigma)_{\mathrm{diag}}$ and $A_r(\mathcal{K} )_{\mathrm{diag}}$ denote the rows indexed by the diagonal entries. In the proof of Theorem~\ref{thm::GenericIdResult}
    we establish that the upper left block of the matrix in \eqref{eq::ThmEquationSys0},
     \begin{equation}
        \label{eq:nd}
        A_{\mathrm{off}} = \begin{pmatrix}
            A_2(\Sigma)_{\mathrm{off}}\\ 
            A_r(\mathcal{K} )_{\mathrm{off}} 
        \end{pmatrix},
    \end{equation}
    generically has rank $d^2 - 1$. This shows the generic identifiability of $M$ up to scaling, 
    and from the remaining equations in \eqref{eq::ThmEquationSys0}, the diagonal cumulants $\mathcal{C}_2$
    and $\mathcal{C}_r$ are also identified up to scaling. 

    To prove that $A_{\mathrm{off}}$ generically has rank $d^2-1$ it is enough to exhibit one choice of \mbox{$\theta = (M, \mathcal{C}_2, \mathcal{C}_3) \in \Theta_G$} such that $A_{\mathrm{off}}$ has rank $d^2-1$. This is because the entries of the cumulants, $\Sigma$ and $\mathcal{K}$, can be written as rational functions in the parameters $M$, $\mathcal{C}_2$ and $\mathcal{C}_r$, since the continuous Lyapunov equations are also linear in the respective cumulants, see Proposition \ref{prop::VecOfEquation} in Appendix \ref{sec:lyapunov-appendix}. Alternatively, this can also be seen using the general trek rule in Appendix \ref{Appendix::treks}. Thus, any determinant of a submatrix of $A$ will also be a rational function in $M$, $\mathcal{C}_2$ and $\mathcal{C}_r$. Therefore, the determinant of a submatrix of $A$ being non-zero corresponds to a polynomial in $M$, $\mathcal{C}_2$ and $\mathcal{C}_r$ being non-zero, and to show that a polynomial is not the zero polynomial it is enough to show that it is non-zero in a single point.

\begin{example} 
    \label{ex:twonodes}
    To illustrate by example how we prove Theorem \ref{thm::GenericIdResult}, 
    we consider the smallest non-simple graph, the complete graph on $d = 2$ nodes, see Figure \ref{fig::TwoCycle}, and 
    prove that \eqref{eq:nd} for $r = 3$ generically has rank $d^2 -1 = 3$. For 
    diagonal $\mathcal{C}_2$ and $\mathcal{C}_3$, the equation system in equation 
    \eqref{eq::BigLinSys} can be written as
\begin{equation*}
    \begin{pNiceArray}{cccc|cccc}[first-row,first-col, nullify-dots]
       & 1 \rightarrow 1    & 1 \rightarrow 2 & 2 \rightarrow 1       & 2 \rightarrow 2 & (\mathcal{C}_2)_{11} & (\mathcal{C}_2)_{22} &  (\mathcal{C}_3)_{111} & (\mathcal{C}_3)_{222}  \\
(11)    & 2 \Sigma_{11} & 0 & 2 \Sigma_{12}  & 0 & 1 & 0 & 0 & 0   \\
(12)  & \Sigma_{12} & \Sigma_{11} & \Sigma_{22} & \Sigma_{12} & 0  & 0  & 0 & 0  \\
(22)     & 0 &  2 \Sigma_{12} & 0 & 2 \Sigma_{22} & 0  & 1 & 0  & 0       \\
(111)      & 3 \mathcal{K} _{111} & 0  & 3 \mathcal{K} _{112} & 0 & 0  & 0  & 1 & 0       \\
(112)      & 2 \mathcal{K} _{112} & \mathcal{K} _{111} & 2 \mathcal{K} _{122} & \mathcal{K} _{112} & 0 & 0 & 0  & 0       \\
(122)      & \mathcal{K} _{122} & 2 \mathcal{K} _{112} & \mathcal{K} _{222} & 2\mathcal{K} _{122}  & 0 & 0 & 0 & 0       \\
(222)      & 0 & 3 \mathcal{K} _{122} & 0 & 3\mathcal{K} _{222} & 0 & 0  & 0  & 1       \\
\end{pNiceArray}
 \begin{pmatrix}
    M_{11} \\
    M_{21} \\ 
    M_{12} \\ 
    M_{22} \\ 
    (\mathcal{C}_2)_{11} \\
    (\mathcal{C}_2)_{22} \\
    (\mathcal{C}_3)_{111} \\
    (\mathcal{C}_3)_{222} \\
\end{pmatrix} = 0. 
\end{equation*}
Because of the sparsity structure exhibited by the $\mathcal{C}$ columns, the matrix has full rank minus one if the submatrix with row indices corresponding to three off-diagonal entries of the tensors has rank three. These are rows two, five and six. Removing the four $\mathcal{C}$ columns as well, we 
arrive at the $3 \times 4$ matrix 

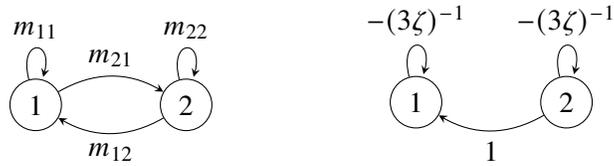
\begin{figure}
    \centering
\begin{tabular}{cp{1cm}c}
    \begin{tikzpicture}
      \node[circle, draw, minimum size=0.5cm] (1) at (0,0) {1};
      \node[circle, draw, minimum size=0.5cm] (2) at (2,0) {2};

      \draw[->,  >=stealth] (1) edge[loop above] node[midway, above] {$m_{11}$} (1);
      \draw[->,  >=stealth] (2) edge[loop above] node[midway, above] {$m_{22}$} (2);
      \draw[->,  >=stealth] (1) edge [bend left] node[midway, above] {$m_{21}$} (2);
      \draw[->,  >=stealth] (2) edge[bend left]  node[midway, below] {$m_{12}$} (1);
    \end{tikzpicture} & &
    \begin{tikzpicture}
      \node[circle, draw, minimum size=0.5cm] (1) at (0,0) {1};
      \node[circle, draw, minimum size=0.5cm] (2) at (2,0) {2};
      
      \draw[->,  >=stealth] (1) edge[loop above] node[midway, above] {$-(3\zeta)^{-1}$} (1);
      \draw[->,  >=stealth] (2) edge[loop above] node[midway, above] {$-(3\zeta)^{-1}$} (2);
      \draw[->,  >=stealth] (2) edge [bend left] node[midway, below] {$1$} (1);
    \end{tikzpicture} 
\end{tabular}
    \caption{The complete graph on two nodes and four edges corresponding to the parameters in $M$ (left).
    Example \ref{ex:twonodes} shows identifiability of $M$ up to scaling by computing the rank of $A_{\mathrm{off}}$ for a particular choice of parameters (right).}
    \label{fig::TwoCycle}
\end{figure}

\begin{equation*}
    \begin{pNiceArray}{cccc}[first-row,first-col, nullify-dots]
       & 1 \rightarrow 1    & 1 \rightarrow 2 & 2 \rightarrow 1       & 2 \rightarrow 2  \\
(12)  & \Sigma_{12} & \Sigma_{11} & \Sigma_{22} & \Sigma_{12} \\
(112)      & 2 \mathcal{K} _{112} & \mathcal{K} _{111} & 2 \mathcal{K} _{122} & \mathcal{K} _{112}  \\
(122)      & \mathcal{K} _{122} & 2 \mathcal{K} _{112} & \mathcal{K} _{222} & 2\mathcal{K} _{122}  
\end{pNiceArray} = A_{\mathrm{off}},
\end{equation*}
which we need to show has rank 3 generically. 

As argued right before the example, to prove this generic rank property we only need to exhibit one choice of parameters where the matrix has rank 3. We use the specialized trek rule in Corollary \ref{cor::simplifiedTrekRuleDAGs} and let
\begin{equation*}
    M = \begin{pmatrix}
    -1/(3 \zeta) & 1 \\ 
    0 & -1/(3 \zeta)
\end{pmatrix},
\end{equation*}
 see Figure \ref{fig::TwoCycle}. We take $\mathcal{C}_2$ and $\mathcal{C}_3$ to be the identity matrix and tensor, respectively. By Corollary \ref{cor::simplifiedTrekRuleDAGs}  we obtain the following expression for the entries of $\Sigma$ and $\mathcal{K} $

 \begin{minipage}{0.45\textwidth}
 \begin{align*}
    \Sigma_{11} & = \frac{3}{2} \zeta  & \\
    \Sigma_{12} & = \left(\frac{3}{2} \right)^2 \zeta^2 \quad  & 
    \Sigma_{22} = \frac{3}{2} \zeta + 2 \left(\frac{3}{2} \right)^3 \zeta^3 \\
 \end{align*} 
 \end{minipage}
 \hfill
 \begin{minipage}{0.45\textwidth}
 \begin{align*}
 \mathcal{K} _{111} & = \zeta & \quad \mathcal{K} _{122} & = 2 \zeta^3 \\[2mm]
 \mathcal{K} _{112} & = \zeta^2 & \quad \mathcal{K} _{222} & = \zeta + 6 \zeta^4. \\
 \end{align*}
 \end{minipage}


Plugging these into the matrix $A_{\mathrm{off}}$, its determinant becomes
$3/2 \zeta^7 + 3/4 \zeta ^4$. This is not the zero polynomial, so generically the determinant does not vanish. Thus, the parameters are generically identifiable up to a joint scaling. 

In this small 
example, the argument would work by picking just one value of $\zeta$, for example, $\zeta = 1$. However, in the full proof the number of terms in the general $\zeta$-polynomial grows rapidly, and the signs of some of the terms will be opposite. For that reason, the strategy for proving that the determinant is non-zero in general is to prove that the coefficient of the lowest degree term in $\zeta$ is non-zero.

\end{example}

\subsection{Identification when $G$ is not connected}
\label{sec:nonid}

Theorem \ref{thm::GenericIdResult} gives rise to two natural questions: is the conclusion true if 
the graph is not connected; and is the exception set $\mathcal{N}_G$ actually non-empty? We show 
the following corollary that answers both questions with the proof provided in Appendix \ref{Appendix::MainresultLemmas}. 

\begin{corollary}
    \label{cor::RankConnectedComponents}
    Let $d \geq 2$, $r \geq 3$ be integers and let $G_0 = ([d], E_0)$ be a graph with $m$ connected components and all self-loops present. Then for $(\Sigma, \mathcal{K} ) \in \mathcal{M}^{2,r}_{G_0}$,
    \begin{equation}
        A_{\mathrm{off}} = \begin{pmatrix}
            A_2(\Sigma)_{\mathrm{off}}\\ 
            A_r(\mathcal{K} )_{\mathrm{off}} 
        \end{pmatrix}
    \end{equation}
   has rank at most $d^2 - m$, and generically in $\Theta_{G_0}$ it has rank $d^2 - m$.
\end{corollary}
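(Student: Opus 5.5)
The plan has two halves: an upper bound from an explicit $m$-dimensional kernel, and a matching lower bound from Theorem~\ref{thm::GenericIdResult} applied to each component.

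\textbf{Upper bound.} Let $V_1,\dots,V_m$ be the vertex sets of the connected components of $G_0$. For $M\in\mathbb{R}^{E_0}_{\mathrm{stab}}$, after permuting coordinates $M$ is block diagonal with stable blocks $M^{(l)}$ on $V_l$. Since $\mathcal{C}_2$ and $\mathcal{C}_r$ are diagonal, uniqueness of the solution to \eqref{eq:k-lyapunov} (Proposition~\ref{prop::XuResult}) shows that $\Sigma$ and $\mathcal{K}$ vanish on every entry whose indices are not all in one $V_l$. Indeed, the tensor equal to the componentwise solutions on each $V_l$, and zero elsewhere, solves the equation.

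Define $M^{[l]}$ as $M$ with all blocks except the $l$-th set to zero. The claim is that each $M^{[l]}$ lies in the kernel of $A_{\mathrm{off}}$. Fix an off-diagonal row index $(i_1\dots i_k)$.
\begin{itemize}
\item If all indices lie in a single $V_{l'}$, the row of $A_k\,\mathrm{vec}(M^{[l]})$ equals the corresponding entry of $\sum_j \mathcal{K}\times_j M^{[l]}$. For $l'\neq l$ this is $0$. For $l'=l$ it equals the entry of $\sum_j\mathcal{K}\times_jM$, which is $-(\mathcal{C}_k)_{i_1\dots i_k}=0$ because the index is off-diagonal.
\item If the indices span several components, each term $\mathcal{K}_{\dots\alpha\dots}M^{[l]}_{\beta\alpha}$ requires $\alpha,\beta\in V_l$. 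The remaining indices then include one outside $V_l$, so the $\mathcal{K}$ factor vanishes.
\end{itemize}
The $m$ vectors $\mathrm{vec}(M^{[l]})$ have disjoint supports and are nonzero, since the diagonal entries of a stable matrix can't all vanish. Hence they are linearly independent and the rank is at most $d^2-m$.

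\textbf{Lower bound.} By the argument preceding Example~\ref{ex:twonodes}, the rank of $A_{\mathrm{off}}$ is a lower semicontinuous function whose generic value is attained off a proper algebraic subset. So it suffices to exhibit one $\theta\in\Theta_{G_0}$ with rank exactly $d^2-m$. Choose $\theta$ block diagonal, and on each component choose $\theta^{(l)}$ outside the exceptional set $\mathcal{N}_{G_l}$ of Theorem~\ref{thm::GenericIdResult}. Single-node components need separate treatment; there the claim is just that the relevant submatrix has rank $0=1^2-1$.

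At such a point, order the columns of $A_{\mathrm{off}}$ as edges $\alpha\to\beta$ within components versus across components. Across-component columns are handled as follows.
\begin{itemize}
\item Take $\alpha\in V_a$ and $\beta\in V_b$ with $a\neq b$. The second-order row $(\beta\,\alpha)$ has entry $\Sigma_{\alpha\alpha}\neq 0$ in column $\alpha\to\beta$.
\item That row also meets only columns $\gamma\to\beta$ with $\gamma\in V_a$, via $\Sigma_{\alpha\gamma}$, and columns $\gamma\to\alpha$ with $\gamma\in V_b$, via $\Sigma_{\beta\gamma}$.
\end{itemize}
This couples pairs of cross columns, namely $\alpha\to\beta$ and its reverse family. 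I would handle it by showing that the cross rows between $V_a$ and $V_b$ involve only cross columns between $V_a$ and $V_b$. Within-component rows involve only within-component columns, since cross entries of $\Sigma$ and $\mathcal{K}$ vanish. So $A_{\mathrm{off}}$ is block diagonal with blocks of three kinds.
\begin{itemize}
\item The block $A^{(l)}_{\mathrm{off}}$ for each component has rank $|V_l|^2-1$ by Theorem~\ref{thm::GenericIdResult}, through the rank statement established in its proof.
\item For each pair $a<b$ there is a block with $2|V_a||V_b|$ columns. Its rows include the second-order rows $(\beta\alpha)$, giving $|V_a||V_b|$ of them, together with the third- and higher-order rows $(\alpha\alpha'\beta)$, $(\alpha\beta\beta')$, and so on.
\end{itemize}

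\textbf{Main obstacle.} The hardest step is showing that each cross block has full column rank $2|V_a||V_b|$. The second-order rows alone give the $|V_a||V_b|\times 2|V_a||V_b|$ matrix $[\Sigma^{(a)}\otimes I,\ I\otimes\Sigma^{(b)}]$, roughly, and this is not enough. The natural remedy is to add the $r$-th-order rows with index pattern $(\alpha,\dots,\alpha,\beta)$ and $(\alpha,\beta,\dots,\beta)$. I would first try choosing each component $M^{(l)}$ to be a nilpotent-plus-scalar matrix as in Example~\ref{ex:twonodes}, with a $\zeta$ scaling, and verify via the trek rule (Corollary~\ref{cor::simplifiedTrekRuleDAGs}) that the lowest-order term in $\zeta$ of a suitable maximal minor is nonzero. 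At leading order $\Sigma\approx\mathrm{diag}$ and $\mathcal{K}\approx\mathrm{diag}$. Then the second-order row $(\beta\alpha)$ picks out column $\alpha\to\beta$ with weight $\Sigma_{\alpha\alpha}$ and column $\beta\to\alpha$ with weight $\Sigma_{\beta\beta}$, while the $r$-th-order row $(\alpha^{r-1}\beta)$ picks the same two columns with weights $\mathcal{K}_{\alpha\dots\alpha}$ and $(r-1)\mathcal{K}_{\alpha\dots\alpha\beta}$-type terms. The resulting $2\times2$ determinants are nonzero for generic choices, such as different ratios of $\mathcal{C}_2$ to $\mathcal{C}_r$ across nodes. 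This gives total rank $\sum_l(|V_l|^2-1)+2\sum_{a<b}|V_a||V_b|=d^2-m$.
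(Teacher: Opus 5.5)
Your proposal is correct and follows essentially the paper's route. You split $A_{\mathrm{off}}$ into within-component blocks, whose rank $|V_l|^2-1$ comes from the proof of Theorem~\ref{thm::GenericIdResult}, and a cross-component block shown to have full column rank at a point where $\Sigma$ and $\mathcal{K}$ are (nearly) diagonal; your explicit kernel vectors $\mathrm{vec}(M^{[l]})$ make the paper's upper bound precise.

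The paper handles your ``main obstacle'' more simply: it takes $M$ diagonal and uses only the $r$-th-order rows $(t\dots t s)$ with $t,s$ in different components, which give an exactly diagonal square submatrix with entries $\mathcal{K}_{t\dots t}\neq 0$. In your version, the entry $(r-1)\mathcal{K}_{\alpha\dots\alpha\beta\beta}$ of row $(\alpha\dots\alpha\beta)$ in column $\beta\to\alpha$ vanishes identically, being a cross-component cumulant. So each of your $2\times 2$ blocks has determinant $-\Sigma_{\beta\beta}\mathcal{K}_{\alpha\dots\alpha}\neq 0$, and no condition on ratios of $\mathcal{C}_2$ to $\mathcal{C}_r$ is needed.
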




If a graph $G_0$ has $m \geq 2$ connected components, we can, of course,  
apply Theorem \ref{thm::GenericIdResult} for each connected component separately, 
and conclude that the parameters are generically identifiable up to component-dependent 
scaling factors. Corollary \ref{cor::RankConnectedComponents} shows that this is the 
best we can hope for, since the rank deficiency of $A_{\mathrm{off}}$ is at least the 
number of connectivity components $m$. This result is intuitively reasonable since 
each connectivity component corresponds to a subsystem of the $d$-dimensional stochastic process,
and these subsystems do not interact. Therefore, the processes can evolve
at different speeds independently of each other, and we cannot determine how fast
any of them move around from cross-sectional observations alone. 

It also follows from Corollary \ref{cor::RankConnectedComponents} that 
if $G$ is connected then $\mathcal{N}_G \neq \emptyset$. To see this, suppose that 
$$\varphi_{G,r}(\theta) = (\Sigma, \mathcal{K} ) \in \mathcal{M}^{2,r}_{G_0} \subseteq \mathcal{M}^{2,r}_{G}$$ 
for $G_0$ a subgraph of $G$ having at least two connectivity components. Then the 
corresponding $A_{\mathrm{off}}$ matrix has rank at most $d^2 - 2$ by Corollary \ref{cor::RankConnectedComponents}. This implies that $\varphi^{-1}_{G, r}(\varphi_{G,r}(\theta))$
is at least two-dimensional, and we conclude that $\varphi^{-1}_{G, r}(\mathcal{M}^{2,r}_{G_0}) \subseteq \mathcal{N}_G$. Since $\varphi^{-1}_{G, r}$ is surjective, $\mathcal{N}_G$ is non-empty. This 
shows that $\theta$ can never be globally identified up to scaling. It remains an open 
question whether $\mathcal{N}_G$ contains parameters not in 
$\varphi^{-1}_{G, r}(\mathcal{M}^{2,r}_{G_0})$ for some disconnected subgraph $G_0$. 
That is, whether there exist exceptional choices of parameters where identifiability up to scaling fails without disconnecting the graph.



\section{Estimation}
\label{sec::Estimation}

Our identification results are based on the linear equation \eqref{eq::ThmEquationSys0}. 
In this section we turn such an identification equation into an estimator 
of the drift parameter $M$. To this end, suppose $A$ is any $b \times d^2$ matrix, 
depending linearly on the cumulants, such that 
\begin{equation}
    \label{eq:id}
    \ker(A) = \mathrm{span}(\mathrm{vec}(M)).
\end{equation}
By Theorem \ref{thm::GenericIdResult} and its proof we know that \eqref{eq:id} holds for 
$\theta = (M, \mathcal{C}_2, \mathcal{C}_r) \in \Theta_G \backslash \mathcal{N}_G$ when
\begin{equation} \label{eq:standardA}
       A =
       \begin{pmatrix}
            A_2(\Sigma)_{\mathrm{off}}\\ 
            A_r(\mathcal{K} )_{\mathrm{off}} 
    \end{pmatrix}
\end{equation}
for $(\Sigma, \mathcal{K} ) = \varphi_{G, r}(\theta)$. This particular $A$-matrix has
$$
    b = \frac{d(d-1)}{2} + \binom{d+(r-1)}{r} - d \geq d^2 - 1 
$$
rows. 

For a given connected graph $G$, it follows from the proof of 
Theorem \ref{thm::GenericIdResult} that \eqref{eq:id} holds if we choose $A$ as a 
certain submatrix of \eqref{eq:standardA} containing only $d^2 - 1$ of these rows. 
On the other hand, we can also 
include additional rows from $A_{k}(\mathcal{K} )_{\mathrm{off}}$ for $k \not \in \{2, r\}$ without violating 
\eqref{eq:id}. The choice of which rows to include involves a tradeoff between computational 
complexity and statistical efficiency. For estimation purposes with a known $G$ 
we would usually also drop all columns of $A$ 
that correspond to entries of $M$ that are zero according to $G$. This would reduce $A$ to a 
$b \times |E|$ matrix. To keep the notation simple, we will here only treat estimation of 
the entire matrix $M$ corresponding to the complete graph, with obvious modifications for any 
choice of subgraph. We will, however, allow for any choice of $A$ satisfying \eqref{eq:id}.

We will from hereon assume that $M$ is normalized, that is, $\| M \|_F = 1$ where $\| \cdot \|_F$ 
denotes the Frobenius norm. 
For any specific choice of $A$ satisfying \eqref{eq:id} we define an estimator 
of $M$, which we show is consistent and asymptotically normal as the sample size tends to infinity. 
In practice we may choose $A$ given by \eqref{eq:standardA} 
with $r = 3$ or $r = 4$, or possibly a combination. Including more rows from higher order 
cumulants is computationally more costly but could potentially make the estimator more 
efficient. We report on this tradeoff in Section \ref{sec:numerical_experiments}.

\begin{figure}
\centering
\begin{tikzpicture}[scale=2,>=Latex]
	\draw[->] (-1.5,0) -- (1.5,0) node[below right] {};
	\draw[->] (0,-1.5) -- (0,1.5) node[above left] {};

	\draw[gray] (0,0) circle (1);

	\path[name path=kernel] (-1.3,-0.7) -- (1.3,0.7);
	\draw[thick,blue] (-1.3,-0.7) -- (1.3,0.7);
	\node[blue,below right] at (1.3, 0.7) {$\ker(A)$};

	\path[name path=svmin] (-1.0,-1.2) -- (1.0,1.2);
	\draw[thick,red,dashed] (-1.0,-1.2) -- (1.0,1.2);
	\node[red,above right] at (0.7,1.2) {$\mathrm{span}(v_{\min}(\hat{A}))$};

	\path[name path=unitcircle] (0,0) circle (1);
	\path[name intersections={of=kernel and unitcircle, by={k1,k2}}];
	\path[name intersections={of=svmin and unitcircle, by={s1,s2}}];
	\fill[blue] (k1) circle (1.2pt);
	\fill[red] (s1) circle (1.2pt);

    \node at (0.6,1) {$\hat{M}$};
    \node at (1.1,0.4) {$M$};

	\node[gray] at (0.85,0.7) {$\theta$};
\end{tikzpicture}
    \caption{With $v_{\min}(\hat{A})$ a right singular vector of $\hat{A}$ corresponding to its smallest singular value, the estimator $\hat{M}$ is a unit vector in $\mathrm{span}(v_{\min}(\hat{A}))$.   
    \label{fig:estimator}}
\end{figure}
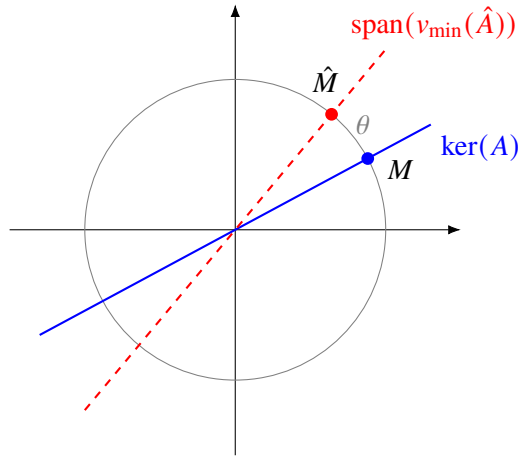

\begin{definition} \label{dfn:lsve}
Suppose $\hat{A}$ is an estimator of the $b \times d^2$ matrix $A$ with 
a unique smallest singular value. Then the least singular value estimator of $M$ is defined by 
\begin{equation}
    \label{eq:lsve}
    \mathrm{vec}(\hat{M}) = v_{\min}(\hat{A}),
\end{equation}
where $v_{\min}(\hat{A})$ denotes the right singular vector for $\hat{A}$ corresponding to its 
smallest singular value and such that $\mathrm{tr}(\hat{M}) < 0$.
\end{definition}

It is natural to think of $\mathrm{span}(v_{\min}(\hat{A}))$ as an estimator of $\ker(A)$, see 
Figure \ref{fig:estimator}. Since the right singular vector $v_{\min}(\hat{A})$ is only determined 
up to a sign, we fix its sign in Definition \ref{dfn:lsve} by requiring that $\mathrm{tr}(\hat{M}) < 0$.
This is a sensible convention since $M$ is stable with $\mathrm{tr}(M) < 0$, but it does not 
guarantee that $\hat{M}$ is stable. 

We may also observe that 
\begin{equation}
    \label{eq:lsvealt}
    \mathrm{vec}(\hat{M}) = \argmin_{v: \|v\| = 1}  \| \hat{A} v \|^2
\end{equation}
with the same convention as in Definition \ref{dfn:lsve} for fixing the sign of the minimizer.
Thus, $v_{\min}(\hat{A})$ is also an eigenvector of $\hat{A}^T \hat{A}$ corresponding to 
its smallest eigenvalue. Since the eigenvalues of a matrix are roots of the characteristic 
polynomial, the estimator is well-defined generically since the matrix $\hat{A}^T \hat{A}$ will generically have a unique smallest eigenvalue. However, least singular value 
estimation can result in a non-stable $\hat{M}$ since stability is not imposed as a 
constraint in the minimization problem \eqref{eq:lsvealt}.  

All the estimators of $A$ that we consider are linear maps of the form 
\begin{equation}
    \mathrm{vec}(\hat{A}) = \mathcal{A} \hat{\kappa}  
\end{equation}
where $\mathcal{A}$ is a fixed matrix with $b \cdot d^2$ rows, and $\hat{\kappa}$ denotes 
a unique vectorization of the empirical cumulants entering into $A$. The matrix 
$\mathcal{A}$ has integer entries and is generally extremely sparse.

Whenever the $M$-selfdecomposable distribution has finite $2k$-th-order moment, 
\begin{equation} \label{eq::AsymptoticNormalityCumulants}
    \sqrt{n}(\hat{\kappa}_n - \kappa) \xrightarrow{D} \mathcal{N} \left(0, \Omega \right)
\end{equation}
for some covariance matrix $\Omega$ by the central limit theorem. Therefore, we obtain the following theorem giving the asymptotics of the estimator proven in Appendix \ref{sec::Proofsabouttheest}. 

\begin{theorem} \label{thm::AsympoticsSingularValueEst}
Let $X_1, \ldots, X_n$ be an i.i.d. sample from a distribution in $\mathcal{P}_G$
with finite $k$-th-order moment, and suppose that $A$, whose entries are cumulants of order at most $k$, satisfies \eqref{eq:id}. Let $\hat{A}_n$ denote the plug-in estimator of $A$ based on the empirical cumulants and let $\hat{M}_n$ be the least singular value estimator 
given by \eqref{eq:lsve} in terms of $\hat{A}_n$. 
Then 
\begin{equation}
    \hat{A}_n \xrightarrow{P} A, \qquad \hat{M}_n \xrightarrow{P} M
\end{equation}
for $n \to \infty$. If, additionally, the distribution has finite $2k$-th-order moment, 
\begin{align*}
    \sqrt{n}(\mathrm{vec}(\hat{A}_n - A)) & \xrightarrow{D} \mathcal{N} \left(0, \mathcal{A} \Omega \mathcal{A}^T \right) \\
    \sqrt{n}(\mathrm{vec}(\hat{M}_n - M)) & \xrightarrow{D} \mathcal{N} 
    \left(0, \left(\mathrm{vec}(M)^T \otimes A^{+} \right) \mathcal{A} \Omega \mathcal{A}^T \left( \mathrm{vec}(M) \otimes (A^{+})^T\right) \right)
\end{align*}
for $n \to \infty$, where $A^{+}$ denotes the Moore-Penrose inverse of $A$.
\end{theorem}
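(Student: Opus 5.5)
Consistency comes first. Since $\mathrm{vec}(\hat{A}_n) = \mathcal{A}\hat{\kappa}_n$ is linear in the empirical cumulants, and empirical cumulants are continuous functions of empirical moments of order at most $k$, the weak law of large numbers and the continuous mapping theorem give $\hat{\kappa}_n \xrightarrow{P} \kappa$. Hence $\hat{A}_n \xrightarrow{P} A$. For $\hat{M}_n$ we use \eqref{eq:id}: $A^TA$ has a one-dimensional kernel spanned by $\mathrm{vec}(M)$, so its smallest eigenvalue $0$ is simple and separated from the rest of the spectrum by a gap $\delta > 0$. By Weyl's inequality, with probability tending to one $\hat{A}_n^T\hat{A}_n$ also has a unique smallest eigenvalue. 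The Davis--Kahan $\sin\theta$ theorem then gives $\min_{\pm}\|v_{\min}(\hat{A}_n) \mp \mathrm{vec}(M)\| \to 0$ in probability. The sign convention $\mathrm{tr}(\hat{M}_n) < 0$, together with $\mathrm{tr}(M) < 0$ and continuity of the trace, selects the $+$ sign with probability tending to one. This yields $\hat{M}_n \xrightarrow{P} M$.

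For asymptotic normality under finite $2k$-th moments, \eqref{eq::AsymptoticNormalityCumulants} combined with linearity gives the first display directly. For $\hat{M}_n$ we apply the delta method to the map $B \mapsto v_{\min}(B)$ near $A$. We will show that it is differentiable at $A$ with derivative
$$
\mathrm{d}v = -A^{+}(\mathrm{d}B)\,\mathrm{vec}(M),
$$
expressed in the normalisation $\|v\|=1$, $v^T\mathrm{vec}(M)>0$. To derive this, write $v = v(B)$ and $\lambda = \lambda(B)$ for the smallest eigenpair of $B^TB$. The pair is analytic near $A$ because the eigenvalue is simple (standard perturbation theory). We differentiate $B^TBv = \lambda v$ at $B = A$, where $\lambda = 0$ and $Av_0 = 0$ with $v_0 = \mathrm{vec}(M)$. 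This gives
$$
A^TA\,\mathrm{d}v + A^T\mathrm{d}B\,v_0 = \mathrm{d}\lambda\, v_0 .
$$
Projecting onto $v_0$ yields $\mathrm{d}\lambda = 0$, since $Av_0=0$. The constraint $\|v\|=1$ forces $\mathrm{d}v \perp v_0$. So $\mathrm{d}v = -(A^TA)^{+}A^T\mathrm{d}B\,v_0 = -A^{+}\mathrm{d}B\,v_0$, using $(A^TA)^{+}A^T = A^{+}$.

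Vectorising with $\mathrm{vec}(XYZ) = (Z^T\otimes X)\mathrm{vec}(Y)$ gives
$$
\mathrm{d}v = -(\mathrm{vec}(M)^T \otimes A^{+})\,\mathrm{vec}(\mathrm{d}B).
$$
The delta method, together with the sign selection from the first paragraph (which holds with probability tending to one and so does not affect the limit), then gives the stated covariance
$$
(\mathrm{vec}(M)^T\otimes A^{+})\,\mathcal{A}\Omega\mathcal{A}^T\,(\mathrm{vec}(M)\otimes (A^{+})^T).
$$

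The main obstacle is justifying differentiability of $v_{\min}$, which requires the simple smallest eigenvalue and a careful treatment of the sign ambiguity. I would handle this by invoking analytic perturbation theory for a simple eigenvalue of a symmetric matrix (Kato), restricted to a neighbourhood of $A$ on which the gap persists. Within that neighbourhood the trace condition fixes the branch continuously.
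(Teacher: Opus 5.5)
Your proposal is correct and follows essentially the paper's route: the law of large numbers and CLT for the empirical cumulants, then the continuous mapping theorem and delta method applied to the smallest-eigenvector map of $B^TB$. That map is smooth near $A$ because the zero eigenvalue is simple, and its Jacobian is $-\mathrm{vec}(M)^T\otimes A^{+}$. The differences are cosmetic. You get consistency from Weyl and Davis--Kahan rather than directly from continuity of that map. You also derive the Jacobian by implicitly differentiating $B^TBv=\lambda v$, whereas the paper composes Magnus's eigenvector derivative $-\mathrm{vec}(M)^T\otimes(A^TA)^{+}$ with the derivative of $X\mapsto X^TX$ and uses $A\,\mathrm{vec}(M)=0$ to kill the commutation-matrix term.
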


To use these results in practice, we need to estimate the asymptotic covariance matrix, and for 
this we can exploit the explicit formula above to modularize the estimation. The matrix 
$\Omega$ can be estimated separately based on the empirical cumulants only. 

The following corollary, proved in Appendix \ref{sec::Proofsabouttheest}, provides 
a compact way of representing the asymptotic covariance matrix, which avoids 
explicit usage of the large matrices $\mathrm{vec}(M)^T \otimes A^{+}$ and $\mathcal{A}$. The representation uses the matrices $B_k(M)$, which are defined by equation \eqref{eq::Br(M)vec} 
in Appendix \ref{sec:lyapunov-appendix} and enter in the unique vectorizations of the 
Lyapunov equations when written as linear equations in the cumulants instead of in $M$. 

\begin{corollary} \label{cor::RewriteAsymptoticCov}
    If $A$ is given by equation \eqref{eq:standardA}, the asymptotic covariance matrix of $\mathrm{vec}(\hat{M}_n)$ can be written as
    \begin{equation*}
        (A^{+} B_{2,r}(M)) \Omega (A^{+} B_{2,r}(M))^{T},
    \end{equation*}
    where $B_{2,r}(M)$ is the block diagonal matrix with $B_2(M)$ and $B_r(M)$ on the diagonal.
\end{corollary}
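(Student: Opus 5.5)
The plan is to start from the asymptotic covariance in Theorem \ref{thm::AsympoticsSingularValueEst},
\[
\bigl(\mathrm{vec}(M)^T \otimes A^{+}\bigr)\, \mathcal{A}\, \Omega\, \mathcal{A}^T \bigl(\mathrm{vec}(M) \otimes (A^{+})^T\bigr),
\]
and show that
\[
\bigl(\mathrm{vec}(M)^T \otimes A^{+}\bigr)\mathcal{A} = \pm A^{+} B_{2,r}(M).
\]
The overall sign is irrelevant, since it cancels in the quadratic form. The argument uses only linearity together with the two ways of vectorizing the Lyapunov equations: as equations linear in $M$ with coefficient matrix $A_k(\mathcal{K})$, and as equations linear in the cumulant with coefficient matrix $B_k(M)$ from \eqref{eq::Br(M)vec}.

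The first step is the identity $(x^T \otimes Y)\mathrm{vec}(Z) = YZx$. Applied with $\mathrm{vec}(\hat{A}) = \mathcal{A}\hat\kappa$, it gives, for any vector $\kappa'$ of cumulant entries,
\[
\bigl(\mathrm{vec}(M)^T \otimes A^{+}\bigr)\mathcal{A}\kappa' = A^{+}\, A(\kappa')\, \mathrm{vec}(M),
\]
where $A(\kappa')$ denotes the matrix obtained by inserting $\kappa'$ into the linear map $\kappa \mapsto A$.

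The second step is to identify $A(\kappa')\mathrm{vec}(M)$ for $A$ as in \eqref{eq:standardA}. By construction of $A_2$ and $A_r$, the vector $A_k(\mathcal{K}')\mathrm{vec}(M)$ is the unique vectorization of $\mathcal{K}'\times_1 M + \cdots + \mathcal{K}'\times_k M$. By the definition of $B_k(M)$, this equals $B_k(M)\,\mathrm{vec}_u(\mathcal{K}')$, at least after restricting to the rows indexed by off-diagonal entries. One has to check the indexing conventions here: $\kappa$ must contain exactly the unique entries of $\Sigma$ and $\mathcal{K}$, ordered consistently with the block-diagonal $B_{2,r}(M)$, and $B_k(M)$ must be restricted to the off-diagonal rows used in $A$. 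I read $B_{2,r}(M)$ in the statement as this restriction. Granting this, for every $\kappa'$,
\[
A(\kappa')\mathrm{vec}(M) = B_{2,r}(M)\,\kappa'.
\]
Combining with the first step gives $\bigl(\mathrm{vec}(M)^T \otimes A^{+}\bigr)\mathcal{A} = A^{+}B_{2,r}(M)$ as matrices, and substituting this into the covariance formula yields the claim.

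The main obstacle is the bookkeeping in the second step. The multiplicities $N((i_1\dots i_k),\beta)$ in \eqref{eq::generalAmatrixentry} have to match how $B_k(M)$ acts on unique vectorizations, and the row selection, off-diagonal only, has to be consistent with the rows kept in $B_k(M)$. I would verify this entrywise, using the fact that both representations come from the same symmetric tensor $\sum_j \mathcal{K}'\times_j M$ evaluated at index $(i_1\dots i_k)$ with $i_1\le\dots\le i_k$. This avoids handling each multiplicity case separately.
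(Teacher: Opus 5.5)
Your proposal is correct and follows the paper's own proof. The paper works column by column, writing $\mathcal{A}_{*,j} = \mathrm{vec}(A(\kappa_j = 1))$ so that $\bigl(\mathrm{vec}(M)^T \otimes A^{+}\bigr)\mathcal{A}_{*,j} = A^{+}A(\kappa_j = 1)\,\mathrm{vec}(M)$, and then identifies the matrix of these columns with $B_{2,r}(M)$ because each column is the off-diagonal Lyapunov equation evaluated at a unit cumulant; this is your argument specialized to basis vectors $\kappa'$, and your reading of $B_{2,r}(M)$ as restricted to the off-diagonal rows kept in $A$ is what the paper intends, as its remark following the corollary makes explicit.
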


If not all rows in \eqref{eq:standardA} are included in $A$, the matrix $B_{2,r}(M)$ should simply be replaced 
by the submatrix of $B_{2,r}(M)$ only indexed by the same rows as the chosen $A$.

\section{Numerical Experiments} \label{sec:numerical_experiments}

The Julia package \url{https://github.com/nielsrhansen/SteadyStateStatistics.jl} 
implements simulation from $M$-selfdecomposable distributions and computation of the 
least singular value estimator of $M$, based on the multivariate cumulant estimators 
in the Cumulants package \citep{Domino:2018}.

To illustrate properties of the estimator, we ran a simulation study with the 
Lévy process having independent and identically distributed coordinates, each of which is a 
compound Poisson process with beta distributed jumps. We parametrize the Lévy process by 
the three parameters
\begin{align*}
    \lambda > 0: & \text{ the rate parameter of the Poisson process} \\    
    \mu \in (0, 1): & \text{ the mean parameter of the beta jump distribution} \\    
    \nu > 0: & \text{ the size parameter of the beta jump distribution}. 
\end{align*}
The stable $M$-matrix is parametrized by the two parameters $\gamma \in \mathbb{R}$ and 
$\rho \in (-1/(d-1), 1)$. The parameter $\rho$ is a correlation parameter, which means 
that with $\Sigma$ the covariance matrix that solves the Lyapunov equation \eqref{eq::SecondOrderLyapunov}, then 
$$
    \rho = \frac{\Sigma_{ij}}{\sqrt{\Sigma_{ii} \Sigma_{jj}}}
$$
for all $i \neq j$. The parameter $\gamma$ controls the asymmetry of $M$ with $\gamma = 0$ 
corresponding to the symmetric $M = - c\Sigma$ for some $c > 0$. The details of this 
parametrization are given in Appendix \ref{sec:numerical_details}, which also describes 
how observations are sampled. 

We show results for the settings given by the grid of the following simulation parameters:  
\begin{align*}
    d: & \ 3, 6, 12 \\
    n: & \ 1000, 2000, 4000, 8000 \\
    \gamma: & \ 5.0, 10.0, 15.0 \\
    \rho: & \ 0.2, 0.8 
\end{align*}
and with $\lambda = 0.5$, $\mu = 0.8$ and $\nu = 1.0$. For each setting, $N = 100$ replications 
were done, and for each replication we computed the estimate $\hat{M}^{(l)}$, which by 
construction is standardized to have Frobenius norm one, that is, 
$
    \| \hat{M}^{(l)} \|_F^2 = \sum_{i,j} (\hat{M}_{ij}^{(l)})^2 = 1.
$
The matrix $M$ was also standardized to have Frobenius norm one, and the 
mean squared error was computed as
$$
    \mathrm{MSE} = \frac{1}{N} \sum_{l = 1}^N \| \hat{M}^{(l)} - M\|_F^2.
$$
Due to the standardization,  $\| \hat{M}^{(l)} - M\|_F^2 = 2(1 - \cos(\theta^{(l)}))$, 
where $\theta^{(l)}$ is the angle between the subspaces spanned by $\hat{M}^{(l)}$ and $M$, see Figure \ref{fig:estimator}. The MSE of the standardized estimator is thus 
equal to the mean of twice the cosine distance between $\hat{M}^{(l)}$ and $M$.

The MSE can be decomposed into a total variance term and a total squared bias term,
$$
\mathrm{MSE} = \frac{1}{N} \sum_{l = 1}^N \| \hat{M}^{(l)} - \overline{M}\|_F^2
+ \| \overline{M} - M\|_F^2.
$$
According to Theorem \ref{thm::AsympoticsSingularValueEst}, the $\sqrt{n}$-scaled 
total bias, $\sqrt{n} \| \overline{M} - M\|_F$, should vanish with $n$ tending to 
infinity, and the $\sqrt{n}$-scaled
root mean squared error should tend to the square root of the asymptotic total variance, 
that is, the square root of the trace of the asymptotic covariance matrix given by  
Theorem \ref{thm::AsympoticsSingularValueEst} or Corollary \ref{cor::RewriteAsymptoticCov}.

\begin{figure}
    \centering
    \includegraphics[width=\linewidth]{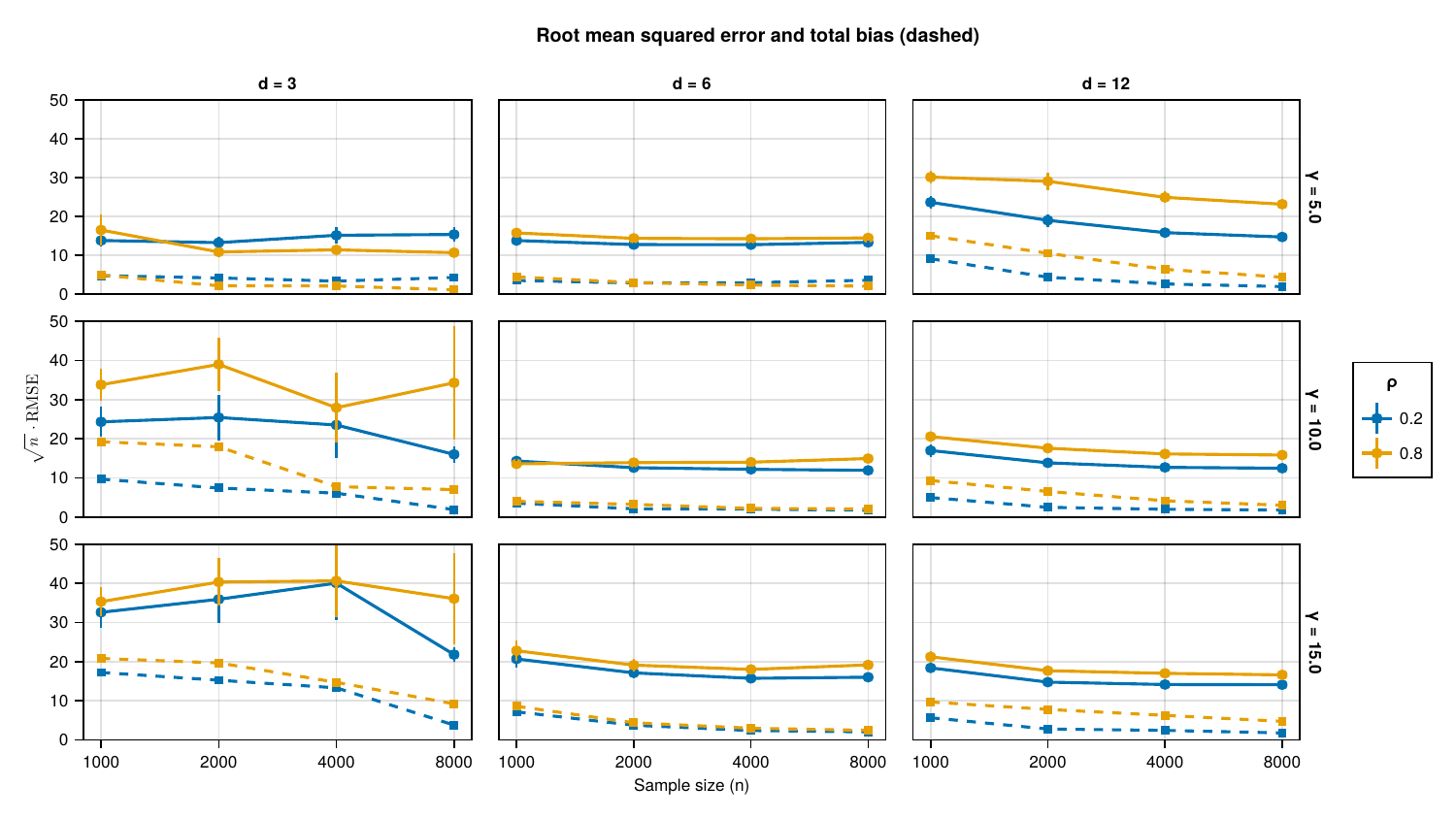}
    \caption{Estimation errors for the least singular value estimator based on all second- and third-order cumulants. Circles connected with full lines show the $\sqrt{n}$-scaled root mean squared error, while the squares connected with dashed lines show the $\sqrt{n}$-scaled total bias.}
    \label{fig:estimation_errors}
\end{figure}

Figure \ref{fig:estimation_errors} shows the scaled root mean squared error
as well as the scaled total bias as a function of the sample size for 
the different dimensions and different values of the simulation parameters. 
The estimator is the least singular value estimator based on all second- 
and third-order cumulants. We see that for several of the settings there is 
a notable finite sample bias, in particular when the correlation is set to 
the high value $\rho = 0.8$, but for large enough sample sizes the bias 
disappears and the scaled root mean squared error stabilizes. 

Figure \ref{fig:estimation_errors_asymp} compares the scaled root mean squared error
to the square root of the asymptotic total variance obtained by 
Corollary \ref{cor::RewriteAsymptoticCov}. For all settings presented here,
the asymptotic total variance eventually concurs with the mean squared error 
when the sample size is large enough. This is aligned with the observation
that the total bias vanishes.

\begin{figure}
    \centering
    \includegraphics[width=\linewidth]{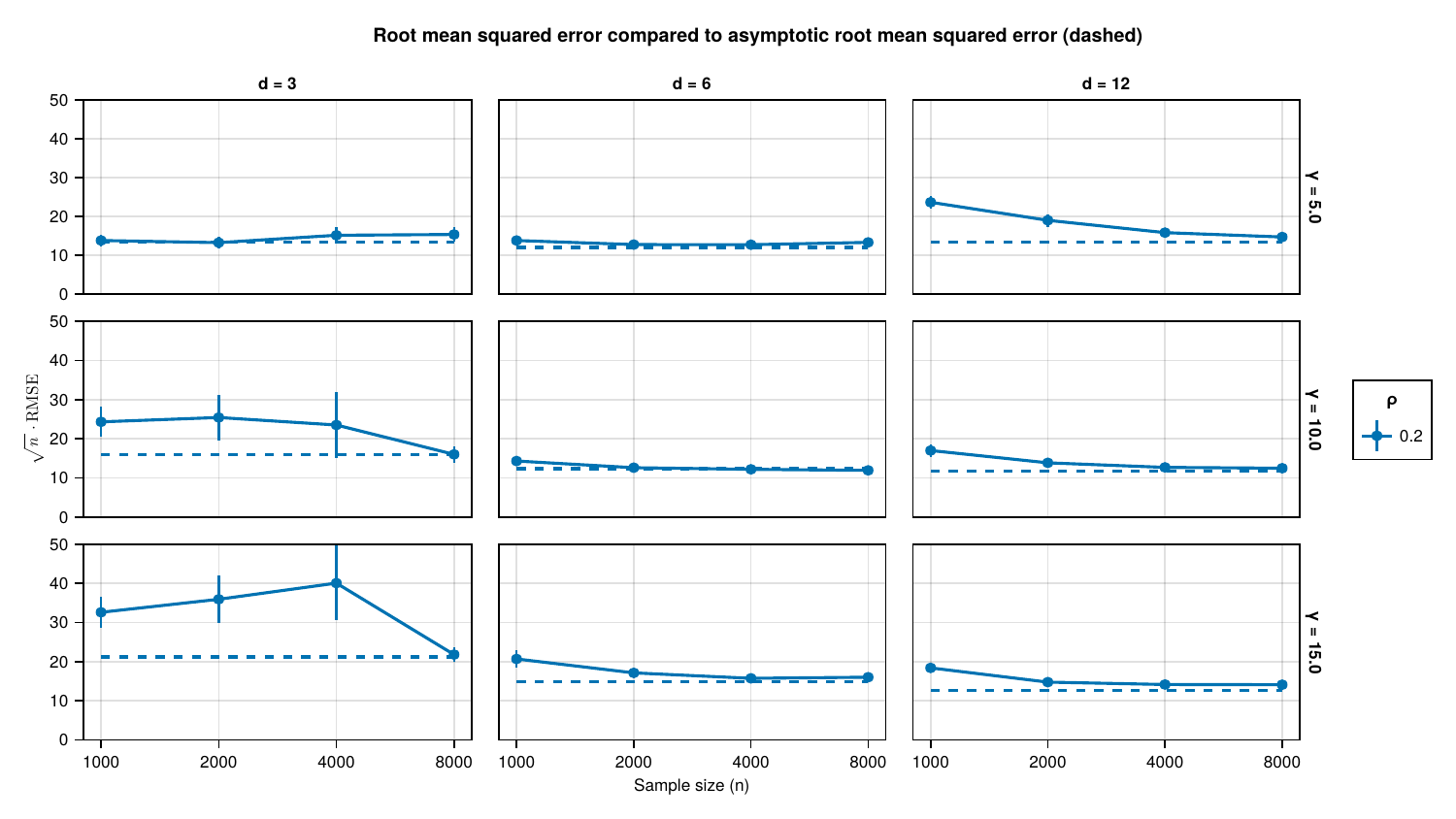}
    \caption{Root mean squared error scaled with $\sqrt{n}$ compared to 
    the square root of the total asymptotic variance (dashed).}
    \label{fig:estimation_errors_asymp}
\end{figure}

We ran simulations for other parameter values (data not shown). For $d = 24$ or
$\gamma = 0$, the estimation problem is harder, and even larger sample sizes are 
required for accurate estimation. Other choices of parameters for the compound 
Poisson process were also tried, e.g., $\lambda = 0.1, 2.0$, $\mu = 0.2$ and 
$\nu = 0.1, 10$. While the results  differed quantitatively from the results 
shown, the qualitative differences were minor. 

In addition to the estimator based on second- and third-order cumulants only, 
we implemented and tested the estimator that additionally included the fourth-order 
cumulants. 
Figure \ref{fig:estimation_errors_k4} in Appendix \ref{sec:numerical_details} 
shows the results for this least singular value estimator based on all second-, third-  
and fourth-order cumulants. For $\rho = 0.8$, inclusion of 
fourth-order cumulants generally increased the estimation error. For $\rho = 0.2$, 
the effect of including the fourth-order cumulants was mixed and depended on $d$ and $\gamma$. 

\section{Discussion} \label{sec::discussion} 

Our main identifiability result, Theorem \ref{thm::GenericIdResult}, leaves two minor questions 
open. First, it remains unknown to what extent the assumption that the graph 
contains all self-loops can be relaxed. Our proof strategy relies on the self-loops
to ensure the existence of a sufficiently simple and stable $M$ that allows for 
explicit computations, see also Remark \ref{rem::Selfloops}. Second, it remains unknown whether the exceptional set, where 
identification fails, corresponds only to parameter values that disconnect the graph. 
If we are willing to make additional assumptions, stronger results can also be obtained. 
If we, for example, additionally assume that the $r$-th-order cumulant 
of the Lévy process, for some $r \geq 3$, is known and diagonal, Proposition \ref{prop::ThirdOrderID} 
in Appendix \ref{Appendix::AddtionalIDResults} shows that the drift matrix $M$ is, 
in fact, generically identifiable for \emph{any} directed graph with all self-loops present.

There are also natural generalizations and extensions of our results. One direction of
future research is to allow the cumulants of the Lévy process to be non-diagonal,
as encoded by blunt edges in Appendix \ref{Appendix::treks}. Following 
the arguments by \cite{varando20a}, the marginalization of an $M$-selfdecomposable 
distribution is a distribution with cumulants solving continuous Lyapunov equations 
for a new $M$ and a new $\mathcal{C}$, whose sparsity patterns are obtained by 
projecting the directed graph to a graph with directed and blunt edges. This generalization
would be a way of expanding the model class to be able to handle latent variables. 
However, the marginalized $M$ is not guaranteed to be stable, and the marginalized
$\mathcal{C}_2$ is, for example, not guaranteed to be positive definite \citep{varando20a}. 
The proof of Theorem \ref{thm::GenericIdResult} could, nevertheless, easily be modified
to allow for specific $r$-th-order cumulants to be non-zero, e.g., those whose indices
do not correspond to the rows used to prove the generic rank of $A_{\mathrm{off}}$. Another 
direction of future research is to consider structural identifiability instead. The results 
by \cite{amendola2025structuralidentifiabilitygraphicalcontinuous} on structural identifiability 
are based on the second order continuous Lyapunov equation only, and by expanding their 
approach to take higher order cumulants into account, we could likely answer some of the 
open questions by \cite{amendola2025structuralidentifiabilitygraphicalcontinuous}. 

The present paper focuses on the identification theory and does not consider specific 
applications, but the theory is motivated by the possible applications to cross-sectional 
observations from single cell biology. The $M$-selfdecomposable distributions can, as 
steady-state distributions of solutions to stochastic differential equations, be given 
natural causal interpretations \citep{Sokol:2014, varando20a}, which was exploited by, for example, 
\cite{Wang:2023aa} to learn gene regulatory networks from temporal snapshots. 
The network is then encoded by the sparsity pattern of $M$, and if the model has a 
causal interpretation, an edge represents a direct causal effect. An alternative to 
$M$-selfdecomposable distributions is classical linear structural equations
\citep{DrtonOversigt}, but feedback loops present both mathematical and interpretational 
challenges for such models. The implicit temporal nature of $M$-selfdecomposable distributions
allows feedback loops to be naturally incorporated into the model, and the 
interpretation and properties of the model do not depend on whether there are feedback 
loops or not. However, a realistic application to single cell data requires a measurement 
noise model, such as the zero-inflated Poisson model by \cite{lorch_latent_2026}, and this is 
beyond the scope of the current paper.

Finally, as demonstrated by our simulation study, even when the drift matrix 
is identifiable in the non-Gaussian case from the higher-order cumulants, estimation is 
hard for small sample sizes. This is also seen in the general literature that uses 
non-Gaussianity and higher-order cumulants for identification and estimation 
\citep{JMLRv7shimizu06a, schkoda2024causaldiscoverylinearnongaussian, tramontano2025causaleffectidentificationlvlingam}, and \cite{JMLR:Wang_Drton} explicitly 
comments on the need for large sample sizes. An ongoing research project explores how
additional assumptions, e.g., sparsity of $M$ or homogeneity of the Lévy process across 
dimensions, can improve finite sample estimation accuracy.

\section*{Acknowledgment}

We would like to thank Mathias Drton and Anton Rask Lundborg for helpful discussions. This work was supported by a research grant (NNF20OC0062897) from Novo Nordisk Fonden.
Proposition \ref{prop::tensor-lyapunov}, that the cumulants of the steady-state distribution satisfy the higher-order continuous Lyapunov equations, was first proved by Jeffrey Adams and included in his PhD thesis \citep{Jeff:2024}. The proof given here was obtained independently.

\newpage

\appendix
\appendixone

\section{Appendix -- proofs of main results}
\subsection{The Lyapunov equations} \label{sec:lyapunov-appendix}

\begin{proof}[of Proposition~\ref{prop::tensor-lyapunov}] When $M$ is a 
stable matrix and $Z$ is a Lévy process satisfying \eqref{eq:levy-int-cond}, 
the stochastic integral 
\begin{equation}
		\label{eq:MSD}
		X = \int_0^{\infty} e^{sM} \mathrm{d} Z_s
\end{equation}
is well defined, and the distribution of $X$ is the unique steady-state distribution 
in $\mathcal{P}_G$ determined by $M$ and the Lévy process. This was known by \cite{Sato:1984}, 
and stated indirectly in their introduction with reference to an abstract by \cite{Wolfe:1982}.
It is a direct consequence of Theorem 5.2(i) by \cite{sato_stationary_1983}.

Using the integral representation \eqref{eq:MSD} and multilinearity of the cumulant operator,
	\begin{align*}
		\mathcal{K}  & = \mathrm{cum}_k(X)                                                                                                                                                              \\
		  & = \int_0^{\infty} \! \cdots \! \int_0^{\infty}  \mathrm{cum}_k(\mathrm{d} Z_{s_1}, \ldots, \mathrm{d} Z_{s_k}) \times_1 e^{s_1 M} \times_2 e^{s_2 M} \cdots \times_{k} e^{s_k M} \\ & = \int_0^{\infty} \mathcal{C}_k \times_1 e^{s M} \times_2 e^{sM} \cdots \times_{k} e^{s M} \mathrm{d} s
	\end{align*}
	where we have used that for a Lévy process, the $k$-th-order cumulant measure is ``diagonal'' and equals $$\mathrm{cum}_k(\mathrm{d} Z_{s_1}, \ldots, \mathrm{d} Z_{s_k}) = \mathcal{C}_k \delta_{s_1, \ldots, s_k} H^{1}(ds_1, \ldots, ds_k)$$ with $H^{1}$ the $1$-dimensional Hausdorff measure.
	That $\mathcal{K} $ solves~\eqref{eq:k-lyapunov} follows from Proposition \ref{prop::XuResult}.
\end{proof}


For the second order Lyapunov equation it is well-known that it can be vectorized as 
$$(I \otimes M + M \otimes I) \mathrm{vec}(\Sigma) + \mathrm{vec}(\mathcal{C}_k) = 0,$$ 
which shows that it has a unique solution for $\Sigma$ if no pair of eigenvalues of $M$ add up to zero. This can be generalized to the continuous Lyapunov equation of any order.
\begin{proposition} \label{prop::VecOfEquation}
    The vectorization of the $k$-th-order continuous Lyapunov equation, \eqref{eq:k-lyapunov}, can be written as 
    \begin{equation} \label{eq::vecKVecofKLyapunov}
        0 = \mathrm{vec}(\mathcal{C}_k) + \left( \sum_{i = 1}^k (\underbrace{I_{d}\otimes\cdots\otimes I_{d}}_{k-i \text{ times}} \otimes M \otimes  \underbrace{I_{d}\otimes\cdots\otimes I_{d}}_{i-1 \text{ times}})   \right) \mathrm{vec}(\mathcal{K} ).
    \end{equation}
    As a consequence of this, the $k$-th-order continuous Lyapunov equation is uniquely solvable whenever no $k$ eigenvalues (the same eigenvalue can be picked multiple times) add up to zero.
\end{proposition}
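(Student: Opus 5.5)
We need to prove Proposition prop::VecOfEquation: the vectorization identity, plus the unique solvability claim.

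The plan is to compute the vectorization of each mode product $\mathcal{K}\times_n M$ separately, using the column-major convention $\mathrm{vec}(\mathcal{K})_{i_1+(i_2-1)d+\cdots+(i_k-1)d^{k-1}} = \mathcal{K}_{i_1\cdots i_k}$, so that the first index runs fastest. Since vectorization is linear, summing these $k$ contributions and adding $\mathrm{vec}(\mathcal{C}_k)$ gives \eqref{eq::vecKVecofKLyapunov}. The basic tool is the standard identity for a Tucker product with matrices in every mode:
\[
\mathrm{vec}(\mathcal{K}\times_1 U_1\times_2\cdots\times_k U_k)=(U_k\otimes\cdots\otimes U_1)\,\mathrm{vec}(\mathcal{K}).
\]
I would verify this entrywise. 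The $(j_1,\dots,j_k)$ entry of the left side is $\sum_{i_1,\dots,i_k}\mathcal{K}_{i_1\cdots i_k}\prod_n (U_n)_{j_n i_n}$. In the column-major ordering this is exactly the corresponding row of $U_k\otimes\cdots\otimes U_1$ applied to $\mathrm{vec}(\mathcal{K})$, because the Kronecker product places the factor of the fastest-varying index last. Taking $U_n=M$ and $U_m=I_d$ for $m\neq n$ then gives
\[
\mathrm{vec}(\mathcal{K}\times_n M)=(I_d^{\otimes(k-n)}\otimes M\otimes I_d^{\otimes(n-1)})\,\mathrm{vec}(\mathcal{K}),
\]
which matches the $i=n$ term in the displayed sum. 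For $k=2$ this specializes to the known formula $(I\otimes M+M\otimes I)\mathrm{vec}(\Sigma)$, which serves as a check.

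For the solvability claim, write $L=\sum_{i=1}^k I^{\otimes(k-i)}\otimes M\otimes I^{\otimes(i-1)}$; this is the Kronecker sum $M\oplus\cdots\oplus M$. The equation $L\,\mathrm{vec}(\mathcal{K})=-\mathrm{vec}(\mathcal{C}_k)$ has a unique solution exactly when $L$ is nonsingular, so it suffices to show that the eigenvalues of $L$ are the sums $\lambda_{j_1}+\cdots+\lambda_{j_k}$ of eigenvalues of $M$, counted with repetition.

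To get the eigenvalues, take a Schur decomposition $M=QTQ^{*}$ with $Q$ unitary and $T$ upper triangular, working over $\mathbb{C}$. Set $Q^{\otimes k}=Q\otimes\cdots\otimes Q$. By the mixed-product property,
\[
(Q^{\otimes k})^{*}\,L\,Q^{\otimes k}=\sum_i I^{\otimes(k-i)}\otimes T\otimes I^{\otimes(i-1)}.
\]
Each summand on the right is a Kronecker product of upper triangular matrices, hence upper triangular. The diagonal entry of the sum indexed by $(j_1,\dots,j_k)$ is $T_{j_1j_1}+\cdots+T_{j_kj_k}$. So $\det L=\prod_{j_1,\dots,j_k}(\lambda_{j_1}+\cdots+\lambda_{j_k})$, which is nonzero under the stated hypothesis. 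This argument is linear algebra only and so also covers non-stable $M$. When $M$ is stable, any such sum has negative real part and cannot vanish.

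The only real obstacle is keeping the index conventions straight, namely the ordering of the Kronecker factors against the vec ordering. I would settle it by checking the entrywise computation for a single mode against the $k=2$ formula.
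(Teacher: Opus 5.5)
Your proof is correct and has the same structure as the paper's. The vectorization step uses the same Kronecker/vec identity for mode products; you verify it entrywise, with the column-major convention that matches the order of the factors in the statement. The solvability step then reduces to showing that the Kronecker sum $L$ has eigenvalues $\lambda_{j_1}+\cdots+\lambda_{j_k}$.

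The only difference is how you get that spectrum. You triangularize all $k$ summands at once by conjugating with $Q^{\otimes k}$ from a Schur decomposition $M = QTQ^{*}$. The paper instead argues that each summand has the spectrum of $M$ and that the summands commute. Your version spells out the simultaneous triangularization that the commuting-matrices argument relies on implicitly. It also gives the full spectrum with multiplicities, so you get $\det L$ in closed form and, without extra work, the converse: $L$ is nonsingular if and only if no such sum vanishes.
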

\begin{proof}
    By rules of vectorizations and Kronecker products, see Chapter 2 by \cite{magnus_neudecker_2019}, we can write
    \begin{equation*}
        \mathrm{vec}(\mathcal{K}  \times_i M) = (\underbrace{I_{d}\otimes\cdots\otimes I_{d}}_{k-i \text{ times}} \otimes M \otimes  \underbrace{I_{d}\otimes\cdots\otimes I_{d}}_{i-1 \text{ times}})\mathrm{vec}(\mathcal{K} ),
    \end{equation*}
    for $\mathcal{K}  \in \mathrm{Sym}^k(\mathbb{R}^d)$ and $M \in \mathbb{R}^{d \times d}$. This yields equation \eqref{eq::vecKVecofKLyapunov}. 

    Furthermore, since the eigenvalues of $A \otimes B$ are equal to products of the eigenvalues of $A$ and $B$, it follows that no matter the place of the single $M$, the eigenvalues of any tensor product, $I_d \otimes \cdots \otimes I_d \otimes M \otimes I_d \otimes \cdots \otimes I_d $, is just equal to the eigenvalues of $M$ (each with multiplicity $d$). Furthermore, since all the matrices of this form, $I_d \otimes \cdots \otimes I_d \otimes M \otimes I_d \otimes \cdots \otimes I_d $, commute with each other (since all the $M$'s are placed in different modes), the eigenvalues of the sum of them in equation \eqref{eq::vecKVecofKLyapunov} is the sum of the their eigenvalues. Thus, any eigenvalue of
    the sum in \eqref{eq::vecKVecofKLyapunov} is a sum of $k$ eigenvalues of $M$. And if no $k$ eigenvalues of $M$ add up to zero, the sum is a matrix of full rank, and the $k$-th-order continuous Lyapunov equation is uniquely solvable for $\mathcal{K} $ given $M$ and $\mathcal{C}_k$. 
\end{proof}

Equation \eqref{eq::vecKVecofKLyapunov} is the standard vectorization of the Lyapunov equation, and 
we could instead do the unique vectorization of both the equation and of $\mathcal{K} $,
\begin{equation} \label{eq::Br(M)vec}
    0 = \mathrm{vec}_u(\mathcal{C}_k) + B_k(M) \mathrm{vec}_u(\mathcal{K} ).
\end{equation}
The rows of $B_k(M)$ can be obtained by adding the columns of the matrix in equation \eqref{eq::vecKVecofKLyapunov} corresponding to the same cumulant, i.e., for an index set $(i_1 \dots i_k)$ taking all columns corresponding to an index of $\mathcal{K} $ which is a permutation of $(i_1 \dots i_k)$. To also have it be the unique vectorization of the equation we only keep the rows indexed by $(i_1 \dots i_k)$ such that $i_1 \leq \cdots \leq i_k$ as was done for the unique vectorization isolating $\mathrm{vec}(M)$ instead in equation \eqref{eq::VecThirdOrder}.

When $M$ is stable, there is an explicit formula for the solution of the continuous Lyapunov equation.
\begin{proposition}[Corollary 3.1, \cite{Xu:2021}] \label{prop::XuResult} 
Let $M$ be a  $d \times d$ stable matrix, then the $k$-th-order continuous Lyapunov equation \eqref{eq:k-lyapunov} has the unique solution
\begin{equation}
    \label{eq::generalSol}
    \mathcal{K}  = \int_{0}^{\infty} \mathcal{C}_k \times_1 e^{M t} \times_2 e^{M t} \cdots \times_{k} e^{M t} \mathrm{d} t. 
\end{equation}
\end{proposition}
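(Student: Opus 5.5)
We need to prove Proposition \ref{prop::XuResult}: for stable $M$, the integral \eqref{eq::generalSol} converges and is the unique solution of \eqref{eq:k-lyapunov}.

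The plan is to work with the vectorized form from Proposition \ref{prop::VecOfEquation}. Write
\[
\mathcal{L}_M = \sum_{i=1}^k I_d\otimes\cdots\otimes M\otimes\cdots\otimes I_d ,
\]
so that \eqref{eq:k-lyapunov} reads $\mathcal{L}_M \mathrm{vec}(\mathcal{K}) + \mathrm{vec}(\mathcal{C}_k)=0$.

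\textbf{Uniqueness.} Since $M$ is stable, every sum of $k$ eigenvalues of $M$ has negative real part, so none is zero. By Proposition \ref{prop::VecOfEquation}, $\mathcal{L}_M$ is then invertible and the solution is unique. Because the eigenvalues of $\mathcal{L}_M$ are exactly these sums, $\mathcal{L}_M$ is itself a stable $d^k\times d^k$ matrix.

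\textbf{Existence and the formula.} The summands of $\mathcal{L}_M$ act on different modes, so they commute. Hence
\[
e^{t\mathcal{L}_M}=\prod_i \bigl(I\otimes\cdots\otimes e^{tM}\otimes\cdots\otimes I\bigr)=e^{tM}\otimes\cdots\otimes e^{tM}.
\]
By the same vectorization rule used in the proof of Proposition \ref{prop::VecOfEquation}, this gives
\[
e^{t\mathcal{L}_M}\mathrm{vec}(\mathcal{C}_k)=\mathrm{vec}\bigl(\mathcal{C}_k\times_1 e^{tM}\cdots\times_k e^{tM}\bigr).
\]
Since $\mathcal{L}_M$ is stable, $\|e^{t\mathcal{L}_M}\|\le Ce^{-\epsilon t}$ for some $C,\epsilon>0$, so the integral converges absolutely. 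We then compute
\[
\mathcal{L}_M\int_0^\infty e^{t\mathcal{L}_M}\mathrm{vec}(\mathcal{C}_k)\,dt=\int_0^\infty \frac{d}{dt}e^{t\mathcal{L}_M}\mathrm{vec}(\mathcal{C}_k)\,dt=-\mathrm{vec}(\mathcal{C}_k).
\]
This step interchanges $\mathcal{L}_M$ with the integral, which is justified by linearity and absolute convergence, and uses that the boundary term at $\infty$ vanishes by the exponential decay. So the integral is a solution, and by uniqueness it is the solution.

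An equivalent route avoids vectorization: differentiate $t\mapsto \mathcal{C}_k\times_1 e^{tM}\cdots\times_k e^{tM}$ by the product rule. Each factor contributes a term $\times_i M$, since $\frac{d}{dt}e^{tM}=Me^{tM}$ and the $i$-mode product composes as $T\times_i A\times_i B=T\times_i(BA)$. Integrating from $0$ to $\infty$ then gives $\sum_i\mathcal{K}\times_i M=-\mathcal{C}_k$.

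The only mildly delicate step is the Kronecker identity for $e^{t\mathcal{L}_M}$, which needs the commutation of the summands, together with getting the mode-product ordering right. If $\mathcal{C}_k$ is symmetric, $\mathcal{K}$ is symmetric as well, because every mode is multiplied by the same $e^{tM}$.
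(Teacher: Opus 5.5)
Your proof is correct. It differs from the paper mainly in that the paper gives no proof of its own here. It imports the integral formula from Corollary 3.1 of \cite{Xu:2021}. Its only in-house argument is Proposition \ref{prop::VecOfEquation}, which yields uniqueness (indeed for any $M$ with no $k$ eigenvalues summing to zero) but says nothing about the integral representation.

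Your uniqueness step is exactly that vectorization argument. Your existence step supplies what the paper outsources:
\begin{itemize}
\item you identify the integrand with $e^{t\mathcal{L}_M}\mathrm{vec}(\mathcal{C}_k)$, using that the Kronecker summands commute;
\item you use stability of $\mathcal{L}_M$ to get exponential decay;
\item you integrate the derivative to obtain $\mathcal{L}_M\mathrm{vec}(\mathcal{K})=-\mathrm{vec}(\mathcal{C}_k)$.
\end{itemize}
This is the classical proof for the matrix Lyapunov equation, lifted to the $k$-fold Kronecker sum. It makes the proof of Proposition \ref{prop::tensor-lyapunov}, which the paper closes by appealing to this proposition, self-contained.

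Your coordinate-free variant is arguably cleaner. It needs only $\frac{d}{dt}e^{tM}=Me^{tM}$, the rule $T\times_i A\times_i B=T\times_i(BA)$, and commutation of mode products in different modes, so it sidesteps the vectorization ordering conventions. Uniqueness still requires the spectral argument on $\mathcal{L}_M$. Citing the external result buys the paper brevity; your route costs a few lines but removes the dependence on the reference.
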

Equation \eqref{eq::generalSol} is the key to deriving the so-called trek formulas for the entries of the cumulants of the continuous Lyapunov equations. The general trek rules provide a graphical description of the entries of the cumulants that can allow one to determine zeros in the cumulants just from the absence of certain treks, corresponding to the absence of several paths, in the graph. Additionally, 
a specialized version of the trek formulas is a key ingredient in the proof of the main identifiability result, Theorem \ref{thm::GenericIdResult}. The details and proofs concerning treks and the trek representations for cumulants are provided in Appendix \ref{Appendix::treks}.

\subsection{Proof of the main result and additional lemmas and corollaries} \label{Appendix::MainresultLemmas}

The proof of Theorem \ref{thm::GenericIdResult} uses a specific restricted trek rule for the second- and $r$-th-order cumulants, very similar to the restricted trek rule given in Proposition 4.3 by \cite{boege2024conditionalindependencestationarydiffusions}. Such restricted trek rules can be seen as special cases of a more general trek rule first published by \cite{hansen2024trekrulelyapunovequation} in the covariance case. We provide the extension of the general trek rule from the covariance case to 
cumulants of any order in Appendix \ref{Appendix::treks}, where a definition of treks is also given. 
For the purpose of proving Theorem~\ref{thm::GenericIdResult}, we give here a simple proof of the 
restricted trek rule for directed acyclic graphs. 

\begin{lemma}
    \label{lem::simplifiedtrekruleDAG}
    Let $G = ([d], E)$ be any directed acyclic graph (DAG) with all self-loops and with the nodes ordered in a topological order. Let $r \geq 3$ be an integer and consider the second- and $r$-th-order Lyapunov model with  $\mathcal{C}_2$ and $\mathcal{C}_r$ equal to the identity matrix and identity tensor, respectively, and $M \in \mathbb{R}^{E}$ such that $M_{ii} = -1/(r \zeta)$. Then we have the following specialized trek rules for the second- and $r$-th-order cumulants
    \begin{align*}
        \Sigma_{i_1 i_2} &=  \sum_{ \tau \in \mathcal{T}(i_1, i_2)} \zeta^{\ell_{i_1} + \ell_{i_2} + 1}  \left(\frac{r}{2} \right )^{\ell_{i_1} + \ell_{i_2} + 1}  \binom{\ell_{i_1} + \ell_{i_2}}{\ell_{i_1}} \omega(M, \tau) \\ 
        \mathcal{K} _{i_1 \dots i_r} &= \sum_{ \tau \in \mathcal{T}(i_1, \dots, i_r)}  \zeta^{\sum_{j = 1}^r\ell_{i_j} +1} \frac{(\sum_{j = 1}^r\ell_{i_j} )!}{\prod_{j = 1}^r (\ell_{i_j}!) } \omega(M, \tau),
    \end{align*}
    where $\mathcal{T}(i_1, i_2)$ and $\mathcal{T}(i_1, \dots, i_r)$ denote all treks \textit{without} self-loops between $i_1$ and $i_2$ and $i_1, \dots, i_r$, respectively, and 
    $\omega(M,\tau)$ denotes the trek monomial. That is, $\omega(M, \tau) = \prod_{ \alpha \rightarrow \beta \in \tau} M_{\beta \alpha}$ since $\mathcal{C}_2$ and $\mathcal{C}_r$ are the identity.  
\end{lemma}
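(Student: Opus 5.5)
We start from the explicit solution of Proposition~\ref{prop::XuResult}. Since $\mathcal{C}_2$ and $\mathcal{C}_r$ are the identity matrix and identity tensor,
\[
\Sigma_{i_1 i_2} = \sum_{j} \int_0^\infty (e^{Mt})_{i_1 j}(e^{Mt})_{i_2 j}\,\mathrm{d}t,
\qquad
\mathcal{K}_{i_1\dots i_r} = \sum_{j} \int_0^\infty \prod_{m=1}^r (e^{Mt})_{i_m j}\,\mathrm{d}t .
\]
The plan is to obtain a closed form for $(e^{Mt})_{ij}$ as a sum over directed paths, and then integrate the resulting products term by term.

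Write $M = -\frac{1}{r\zeta} I + N$, where $N$ holds the off-diagonal part of $M$. The two summands commute, so $e^{Mt} = e^{-t/(r\zeta)} e^{Nt}$. Because $G$ is acyclic and the nodes are topologically ordered, $N$ is strictly triangular and hence nilpotent. The exponential series therefore terminates:
\[
(e^{Nt})_{ij} = \sum_{\ell\ge 0} \frac{t^\ell}{\ell!}(N^\ell)_{ij} = \sum_{\pi: j\to i} \frac{t^{\ell(\pi)}}{\ell(\pi)!}\,\omega(M,\pi),
\]
where $\pi$ ranges over directed paths from $j$ to $i$ without self-loops and $\ell(\pi)$ is the number of edges. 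Here I use that $(N^\ell)_{ij}$ is exactly the sum of the path monomials $\prod M_{\beta\alpha}$ over such paths of length $\ell$. This gives $(e^{Mt})_{ij} = e^{-t/(r\zeta)}\sum_\pi t^{\ell(\pi)}\omega(M,\pi)/\ell(\pi)!$.

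Next, substitute this expression into the integrals. A choice of a common source $j$ together with one path from $j$ to each $i_m$ is by definition a trek in $\mathcal{T}(i_1,\dots,i_r)$ with top $j$, and its trek monomial is the product of the path monomials. Write $\ell_{i_m}$ for the length of the leg ending at $i_m$ and $L=\sum_m \ell_{i_m}$. Each trek then contributes
\[
\frac{\omega(M,\tau)}{\prod_m \ell_{i_m}!}\int_0^\infty t^{L} e^{-k t/(r\zeta)}\,\mathrm{d}t
= \frac{\omega(M,\tau)}{\prod_m \ell_{i_m}!}\, L!\left(\frac{r\zeta}{k}\right)^{L+1},
\]
using the Gamma integral $\int_0^\infty t^L e^{-ct}\mathrm{d}t = L!/c^{L+1}$. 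Here $k$ is the number of legs, so $k=r$ for $\mathcal{K}$ and $k=2$ for $\Sigma$.

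For $k=r$ the factor $(r\zeta/r)^{L+1}=\zeta^{L+1}$ gives exactly the stated multinomial formula. For $k=2$ the factor becomes $(r/2)^{\ell_{i_1}+\ell_{i_2}+1}\zeta^{\ell_{i_1}+\ell_{i_2}+1}$, and $L!/(\ell_{i_1}!\ell_{i_2}!)$ is the stated binomial coefficient, so the formula for $\Sigma$ matches as well.

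The main point needing care is the bookkeeping that identifies the sum over $j$ and over tuples of paths with the set of treks $\mathcal{T}(\cdot)$ as the paper defines it in Appendix~\ref{Appendix::treks}. Two cases must be handled there. The first is trivial legs of length zero, where $j=i_m$. The second is repeated indices $i_m$, where distinct ordered tuples of paths have to be counted as distinct treks, with no symmetrization. I would fix that convention explicitly, treating a trek as an ordered $r$-tuple of paths with a common top, so that the sum matches term by term. Convergence of the integrals and the interchange of the finite sums with integration are immediate, since all sums are finite and $\zeta>0$ makes $M$ stable.
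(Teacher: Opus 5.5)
Your proposal is correct and follows essentially the same route as the paper: you split off the scalar diagonal $-\frac{1}{r\zeta}I$, use nilpotency of the strictly triangular remainder so the exponential series becomes a finite sum over directed paths without self-loops, and integrate term by term with $\int_0^\infty t^L e^{-kt/(r\zeta)}\,\mathrm{d}t = L!\,(r\zeta/k)^{L+1}$. The only cosmetic differences are that you work entrywise with products of $(e^{Mt})_{i_m j}$ rather than with mode products of the identity tensor, and you use nilpotency before integrating rather than first interchanging an infinite series with the integral.
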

\begin{proof}
    For notational convenience the proof is given for the case $r = 3$. It is, however, 
    easily seen from this proof how the general case follows by keeping track of 
    $r$ mode products instead of $3$, and replacing $3$ by $r$ everywhere.  
    
    Let $M_{\mathrm{DAG}} = M - (-1/(3 \zeta) I_{d\times d})$ denote the strictly lower triangular part of $M$ which corresponds to all the edges in $G$ which are not self-loops, by subtracting a scaling of the $d \times d$ identity matrix, $I_{d \times d}$. We decompose $M$ as
    \begin{align*}
        M = M_{\mathrm{DAG}}-\frac{1}{3 \zeta} I_{d \times d}.  
    \end{align*}
    Since $M_{\mathrm{DAG}}$ is lower-triangular it has spectral radius $0$, which is 
    especially less than $1$. We can use this decomposition of $M$ instead of the $(\Lambda - I_{d \times d})$ used in Proposition \ref{prop::LambdaTrekRuleGeneral}, or in Proposition 2.4 by \cite{hansen2024trekrulelyapunovequation} for the covariance case, and do exactly the same proof with the identity instead of $\mathcal{C}$ and $M_{\mathrm{DAG}}$ instead of $\Lambda$ and the exponent of $\mathrm{e}$ will be $-2/(3 \zeta)$ and $-1/\zeta$, respectively. When letting $I_{d \times d \times d}$ denote the identity $d \times d \times d$ tensor, writing out the derivation of the third-order trek formula in this special case becomes
        \begin{align*}
        \mathcal{K}  &= \int_{0}^{\infty} I_{d \times d \times d} \times_1 e^{M t} \times_2 e^{M t} \times_{3} e^{M t} \mathrm{d} t \\ 
        &= \int_{0}^{\infty} I_{d \times d \times d} \times_1 e^{(M_{\mathrm{DAG}}-1/(3 \zeta) I_{d \times d}) t} \times_2 e^{(M_{\mathrm{DAG}}-1/(3 \zeta) I_{d \times d}) t}  \times_{3} e^{(M_{\mathrm{DAG}}-1/(3 \zeta) I_{d \times d}) t} \mathrm{d} t \\ 
        &= \int_{0}^{\infty} e^{- \frac{1}{\zeta} t }  I_{d \times d \times d} \times_1 e^{M_{\mathrm{DAG}} t} \times_2 e^{M_{\mathrm{DAG}} t}  \times_{3} e^{M_{\mathrm{DAG}} t} \mathrm{d} t \\
        &= \int_{0}^{\infty} e^{- \frac{1}{\zeta} t }  I_{d \times d \times d} \times_1 \sum_{n = 0}^{\infty} \frac{t^n}{n!} M_{\mathrm{DAG}}^n \times_2 
        \sum_{m = 0}^{\infty} \frac{t^m}{m!} M_{\mathrm{DAG}}^m \times_3  \sum_{k = 0}^{\infty} \frac{t^k}{k!} M_{\mathrm{DAG}}^k \mathrm{d} t \\ 
        &= \sum_{n = 0}^{\infty} \sum_{m = 0}^{\infty} \sum_{k = 0}^{\infty} \int_{0}^{\infty} \frac{t^{n+m+k} e^{- \frac{1}{\zeta} t }}{n! m! k!}  
        I_{d \times d \times d} \times_1  M_{\mathrm{DAG}}^n \times_2 M_{\mathrm{DAG}}^m \times_3  M_{\mathrm{DAG}}^k \mathrm{d} t \\ 
        &=  \sum_{n = 0}^{\infty} \sum_{m = 0}^{\infty} \sum_{k = 0}^{\infty} \frac{\zeta^{n+m+k+1}  \Gamma(n+m+k+1)}{n! m! k!} 
        I_{d \times d \times d} \times_1  M_{\mathrm{DAG}}^n \times_2 M_{\mathrm{DAG}}^m \times_3  M_{\mathrm{DAG}}^k. 
    \end{align*}
The $(i,j,k)$ entry of the tensor $I_{d \times d \times d} \times_1  M_{\mathrm{DAG}}^n \times_2 M_{\mathrm{DAG}}^m \times_3  M_{\mathrm{DAG}}^k$ corresponds to the sum of all treks in the acyclic part between $i$, $j$ and $k$ with a given top node with the paths going to each of them having lengths $n,m,k$, respectively. To see this we write out the $(i,j,k)$ entry 
\begin{align*}
    \left(I_{d \times d \times d} \times_1  M^n_{\mathrm{DAG}} \times_2 M^m_{\mathrm{DAG}} \times_3  M^k_{\mathrm{DAG}}\right)_{i j k} &= \sum_{\alpha = 1}^{d} 
     (M^{n}_{\mathrm{DAG}})_{i \alpha} \cdot (M^{m}_{\mathrm{DAG}})_{j \alpha} \cdot (M^{k}_{\mathrm{DAG}})_{k \alpha},
\end{align*} 
and note that $(M^n_{\mathrm{DAG}})_{i \alpha}$ is exactly the sum of all directed walk polynomials of length $n$ from $\alpha$ to $i$ in the directed acyclic part of $G$ since $M_{\mathrm{DAG}}$ is the adjacency matrix (with weights) of the directed acyclic part of $G$. See also 
the proof of Proposition \ref{prop::ThirdOrderTrekwithS}. 

Because the graph is assumed to be a DAG, the sum above becomes finite, and for the $(i,j,k)$-th entry we therefore obtain exactly the finite sum over all treks between $i$, $j$ and $k$ as given the statement of the proposition by inserting the above into the sum. 

The proof is completely analogous for the covariance just with one term less and the exponent of the exponential being $(-2/(3 \zeta)) t$ instead of $(- 1/ \zeta) t$. 
\end{proof}

The specific trek rule above gives a finite sum representation of the entries of the cumulants, as opposed to the potential infinite sums in the general trek rules in Appendix \ref{Appendix::treks}.
The proof of Theorem~\ref{thm::GenericIdResult} uses the special case given in the following 
corollary.

\begin{corollary}
\label{cor::simplifiedTrekRuleDAGs}
        If we further assume that $M_{\beta \alpha} = 1$ for $\alpha \rightarrow \beta \in E$ with $\beta \neq \alpha$, then 
  \begin{align*}
        \Sigma_{i_1 i_2} &=  \sum_{ \tau \in \mathcal{T}(i_1, i_2)} \zeta^{\ell_{i_1} + \ell_{i_2} + 1}  \left(\frac{r}{2} \right )^{\ell_{i_1} + \ell_{i_2} + 1}  \binom{\ell_{i_1} + \ell_{i_2}}{\ell_{i_1}}\\ 
        \mathcal{K} _{i_1 \dots i_r} &= \sum_{ \tau \in \mathcal{T}(i_1, \dots, i_r)}  \zeta^{\sum_{j = 1}^r\ell_{i_j} +1} \frac{(\sum_{j = 1}^r\ell_{i_j} )!}{\prod_{j = 1}^r(\ell_{i_j}!) }.
    \end{align*}
\end{corollary}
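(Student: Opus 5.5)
This follows by specializing Lemma~\ref{lem::simplifiedtrekruleDAG}. Under the additional hypothesis that $M_{\beta\alpha}=1$ for every edge $\alpha\rightarrow\beta\in E$ with $\alpha\neq\beta$, every factor in each trek monomial $\omega(M,\tau)$ equals one. The treks in $\mathcal{T}(i_1,i_2)$ and $\mathcal{T}(i_1,\dots,i_r)$ contain no self-loops by definition, and $\mathcal{C}_2$ and $\mathcal{C}_r$ are the identity. So every edge appearing in $\tau$ is a non-loop edge of $G$, and its weight is $1$. Hence $\omega(M,\tau)=\prod_{\alpha\rightarrow\beta\in\tau}M_{\beta\alpha}=1$ for every such trek.

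I would check this directly from the expansion in the proof of Lemma~\ref{lem::simplifiedtrekruleDAG}. There the entries of $(M_{\mathrm{DAG}})^n_{i\alpha}$ are sums of walk monomials over directed paths of length $n$ from $\alpha$ to $i$. With unit weights, each path contributes exactly $1$. The weighted path counts therefore become plain path counts. These are then indexed by the treks $\tau$, with $\ell_{i_j}$ the length of the side of $\tau$ ending at $i_j$.

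Substituting $\omega(M,\tau)=1$ into the two formulas of Lemma~\ref{lem::simplifiedtrekruleDAG} gives exactly the stated expressions. The combinatorial coefficients depend only on $\zeta$, $r$ and the side lengths $\ell_{i_j}$, so they are unchanged. Both sums remain finite because $G$ is acyclic.

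Nothing here is a real obstacle. The only point to confirm is that $M$ still satisfies the lemma's hypotheses. It lies in $\mathbb{R}^E$, its diagonal entries equal $-1/(r\zeta)$, and it is lower triangular in the topological order. It is therefore stable for $\zeta>0$, so the integral representation of Proposition~\ref{prop::XuResult} used in the lemma applies.
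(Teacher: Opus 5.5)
Your proposal is correct and matches the paper, which treats the corollary as an immediate specialization of Lemma~\ref{lem::simplifiedtrekruleDAG}: with unit weights on all non-loop edges and identity $\mathcal{C}_2$, $\mathcal{C}_r$, every trek monomial $\omega(M,\tau)$ equals one, while the coefficients depending on $\zeta$, $r$ and the $\ell_{i_j}$ are unchanged. Your additional check is also sound: $M$ is lower triangular with diagonal $-1/(r\zeta)$, hence stable for $\zeta>0$, so the lemma's hypotheses hold.
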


In addition to the specific trek rule, the proof of Theorem~\ref{thm::GenericIdResult} 
also relies on the following sum identity.

\begin{lemma}
\label{lem::forsum}
    Let $d,q$ be non-negative integers such that $q \leq d-1$ and $r \geq 3$ an integer, then 
    \begin{equation}
    \label{eq::idforsumfordeterminant}
            \sum_{i = 0}^{d-1} \left(\frac{r}{2} \right)^{d-1-i} (-1)^{i} c(d,q,i)  = \left(\frac{r}{2} -1 \right)^{d-1} (-1)^{q},
    \end{equation}
where the coefficients are $$c(d,q,i) = \sum_{j = 0}^{i } (r-1)^{j} \binom{q}{j} \binom{d-1-q}{i-j}.$$    
\end{lemma}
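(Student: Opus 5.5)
The plan is to recognize $c(d,q,i)$ as a polynomial coefficient and evaluate the sum as a single polynomial value. We introduce a formal variable $x$ and set
\begin{equation*}
P(x) = \bigl(1+(r-1)x\bigr)^{q}\,(1+x)^{d-1-q}.
\end{equation*}
This is a polynomial of degree $d-1$, which is where the hypothesis $q \leq d-1$ is used. Expanding both factors by the binomial theorem and taking the Cauchy product of the coefficient sequences, the coefficient of $x^{i}$ is
\begin{equation*}
[x^i]P(x) = \sum_{j=0}^{i} \binom{q}{j}(r-1)^{j}\binom{d-1-q}{i-j} = c(d,q,i).
\end{equation*}
Binomial coefficients with out-of-range lower index vanish, so this holds for every $0 \leq i \leq d-1$.

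Next we write $a = r/2$, which is nonzero since $r \geq 3$. The left-hand side of \eqref{eq::idforsumfordeterminant} becomes
\begin{equation*}
\sum_{i=0}^{d-1} a^{d-1-i}(-1)^{i}c(d,q,i) = a^{d-1}\sum_{i=0}^{d-1} c(d,q,i)\Bigl(-\frac{1}{a}\Bigr)^{i} = a^{d-1}P(-1/a).
\end{equation*}
Distributing $a^{d-1} = a^{q}a^{d-1-q}$ over the two factors of $P$ gives
\begin{equation*}
a^{d-1}P(-1/a) = \bigl(a-(r-1)\bigr)^{q}\,(a-1)^{d-1-q}.
\end{equation*}

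Finally we substitute $a = r/2$. This gives $a-(r-1) = 1 - r/2 = -(r/2-1)$, so the first factor equals $(-1)^{q}(r/2-1)^{q}$. The second factor is $(r/2-1)^{d-1-q}$. Multiplying the two yields $(r/2-1)^{d-1}(-1)^{q}$, which is the claimed right-hand side.

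There is no real obstacle in this argument. The only point needing care is the bookkeeping in identifying $c(d,q,i)$ with $[x^i]P(x)$, namely checking the index ranges in the convolution. Everything after that is a direct substitution.
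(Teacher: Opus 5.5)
Your proof is correct. Because $q\le d-1$, $P$ is a polynomial of degree $d-1$, so the sum over $0\le i\le d-1$ picks up every coefficient. The identity $a^{d-1}P(-1/a)=(a-(r-1))^{q}(a-1)^{d-1-q}$ with $a=r/2$ then gives exactly $(r/2-1)^{d-1}(-1)^q$.

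The paper starts from the same observation, that $c(d,q,i)$ is the coefficient of $x^i$ in $(1+(r-1)x)^q(1+x)^{d-1-q}$. It uses this only to derive the recursion $c(d,q,i)=c(d,q-1,i)+(r-2)\,c(d-1,q-1,i-1)$. It then proves the identity by induction on $d+q$. The base case $q=0$ is handled by the binomial theorem, and the inductive step needs an index shift plus the boundary term $c(d-1,q-1,-1)=0$.

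You instead evaluate the generating polynomial at $x=-2/r$, and that single substitution replaces both the recursion and the induction. The paper's base case is just your computation with $q=0$. Your route is shorter and avoids the index bookkeeping. It also shows directly why the right-hand side factors as $(1-r/2)^{q}(r/2-1)^{d-1-q}$. The inductive argument gives nothing extra here.
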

\begin{proof}[]
We first prove the following recursive identity for the coefficients
\begin{equation*}
    c(d,q,i) = c(d, q-1, i) + (r-2)c(d-1, q-1, i-1). 
\end{equation*}

To prove this, note that $c(d,q,i)$ is the coefficient in front of $x^{i}$ if you expand the following polynomial in $x$ using the binomial theorem, $(1+(r-1)x)^{q} (1+x)^{d-1-q}$. Thus, $c(d,q-1,i)$ is the coefficient in front of $x^{i}$ in $(1+(r-1)x)^{q-1} (1+x)^{d-1-(q-1)}$ and $c(d-1, q-1, i-1)$ is the coefficient in front of $x^{i-1}$ in $(1+(r-1)x)^{q-1} (1+x)^{d-2-(q-1)}$. Hence, $c(d, q-1, i) + c(d-1, q-1, i-1)$ will be the coefficient in front of $x^{i}$ in 
\begin{align*}
    &(1+(r-1)x)^{q-1} (1+x)^{d-1-(q-1)} + (r-2)x (1+(r-1)x)^{q-1} (1+x)^{d-2-(q-1)}  \\
    &=  (1+(r-1)x)^{q-1}(1+x)^{d-2-(q-1)} (1+x +(r-2)x) \\
    &=  (1+(r-1)x)^{q} (1+x)^{d-1-q}. 
\end{align*}
Thus, the recursion formula is true. 

We now obtain the result about the summation by induction on $(d,q)$, where we induct over this set by what the sum $d + q$ is. 

The base case is $d = 2$ and $q = 0$. The proof is the same for any $d$, so we just prove the formula for any $d$ and $q = 0$. When $q$ is 0 we obtain 
\begin{align*}
    c(d,0,i) = \sum_{j = 0}^{i } (r-1)^{j} \binom{0}{j} \binom{d-1}{i-j} = \binom{d-1}{i},
\end{align*}
since the only non-zero contribution to the sum above is when $j = 0$. Inserting this into the left hand side of equation \eqref{eq::idforsumfordeterminant} we can prove the base case by application of the binomial formula
\begin{align*}
    \sum_{i = 0}^{d-1} \left(\frac{r}{2} \right)^{d-1-i} (-1)^{i} c(d,0,i) = \sum_{i = 0}^{d-1} \left(\frac{r}{2} \right)^{d-1-i} (-1)^{i} \binom{d-1}{i} = \left( \frac{r}{2} -1 \right)^{d-1}.  
\end{align*}


We now consider the induction step, so consider the lefthand side in equation \eqref{eq::idforsumfordeterminant} and apply the recursion formula to obtain
\begin{align*}
    &\sum_{i = 0}^{d-1} \left(\frac{r}{2} \right)^{d-1-i} (-1)^{i} c(d,q,i) \\ &= 
    \sum_{i = 0}^{d-1} \left(\frac{r}{2} \right)^{d-1-i} (-1)^{i} c(d,q-1,i) +
    (r-2)\sum_{i = 0}^{d-1} \left(\frac{r}{2} \right)^{d-1-i} (-1)^{i} c(d-1,q-1,i-1).  
\end{align*}
By the induction hypothesis the left sum is equal to $\left( r/2 - 1 \right)^{d-1} (-1)^{q-1}$. The rightmost sum can be written as
\begin{align*}
    &\sum_{i = 0}^{d-1} \left(\frac{r}{2} \right)^{d-1-i} (-1)^{i} c(d-1,q-1,i-1) \\ &= - \sum_{i = 0}^{d-1} \left(\frac{r}{2} \right)^{(d-2)-(i-1)} (-1)^{i-1} c(d-1,q-1,i-1) \\
    &= - \sum_{i = -1}^{d-2} \left(\frac{r}{2} \right)^{(d-2)-i} (-1)^{i} c(d-1,q-1,i) \\
    &= - \sum_{i = 0}^{d-2} \left(\frac{r}{2} \right)^{(d-2)-i} (-1)^{i} c(d-1,q-1,i) 
    - \left(\frac{r}{2} \right)^{d-1} c(d-1, q-1, -1) \\
    &= - \left(\frac{r}{2} -1 \right)^{d-2} (-1)^{q-1},
\end{align*}
by using the induction hypothesis and that $c(d-1, q-1, -1) = 0$. Inserting the expressions for the two sums we obtain
\begin{align*}
    \sum_{i = 0}^{d-1} \left(\frac{r}{2} \right)^{d-1-i} (-1)^{i} c(d,q,i) &= \left(\frac{r}{2} -1 \right)^{d-1} (-1)^{q-1} - (r-2)\left(\frac{r}{2} -1 \right)^{d-2} (-1)^{q-1} \\
    &=  \left(\frac{r}{2} -1 \right)^{d-1} (-1)^{q} \left( -1 + \frac{r-2}{r/2-1} \right) \\
    &=  \left(\frac{r}{2} -1 \right)^{d-1} (-1)^{q} \left(\frac{-r/2 + 1 + r-2}{r/2-1} \right) = 
    \left(\frac{r}{2} -1 \right)^{d-1} (-1)^{q},
\end{align*}
as we needed to show. 
\end{proof}

In the proof of Theorem \ref{thm::GenericIdResult} we will only need the general expression for the rows of $A_r(\mathcal{K} )$ when precisely two distinct subscripts appear, namely the rows $(i_1 i_2)$ and $(i_1 \dots  i_1 i_2)$ (and $(i_1  i_2 \dots i_2)$),  when $i_1 \neq i_2$. The explicit expression for these rows is

\begin{align}
\label{eq::A3KEntries}
        A_r(\mathcal{K} )_{(i_1 \dots i_1 i_2), (\alpha \rightarrow \beta)} = \begin{cases}
        0 & \mathrm{ if } \beta  \neq i_1, i_2\\
         (r-1) \mathcal{K} _{i_1 \dots i_1 i_2 \alpha} & \mathrm{ if } \beta = i_1\\
        \mathcal{K} _{i_1 \dots i_1 \alpha } & \mathrm{ if } \beta = i_2. \\
    \end{cases}
\end{align}
However, as discussed in Section \ref{sec:proof-reorganize}, the general expressions can easily be obtained as well. 

\begin{proof}[of Theorem \ref{thm::GenericIdResult}]

    Since we only consider diagonal $\mathcal{C}_2$ and $\mathcal{C}_r$, the linear system corresponding to the second and $r$-th-order Lyapunov equations from equation \eqref{eq::BigLinSys} reduces to 
    \begin{equation}
    \label{eq::ThmEquationSys}
        \begin{pmatrix}
            A_2(\Sigma)_{\mathrm{off}} & 0 & 0 \\ 
            A_r(\mathcal{K} )_{\mathrm{off}} & 0 & 0 \\
            A_2(\Sigma)_{\mathrm{diag}} & I_{d} & 0 \\
            A_r(\mathcal{K} )_{\mathrm{diag}} & 0 & I_{d} \\ 
        \end{pmatrix} 
        \begin{pmatrix}
            \mathrm{vec}(M) \\
            \mathrm{diag}(\mathcal{C}_2 )\\
             \mathrm{diag}(\mathcal{C}_r)\\
        \end{pmatrix}
         = 0,
    \end{equation}
    where $A_2(\Sigma)_{\mathrm{off}}$ and $A_r(\mathcal{K} )_{\mathrm{off}}$ denote all the rows indexed by the off-diagonal tensor entries of $A_2(\Sigma)$ and $A_r(\mathcal{K} )$, respectively. That is all the rows not indexed by $(ii)$ or $(i \dots i)$, and  $A_2(\Sigma)_{\mathrm{diag}}$ and $A_r(\mathcal{K} )_{\mathrm{diag}}$ denote the rows indexed by the diagonal entries. 
    
    Thus, the question of identifiability, i.e., whether this matrix in equation \eqref{eq::ThmEquationSys} has full rank minus 1, reduces to the question of whether the matrix  
    $$\begin{pmatrix}
            A_2(\Sigma)_{\mathrm{off}}\\ 
            A_r(\mathcal{K} )_{\mathrm{off}} 
        \end{pmatrix}$$ 
        has rank $d^2 -1$. However, this matrix is in general not square, 
        so we have to consider the determinant of a $(d^2 -1 ) \times (d^2 -1)$ submatrix. 

Any connected graph will have at least one polytree as a subgraph without the self-loops, which is a connected directed acyclic graph with exactly $d-1$ edges. Pick any such subpolytree and let 
 $G' = ([d], E')$ be the subgraph of $G$ where we have the edges in the polytree as well as all the self-loops. Thus, $G'$ will be a connected directed acyclic graph (disregarding the self-loops) with $d-1$ non-self-loop edges and all self-loops and be a subgraph of $G$. Firstly, we can without loss of generality assume that the nodes of $G'$ are ordered according to a topological order, $\leq$. We order any possible edges on $d$ nodes 
        according to a lexicographic ordering with the ordering on each coordinate the topological order, thus $(i \rightarrow j) \leq (i' \rightarrow j')$ if $i \leq i'$ or $i = i'$ and $j \leq j'$. According to this order the first element is $1 \rightarrow 1$, this is the column we will omit from the $(d^2-1) \times (d^2-1)$ submatrix we consider.

        We pick the following rows. We pick all of $A_2(\Sigma)_{\mathrm{off}}$ and of $A_r(\mathcal{K} )_{\mathrm{off}}$ we pick the rows of the form $(ij \dots j)$ with $i < j$ and $(i \dots ij)$ such that $i \rightarrow j \in E'$. By counting, these three groups each contribute with $d(d-1)/2$, $d(d-1)/2$ and $d-1$ rows, respectively, so in total this is exactly $d^2 -1 $ rows. We will denote this submatrix by $A$. 
        
    From now on when referring to rows of $A$ they will be $(ij)$, $(ij \dots j)$ or $(i \dots ij)$, which will implicitly inform whether it is a row coming from the 
    second- or $r$-th-order Lyapunov equation. The rest of the proof consists of showing that $A$ has full rank generically.

    As discussed in Section \ref{sec:proof-reorganize} we can prove that $A$ has full rank generically, that is, outside of a proper algebraic subset, by exhibiting a choice of parameters $(M, \mathcal{C}_2, \mathcal{C}_3)$ where $A$ has full rank, since the entries of $A$ can be written as rational functions in $M$, $\mathcal{C}_2$ and $\mathcal{C}_r$. 
    Let $\mathcal{C}_2$ and $\mathcal{C}_r$  be the identity matrix and identity $d \times \cdots \times d$ tensor, respectively and let $M \in \mathbb{R}^{E'}$ with diagonal $M_{ii} = -1/(r \zeta)$ and $M_{\beta \alpha} = 1$ for $\alpha \rightarrow \beta \in E'$ when $\alpha \neq \beta$. Since we picked $E'$ such that $G'$ (without the self-loops) is a polytree, this implies that we can use the polynomial expressions in $\zeta$ for the entries of $\Sigma$ and $\mathcal{K} $ given by Corollary \ref{cor::simplifiedTrekRuleDAGs}. 

   Thus, $\mathrm{det}(A)$ will be a polynomial in $\zeta$. The remaining idea of the proof is to show that this polynomial is not the zero polynomial. This is done by using Leibniz' formula for determinants to achieve an expression for the lowest degree non-vanishing term of $\mathrm{det}(A)$,
   \begin{equation}
   \label{eq::LeibnizFormula}
       \mathrm{det}(A) = \sum_{ \pi \in S_{d^2-1}} \mathrm{sgn}(\pi) \prod_{i = 0}^{d^2-1} A_{i \pi(i)}. 
   \end{equation}

    From Corollary \ref{cor::simplifiedTrekRuleDAGs} we see that a given non-zero entry of $A$ will have lowest degree in $\zeta$ equal to the total length of the shortest trek in $G'$ between the nodes indexing the entry of $\Sigma$ or $\mathcal{K} $ plus 1. Thus, only entries of $A$ equal to the diagonal entries $\Sigma_{ii}$ and $\mathcal{K} _{i \dots i}$ will have $\mathrm{low\ deg} ( \Sigma_{ii}) = \mathrm{low\ deg} ( \mathcal{K} _{i \dots i}) =  1 $, the lowest possible lowest degree. Similarly, the only way to obtain lowest degree equal to $2$ is the case where $i \neq j$ and $i \rightarrow j \in E'$, then $\mathrm{low\ deg} ( \Sigma_{ij}) = \mathrm{low\ deg} ( \mathcal{K} _{i \dots ij}) =  2 $, and $\mathrm{low\ deg} ( \mathcal{K} _{ij \dots j}) =  r $. For two distinct nodes with no edge between them, the lowest degree of $\Sigma_{ij}$ or $\mathcal{K} _{i \dots i j}$ will be at least three. 

  Equations \eqref{eq::A2SigmaEntries} and \eqref{eq::A3KEntries} yield that in row $(ij)$  there is a diagonal entry in the columns $(i \rightarrow j)$ and $(j \rightarrow i)$, and in row $(i \dots ij)$ there is a diagonal entry in column $(i \rightarrow j)$ and similarly for $(ij \dots j)$ it will only be in column $(j \rightarrow i)$. Thus, there will never be a diagonal entry in the self-loop columns in $A$ since we do not have the $(ii)$ or $(i \dots i)$ rows. 

    Now the claim is that it is possible to pick a permutation (there will be several) such that all non self-loop columns correspond to a diagonal entry and all self-loop columns will correspond to a $\Sigma_{ij}$ or $\mathcal{K} _{i \dots ij}$ with an edge $i \rightarrow j \in E'$. By the discussion of possible lowest degrees for entries of $\Sigma$ and $\mathcal{K} $ this is the term of lowest possible degree of the determinant of $A$ by Leibniz' formula \eqref{eq::LeibnizFormula}. 
     We now characterize precisely which permutations $\pi \in S_{d^2-1}$ obtain this degree. 

    We subdivide the rows and columns into the following four sets
    \begin{align*}
        \mathbf{R}_1 &=  \{ (ij) \; | \; i < j,\   i \rightarrow j \in E' \} \qquad   &\mathbf{C}_1 &= \{ (i \rightarrow j) \; | \; i \rightarrow j \in E' \} \\
        \mathbf{R}_2 &=  \{ (i \dots ij) \; | \; i < j,  \ i \rightarrow j \in E' \} \qquad &\mathbf{C}_2 &= \{(i \rightarrow i) \; | \; i \in [d] \setminus \{1 \} \}  \\ 
        \mathbf{R}_3 &=  \{ (ij) \; | \; i < j, \  i \rightarrow j \not \in E' \} \qquad  &\mathbf{C}_3 &= \{ (i \rightarrow j) \; | \; i < j, \ i \rightarrow j \not \in E' \}\\
        \mathbf{R}_4 &=  \{ (ij \dots j) \; | \; i < j \} \qquad  &\mathbf{C}_4 &= \{ (j \rightarrow i) \; | \; i < j \} .
    \end{align*}
    For the columns $\mathbf{C}_3$ the only row where it is possible to obtain a diagonal entry for $(i \rightarrow j) \in \mathbf{C}_3$ is exactly the matching row $ (ij) \in R_3$. Thus, any permutation $\pi \in S_{d^2-1}$ has to fix these rows to their corresponding columns. 

    Furthermore, for the rows $ (ij \dots j) \in \mathbf{R}_4$ the only possible way to obtain either $\mathcal{K} _{i \dots i}$ or $\mathcal{K} _{i \dots ij}$  is in columns $(j \rightarrow i)$ or $(i \rightarrow j)$, respectively. However, in column $i \rightarrow j$ it will always be possible to pick $\pi$ to obtain a diagonal entry from one of the other rows instead. Therefore, any permutation $\pi$ also has to fix the rows $\mathbf{R}_4$ to be mapped to the set of columns going against the topological order, $\mathbf{C}_4$. 

    By reordering the rows and columns (and only changing the determinant up to a sign) we can assume $\pi$ is just the identity permutation on  $\mathbf{R}_3 \cup \mathbf{R}_4$ to $\mathbf{C}_3 \cup \mathbf{C}_4$. By considering Leibniz' formula they will always contribute with a factor of 
    $$\left( \frac{r}{2} \right)^{d(d-1)/2 -(d-1)} \zeta^{d(d-1) -(d-1)}$$
    to the lowest degree term of the determinant. 

    We are left to consider the $2(d-1) \times 2(d-1)$ submatrix $A_{(\mathbf{R}_1 \cup \mathbf{R}_2), (\mathbf{C}_1 \cup \mathbf{C}_2)}$ with columns indexed by the edges in the polytree, $C_1$ and the self-loops except the one for the first source node, $\mathbf{C}_2$.
    Recalling the lexicographic ordering of the edges in $E'$, we will now order each of the sets of rows individually according to this as well and order the self-loops $\mathbf{C}_2$ \textit{not} according to the topological order of the nodes but according to when a node first appears in the lexicographically ordered edges $E' = \{ (e_{11} \rightarrow e_{12}, \dots, e_{(d-1) 1} \rightarrow e_{(d-1) 2} \}$, denote this $\{(i_2 \rightarrow i_2), \dots (i_d \rightarrow i_d) \}$. This is not guaranteed to coincide with the topological order, but it is possible, for example if $G'$ is a path. Thus, it is guaranteed that $i_2 = e_{12}$ and that $i_j$ will be at least $e_{(j-1) 1} $ or $e_{(j-1) 2}$. 
    So the matrix will be indexed like this:

{\footnotesize
\begin{center}
\begin{NiceTabular}[ cell-space-limits=4pt]{ c | c c c c c c}
  & $(e_{11} \rightarrow e_{12}) $ & $\cdots$ & $(e_{(d-1) 1} \rightarrow e_{(d-1) 2})$ & $(i_2 \rightarrow i_2)$ & $\cdots$ & $(i_{d} \rightarrow i_{d})$ \\
  \hline 
 $(e_{11} e_{12})$ & *  &  & & *  \\  
 $\vdots$ &  &    \\ 
 $(e_{(d-1)1} e_{(d-1)2})$ \\ 
 $(e_{11} \dots  e_{11} e_{12})$ \\ 
 $\vdots$ \\
 $(e_{(d-1)1} \dots e_{(d-1) 1} e_{(d-1)2})$
\end{NiceTabular}
\end{center}
}

To obtain the lowest possible degree, the first $d-1$ columns should all contribute with a diagonal entry in the product, and the last $d-1$ columns should contribute with a $\Sigma_{ij}$ or $ \mathcal{K} _{i \dots ij}$, potentially multiplied by $(r-1)$, with $i \rightarrow j \in E'$. For example, for the first row, the permutation has to map it to one of the columns marked with a '$*$' above . 

We number the rows and columns $1, \dots, 2(d-1)$ in the order explained above. For the first $d-1$ columns to correspond to diagonal entries, the permutation $\pi$ has to satisfy
\begin{align*}
    \pi(i) = i & \; \text{ if } \; i = 1, \dots,d-1 \qquad \text{ or } \qquad \pi(i) = i - (d-1) \; \text{ if } \; i = d, \dots, 2(d-1) ,
\end{align*}
for all $i = 1, \dots, 2(d-1)$.
In the first case we pick the possible diagonal entry in $A_2(\Sigma)$ and in the second case in $A_r(\mathcal{K} )$ and there are always exactly these two choices. They will contribute with $(r/2) \zeta$ and $\zeta$ to the lowest degree term, respectively. 

Then for the $(d-1)$ rows not picked out to have a diagonal entry, it has to be matched with a self-loop column instead. This is done consecutively, so first we consider either row $1$, $(e_{11} e_{12})$, or row $1 + (d-1)$, $(e_{11} \dots e_{11} e_{12})$, and whichever one was not picked for the first column  $e_{11} \rightarrow e_{12} $ has to be assigned column $e_{12} \rightarrow e_{12}$, since $e_{11} \rightarrow e_{11}$ was the column removed from the matrix, and so on. Depending on which $i$ was picked above the opposite happens now,
\begin{align*}
    \pi(i) = i +(d-1) & \text{ if } \; i = 1, \dots,d-1 \qquad \text{ or } \qquad \pi(i) = i  \text{ if }  \; i = d, \dots, 2(d-1),
\end{align*}
for all $i = 1..2(d-1)$. This will contribute with a factor $(r/2)^{2} \zeta^2$ or $\zeta^2$ (or $(r-1) \zeta^2$), respectively. 

If there is only one source in $G'$ each row with indices equal to those of edge $e_{i1} \rightarrow e_{i2} \in E'$, $(e_{i1} e_{i2})$ or $(e_{i1} \dots e_{i1} e_{i2})$ will be matched with the self-loop corresponding to the child, $e_{i2}$, when paired with a self-loop column
 because $e_{11} \rightarrow e_{11}$ is the only column not in the matrix. 

If there are multiple sources, then for all descendants of the first source $e_{11}$, the rows corresponding to the edge $e_{i1} \rightarrow e_{i2} \in E'$, $(e_{i1} e_{i2})$ or $(e_{i1} \dots e_{i1} e_{i2})$ will again always be matched with the child when matched with a self-loop column. However, for the nodes which are not descendants of the first source (so descendants of a different source and not also a descendant of the first source) the  rows corresponding to the edge $e_{i1} \rightarrow e_{i2} \in E'$, $(e_{i1} e_{i2})$ or $(e_{i1} \dots e_{i1} e_{i2})$ are matched with the self-loop of the parent $e_{i1}$, when matched up with a self-loop column. 

The paths starting at different sources in the graph than $e_{11}$ all have to converge with the path starting at $e_{11}$ because $G'$ is a tree. So the parent node is picked until you hit the spot where this path converges with the path which started at $e_{11}$. After the convergence point you then again always have to match a row with the child's self-loop as stated above. Such a convergence point must exist because $E'$ is a tree. However, the main point is that once it is decided which rows will be mapped to the first $d-1$ columns by $\pi$ everything else about $\pi$ is decided by going down the remaining rows in order. 

By the ordering of the rows and columns it further follows that $\pi$ will be a product of $2$-cycles. Since for each $i = 1, \dots, d-1$ either both 
\begin{equation*}
    \pi(i) = i \qquad \mathrm{ and } \qquad \pi(i + (d-1)) = i + (d-1)
\end{equation*}
or they result in a two cycle where the two are flipped
\begin{equation*}
    \pi(i) = i + (d-1) \qquad \mathrm{ and } \qquad \pi(i + (d-1)) = i. 
\end{equation*}

Thus, if $k$ is the number of the first $(d-1)$ first rows that are mapped to the first $(d-1)$ columns, then $\pi$ will be a product of $(d-1) - k$ two-cycles. Since we do not care about the sign of the determinant, the main point is that the permutations with even $k$ will always have the same sign and all the permutations with odd $k$ will always have the same sign, since a $2$-cycle has sign $-1$. 

One problem remains before the formula for the lowest degree term of the determinant of $A$, up to its sign, can be written. In the equation \eqref{eq::A3KEntries} for $A_r(\mathcal{K} )$  there can be a factor $r-1$ in front of $\mathcal{K} _{i \dots ij}$. This will happen exactly when in row $(e_{i1} \dots e_{i1} e_{i2})$ we pick the self-loop corresponding to the parent, i.e., $e_{i1} \rightarrow e_{i1}$. 
Let $q = 0, \dots, d-2$ denote the number of non-self-loop edges on the paths starting at a different source node than the first source node, $e_{11}$, until the convergence points with the path starting at $e_{11}$.
Then for each permutation $\pi$ such that we are in the first case above where $(ij)$ (so $A_2(\Sigma)$) contributes a diagonal entry and $i$ or $j$ is not a descendant of the first source node (so it is a descendant of another source before the convergence point) there will be a factor $r-1$ contributed by $A_r(\mathcal{K} )$ from the $(i \dots ij)$ row. So $q$ is the largest possible number of rows, which can contribute a $(r-1)$ factor. 


Thus, we can obtain the following expression for the lowest degree term using Leibniz rule by going through the permutations by how many indices $i$ where the first $(d-1)$ rows are mapped to the first $(d-1)$ columns. In other words, how many of the $A_2(\Sigma)$ rows are picked to contribute the diagonal entries. Thus, the sum over permutations can be written as a sum over $i$:



\begin{align*}
    \mathrm{low\ deg\ term}( \mathrm{det}(A_{(\mathbf{R}_1 \cup \mathbf{R}_2), (\mathbf{C}_1 \cup \mathbf{C}_2)})) = \zeta^{2(d-1)} \zeta^{(d-1)}  \sum_{i = 0}^{d-1}  \left(\frac{r}{2} \right)^{i}  \left( \left(\frac{r}{2} \right)^2 \right)^{d-1-i} (-1)^{i} c(d,q,i),
\end{align*}

where $$c(d,q,i) = \sum_{j = 0}^{i} (r-1)^{j} \binom{q}{j} \binom{d-1-q}{i-j}.$$ For a given number $i$ of $A_2(\Sigma)$-rows picked for diagonal entries, the sum $c(d,q,i)$ counts the added contribution by potentially having to pick a factor $(r-1)$, depending on the graph, in the $r$-th-order rows. The summation index $j$ counts the number of columns picked where an $(r-1)$ is included, and 
the binomial coefficient product counts the number of ways, potentially zero, that the columns can be picked such that exactly $j$ columns with a factor $(r-1)$ are picked. 
By Lemma \ref{lem::forsum}
\begin{align*}
    \mathrm{low\ deg\ term}( \mathrm{det}(A_{(\mathbf{R}_1 \cup \mathbf{R}_2), (\mathbf{C}_1 \cup \mathbf{C}_2)})) &= \zeta^{2(d-1)} \zeta^{(d-1)} \left(\frac{r}{2} \right)^{d-1}   \sum_{i = 0}^{d-1} \left(\frac{r}{2} \right)^{d-1-i} (-1)^{i} c(d,q,i) \\ 
    &=  \zeta^{2(d-1)} \zeta^{(d-1)} \left(\frac{r}{2} \right)^{d-1} \left(\frac{r}{2} -1 \right)^{d-1} (-1)^{q}. 
\end{align*}

Thus, the coefficient in front of the smallest degree term in the determinant of all of $A$ is not zero, it is 
$$\left(\frac{r}{2} \right)^{d-1} \left(\frac{r}{2} -1 \right)^{d-1} (-1)^{q} \left( \frac{r}{2} \right)^{d(d-1)/2 -(d-1)}.$$ 

The determinant of $A$ is therefore not the zero polynomial in $\zeta$, and there will exist a $\zeta$-value such that the corresponding $M$, $\mathcal{C}_2$ and $\mathcal{C}_r$ result in the determinant of $A$ being non-zero.
\end{proof}

\begin{remark}   
In the proof we show that the rank of $A$ is $d^2-1$ by removing the column corresponding to the self-loop of the first node in the topological ordering and showing this matrix has rank $d^2-1$. However, the proof will work by removing any of the self-loop columns. The only thing this will change is how to determine $q$, which counts the columns where it is possible to obtain a factor $(r-1)$. 
\end{remark}

\begin{remark}
From the matrix $A$ we could also remove columns corresponding to the zero entries of $M$ and only solve for the non-zero entries. This would not change the proof in any substantial way. It would only make the set of rows $R_3$ and $R_4$, and corresponding column sets $\mathbf{C_3}$ and $\mathbf{C_4}$, smaller, and their fixed contribution to the determinant later in the proof would be smaller as well. But no arguments change or depend on how many of the rows or columns in these sets are included as long as the only deleted rows 
and columns correspond to zero entries in $M$ given by the graph.  
\end{remark}

\begin{proof}[ of Corollary \ref{cor::RankConnectedComponents}]
     
     Let $S_1, \dots, S_m$ be the connected components of $G$. Let $\mathbf{C}_{S_i}$ and $\mathbf{R}_{S_i}$ denote all columns and rows, respectively where all indices  belong to the connected component $S_i$. Let $\mathbf{C}_{\mathrm{mix}}$ and $\mathbf{R}_{\mathrm{mix}}$ denote the rows and columns with indices in more than one component. 
    By definition of $A_2$ and $A_r$, see equations \eqref{eq::A2SigmaEntries} and \eqref{eq::generalAmatrixentry}, we see that $A_{\mathrm{off}}$ exhibits the following block structure if we organize the rows and columns by connected components
    \begin{equation*}
        A_{\mathrm{off}} = \begin{pNiceArray}{cccc}[first-row,first-col, nullify-dots]
       & \mathbf{C}_{S_1}    & \cdots &  \mathbf{C}_{S_m}       & \mathbf{C}_{\mathrm{mix}}  \\
\mathbf{R}_{S_1} \; \; & A_{\mathrm{off}}^{S_1} & 0 & \cdots & 0\\
\vdots \; \; \; \;    & 0 & \ddots &  & \vdots  \\
\mathbf{R}_{S_m} \; \;     & \vdots &  & A_{\mathrm{off}}^{S_m} & 0  \\
\mathbf{R}_{\mathrm{mix}}      & 0 & \cdots & 0 & A_{\mathrm{off}}^{\mathrm{mix}}  \\
\end{pNiceArray}. 
    \end{equation*}
The zero-pattern is explained as follows: if we consider $\alpha \rightarrow \beta \in S_i$ then $A_2(\Sigma)_{(kj), \alpha \rightarrow \beta} = 0$ if $k \not \in S_i$ or $j \not \in S_i$. Similarly, for the $r$-th-order Lyapunov equation rows. Explicitly, by equation \eqref{eq::generalAmatrixentry} we have that $A_r(\mathcal{K} )_{i_1, \dots, i_r, \alpha \rightarrow \beta}$ is given as some integer multiple of $\mathcal{K} _{\alpha i_2, \dots, i_r}$ given that $\beta = i_1$, and similarly if $\beta$ was equal to one of the other indices. Thus, an entry can only be non-zero if $\beta$ overlaps with $(i_1, \dots i_r)$, and there is a trek between $\alpha$ and the remaining $i$'s, so $\alpha$ and the remaining $i$'s need to belong to the same connected component.

By the block diagonal structure 
\begin{equation}
\label{eq::AndRankConnectedComp}
    \mathrm{rank} \left(A_{\mathrm{off}} \right) = \sum_{i = 1}^m\mathrm{rank} \left(A^{S_i}_{\mathrm{off}} \right) + \mathrm{rank} \left(A^{\mathrm{mix}}_{\mathrm{off}} \right).
\end{equation}
Each block, $A^{S_i}_{\mathrm{off}}$ can have at most rank  $|S_i| -1$ showing the first part of the Corollary, $\mathrm{rank} \left(A_{\mathrm{off}} \right) \leq d-m.$

By Theorem \ref{thm::GenericIdResult}, $\mathrm{rank}(A^{S_i}_{\mathrm{off}}) = |S_i| -1$ generically. This would be enough for the m-dimensional identifiability of the graph with $m$ connected components. However, it is not enough in terms of understanding the exceptional set for the connected graphs. 

To show that $A^{\mathrm{mix}}_{\mathrm{off}}$ has full rank we notice that the square submatrix only indexed by the $r$-th-order off-diagonal rows of the form $(i \dots ij)$ and all columns in $\mathbf{C}_{\mathrm{mix}}$ has full rank. This is seen by noting that this matrix exhibits a block diagonal structure. For each connected component $S_i$ and each $s \in S_i$ group the columns $t \rightarrow s$ and rows $(t \dots t s)$ for $t \in [d] \setminus S_i$. The argument for the zero-pattern is completely analogous to that for $A_{\mathrm{off}}$ by either using no overlap in the sink and indices of the rows or when there is a matching a lack of trek between the source and the remaining indices. 

Let $s \in S_i$ for a given connected component and number the elements in $t_1, \dots t_l \in [d] \setminus S_i$, then the block on the diagonal corresponding to $s$ has the form 
\begin{equation*}
    (D_s)_{(t_i \dots t_i s), (t_j \rightarrow s)} = \mathcal{K} _{t_i \dots t_i t_j}. 
\end{equation*}
Thus, if we pick $M$ to be diagonal, each $D_s$ will become diagonal with $\mathcal{K} _{t_1 \dots t_1}, \dots, \mathcal{K} _{t_l \dots t_l}$ showing that each $D_s$ generically has full rank. Thus, the entire square matrix generically has full rank, as does $A^{\mathrm{mix}}_{\mathrm{off}}$. 
Combing this we obtain by equation \eqref{eq::AndRankConnectedComp}
\begin{equation*}
     \mathrm{rank}(A_{\mathrm{off}}) = d^2 - m,
\end{equation*}
so the rank drops by exactly the number of connected components generically. 
\end{proof}

\appendixtwo

\section{Appendix -- additional proofs and supplementary material}

\subsection{Proofs about the estimator} \label{sec::Proofsabouttheest}

\begin{proof}[ of Theorem \ref{thm::AsympoticsSingularValueEst}]
By the law of large numbers, the empirical moments up to order $k$ converge in probability to the true moments.
Since the cumulants can be obtained by a continuous, and smooth, transformation of the moments, see, e.g., Section 2.4.3 by \cite{mccullagh2018tensor} for an explicit formula we can apply the continuous mapping theorem, Theorem 2.3 by \cite{Vaart_1998}, to obtain that the empirical cumulants also converge in probability to the true cumulants. As the matrix $A$ is assumed to have entries equal to cumulants, up to scaling by a real number, it follows by another use of the continuous mapping theorem that
$$\hat{A}_n \xrightarrow{P} A \text{ for } n \rightarrow \infty.$$

By definition, $\nu_{\mathrm{min}}(\hat{M})$ can be found as the unit eigenvector of $\hat{A}^T \hat{A}$ corresponding to the smallest eigenvalue of $\hat{A}^T \hat{A}$ picked such that $\mathrm{tr}(\hat{M}) < 0$. Let $G$ denote this function. Then $G$ is continuous and smooth in an open neighborhood of the true $A$. This holds since $G$ is the composition of two maps, which are smooth in a neighborhood of $A$. 
Clearly, the map $X \mapsto X^T X$ is smooth. 
The unit eigenvector map for a simple eigenvalue of a real symmetric matrix, $X_0$, is also smooth
in a neighborhood of $X_0$, see Theorem~1 by \cite{Magnus_1985}. To apply this result for $A^TA$ we note that we have assumed $\mathrm{ker}(A) = \mathrm{span}(\mathrm{vec}(M))$, so the 
smallest eigenvalue of $A^TA$ is $0$, and since the kernel is one-dimensional it is a simple eigenvalue. 
Thus, 
we can apply the continuous mapping theorem to conclude that $$\hat{M}_n \xrightarrow{P} M \text{ for } n \rightarrow \infty.$$

If the distribution, additionally, has finite $2k$-th-order moment, asymptotic normality follow by an application of the delta method. By the central limit theorem, the empirical moments up to order $k$ will be asymptotically normally distributed. Thus, since the cumulants can be obtained by a differentiable function of the moments it follows by the delta method that the empirical cumulants are also asymptotically normally distributed, as written in equation \eqref{eq::AsymptoticNormalityCumulants}. As $A = \mathcal{A} \kappa$ for $\mathcal{A}$ a fixed matrix, we obtain
\begin{align*}
    \sqrt{n}(\mathrm{vec}(\hat{A}_n - A)) & \xrightarrow{D} \mathcal{N} \left(0, \mathcal{A} \Omega \mathcal{A}^T \right)
\end{align*}
for $n \rightarrow \infty$ by use of the delta method. 

As argued above $M$ and $\hat{M}$ are obtained by applying the function $G$ to $A$ and $\hat{A}$, 
respectively, and since $G$ is smooth in a neighborhood of the true $A$, it follows by the delta method that 
$$
\sqrt{n}(\mathrm{vec}(\hat{M}_n - M))  \xrightarrow{D} \mathcal{N} \left(0,  D G (A)  \mathcal{A}  \Omega  \mathcal{A}^T (DG(A))^T  \right)
$$
for $n \rightarrow \infty$, where $DG(A)$ denotes the Jacobian evaluated in $A$. By Lemma \ref{lem::JacobianOfSingularVector}, we have 
$$
    DG(A) = - \mathrm{vec}(M)^T \otimes A^{+},
$$
and we obtain the asymptotic covariance matrix specified in the theorem. 
\end{proof}

We derive the Jacobian of a vectorized version of a map of matrices. This is done using notation and conventions from Chapter 6 by \cite{magnus_neudecker_2019}.

\begin{lemma} \label{lem::JacobianOfSingularVector}
    Let $G: \mathbb{R}^{b \times d^2} \rightarrow \mathbb{R}^{d^2}$ denote the function mapping a matrix $X$ to $v_{\min}(X)$, the right unit singular vector of the smallest singular value of $X$. Consider a matrix $A \in \mathbb{R}^{b \times d^2}$ whose smallest singular value is simple and equal to zero with right singular vector denoted $\mathrm{vec}(M)$. Then in a neighborhood of $A$, $G$ is a smooth function with Jacobian evaluated in $A$ given by 
    \begin{equation}
        DG(A) = \left. \frac{\partial \mathrm{vec}(G)}{\partial \mathrm{vec}(X)^T} \right |_{X = A}  = - \mathrm{vec}(M)^T \otimes A^{+},
    \end{equation}
    where $A^{+}$ denotes the Moore-Penrose inverse. 
\end{lemma}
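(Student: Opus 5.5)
We prove Lemma \ref{lem::JacobianOfSingularVector} by writing $G$ as the unit eigenvector of $X^TX$ for its smallest eigenvalue, then differentiating the eigen-equation implicitly.

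\emph{Smoothness.} Write $G = g \circ h$ with $h(X) = X^T X$. Here $g$ sends a symmetric matrix near $A^TA$ to the unit eigenvector of its simple smallest eigenvalue, with the sign fixed by continuity (equivalently by the trace convention). By assumption $0$ is a simple eigenvalue of $A^TA$, with eigenvector $v_0 = \mathrm{vec}(M)$. Theorem~1 of \cite{Magnus_1985} then gives that $\lambda(\cdot)$ and $g$ are smooth near $A^TA$. Since $h$ is polynomial, $G$ is smooth near $A$.

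\emph{Differentiating the eigen-equation.} Start from $X^TX\, v = \lambda v$ with $v^Tv=1$ and differentiate at $X=A$, where $v=v_0$ and $\lambda=0$:
\begin{equation*}
(\mathrm{d}X^T A + A^T \mathrm{d}X)v_0 + A^TA\,\mathrm{d}v = \mathrm{d}\lambda\, v_0, \qquad v_0^T\mathrm{d}v = 0 .
\end{equation*}
Because $Av_0=0$, the term $\mathrm{d}X^T A v_0$ vanishes. Multiplying on the left by $v_0^T$ gives $\mathrm{d}\lambda = v_0^T A^T \mathrm{d}X v_0 = (Av_0)^T\mathrm{d}X v_0 = 0$. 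The equation reduces to
\begin{equation*}
A^TA\,\mathrm{d}v = -A^T\mathrm{d}X\, v_0 .
\end{equation*}

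\emph{Solving for $\mathrm{d}v$.} Apply $(A^TA)^+$. Since $v_0^T\mathrm{d}v=0$, $\mathrm{d}v$ lies in $\mathrm{ker}(A)^\perp = \mathrm{range}(A^TA)$, so $(A^TA)^+A^TA\,\mathrm{d}v = \mathrm{d}v$. Using the identity $(A^TA)^+A^T = A^+$, we get
\begin{equation*}
\mathrm{d}v = -A^+\,\mathrm{d}X\, v_0 .
\end{equation*}

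\emph{Vectorizing.} The rule $\mathrm{vec}(PQR) = (R^T\otimes P)\mathrm{vec}(Q)$ from \cite{magnus_neudecker_2019} gives
\begin{equation*}
\mathrm{d}v = -(v_0^T \otimes A^+)\,\mathrm{d}\,\mathrm{vec}(X).
\end{equation*}
By the identification theorem of \cite{magnus_neudecker_2019}, $DG(A) = -\mathrm{vec}(M)^T\otimes A^+$.

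The main obstacle is justifying the inversion step: $A^TA$ is singular, so we need to confirm that the normalization forces $\mathrm{d}v\perp v_0$ and that the pseudoinverse identity holds. The sign convention only selects a smooth branch and does not change the derivative.
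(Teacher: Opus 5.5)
Your proof is correct and follows essentially the same route as the paper. Both arguments write $G$ as the unit-eigenvector map of $X^TX$, invoke Theorem~1 by \cite{Magnus_1985} for smoothness, and use $A\,\mathrm{vec}(M)=0$ together with $(A^TA)^{+}A^T=A^{+}$ to reach $-\mathrm{vec}(M)^T\otimes A^{+}$. The difference is only in the computation: the paper quotes Magnus's eigenvector Jacobian $-\mathrm{vec}(M)^T\otimes (A^TA)^{+}$ and chains it with $DF(A)=(A^T\otimes I)K+I\otimes A^T$, where the commutation-matrix term vanishes because $A\,\mathrm{vec}(M)=0$, whereas you differentiate $X^TXv=\lambda v$ directly and vectorize only at the end, which amounts to re-deriving Magnus's formula in this special case.
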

\begin{proof}
    We first note that the smallest singular value and its corresponding right singular vector of a matrix $X$ can be found as the corresponding smallest eigenvalue and eigenvector of $X^TX$. Thus, $G$ can be viewed as the composition of maps $\mathrm{eig}_{\mathrm{min}} \circ F$, where $F$ maps a matrix $X$ to $X^T X$ and $\mathrm{eig}_{\mathrm{min}}$ maps a matrix to the unit 
    eigenvector of its smallest eigenvalue. We compute the Jacobian of each of these maps.
    Theorem 1 by \cite{Magnus_1985} gives that the Jacobian of $\mathrm{eig}_{\mathrm{min}}$ 
    at $A^TA$ is given as 
    $$D\mathrm{eig}_{\mathrm{min}}(A^TA) = \left. \frac{\partial \mathrm{eig}_{\mathrm{min}}}{\partial(\mathrm{vec}(X))^T} \right|_{X = A^TA} = -\mathrm{vec}(M)^T \otimes (A^TA)^{+},$$ 
    since the smallest eigenvalue is zero with eigenvector $\mathrm{vec}(M)$.
    
    By letting $\varepsilon > 0$ and $Y$ any matrix of the same dimension as $X$ we can write
    \begin{align*}
        \mathrm{vec}(F(X + \varepsilon Y)) &= \mathrm{vec} \left((X + \varepsilon Y)^T(X + \varepsilon Y)\right) \\
        &=\mathrm{vec}(X^TX) + \varepsilon \mathrm{vec}\left(Y^T X + X^T Y \right)  + \varepsilon^2 \mathrm{vec}( Y^T Y ) \\
        &= \mathrm{vec}(X^TX) + \varepsilon ((X^T \otimes I)K + I \otimes X^T) \mathrm{vec}(Y)  + \varepsilon^2 \mathrm{vec}( Y^T Y ),
    \end{align*}
    by using rules of vectorization and Kronecker products, see Chapter 2 by \cite{magnus_neudecker_2019}, and where $K$ denotes the commutation matrix that switches the vectorization of a matrix with the vectorization of its transpose. From this we can conclude that 
    $$DF(A) = \left. \frac{\partial F}{\partial(\mathrm{vec}(X))^T} \right|_{X = A} = (A^T \otimes I)K + I \otimes A^T.$$ 
    Thus, by the chain rule we obtain the following expression for the Jacobian of $G$ at $A$
    \begin{align*}
        DG(A) = \left. \frac{\partial \mathrm{vec}(G)}{\partial \mathrm{vec}(X)^T} \right|_{X = A}
        &= D\mathrm{eig}_{\mathrm{min}}(A^TA) DF(A) \\
        &= -\mathrm{vec}(M)^T \otimes (A^TA)^{+} ((A^T \otimes I)K + I \otimes A^T) \\ 
        &= -(\mathrm{vec}(M)^T A^T \otimes (A^TA)^{+})K -\mathrm{vec}(M)^T \otimes (A^TA)^{+} A^T \\
        &= -\mathrm{vec}(M)^T \otimes (A^TA)^{+} A^T = -\mathrm{vec}(M)^T \otimes A^{+},
    \end{align*}
    by using that $A \mathrm{vec}(M) = 0$. 
\end{proof}

\begin{proof}[of Corollary \ref{cor::RewriteAsymptoticCov}]
We rewrite
$$\left(\mathrm{vec}(M)^T \otimes A^{+} \right) \mathcal{A}$$
in a way to avoid forming the large matrices $\mathrm{vec}(M)^T \otimes A^{+}$ and $\mathcal{A}$. The columns of $\mathcal{A}$ are indexed by the unique vectorizations of the second- and $r$-th-order cumulants $\Sigma_{11}, \dots, \Sigma_{dd}, \mathcal{K}_{1 \cdots 1}, \dots, \mathcal{K}_{d \cdots d}$. We let the second- and r-th-order cumulants jointly be given by $\kappa$ and let $j$ index this set. If we consider column $j$ of $\mathcal{A}$, $\mathcal{A}_{*,j}$ as being written as the vectorization of a $D \times d^2$ matrix. Then this matrix 
can be seen to be $A$ evaluated at the point where 
every cumulant except $j$ is equal to zero and $j = 1$. If we let $\kappa_j = 1$ denote the vector of second- and $r$-th-order cumulants where the $j$-th is set to one and the remaining to zero, we obtain that 
$$\mathcal{A}_{*,j} = \mathrm{vec} (A(\kappa_j = 1)).$$

Then 
\begin{equation*}
    \left(\mathrm{vec}(M)^T \otimes A^{+} \right) \mathcal{A}_{*,j} = A^{+} A(\kappa_j = 1) \text{vec}(M),
\end{equation*}
and therefore, 
\begin{equation*}
    \left(\mathrm{vec}(M)^T \otimes A^{+} \right) \mathcal{A}  = A^{+} \left[ A(\kappa_1 = 1) \mathrm{vec}(M) \cdots A(\kappa_q = 1) \mathrm{vec}(M) \right]. 
\end{equation*}
Each column $A(\kappa_j = 1) \mathrm{vec}(M)$ is by definition of $A$ the off-diagonal Lyapunov equation, of order $2$ and $r$ combined, evaluated at $M$, and the cumulant $\kappa_j$ for which the $j$-th-order cumulant is one and the rest are zero. Thus, this matrix is just equal to the block diagonal matrix
\begin{equation*}
    \left[ A(\kappa_1 = 1) \mathrm{vec}(M) \cdots A(\kappa_q = 1) \mathrm{vec}(M) \right] = B_{2,r}(M) = \begin{pmatrix}
        B_2(M) & 0 \\
        0 & B_r(M) \\
    \end{pmatrix},
\end{equation*}
with $B_k(M)$ defined by equation \eqref{eq::Br(M)vec}, so we obtain that 
\begin{equation*}
\left(\mathrm{vec}(M)^T \otimes A^{+} \right) \mathcal{A} = A^{+} B_{2,r}(M),
\end{equation*}
which finishes the proof. 
\end{proof}

\subsection{General Lyapunov models and trek formulas}
\label{Appendix::treks} 
For our main results we assume that the continuous Lyapunov model has diagonal cumulants 
of the driving Lévy process. To encode a more general sparsity pattern of the cumulant we 
introduce graphs with blunt edges, as was done in covariance case by \cite{varando20a}. In 
this appendix we will provide this definition for any order cumulant of a continuous 
Lyapunov model and derive general trek formulas in this setup. 

\begin{definition}
\label{def::Compatiblewithgraph}
Let $G = ([d], E,B)$ be a $k$-th-order mixed graph with $E$ denoting the set of directed edges ($\rightarrow$) and $B$  denoting the $k$-fold blunt edges ($\unEdge$). Let 
\begin{align*}
     \mathbb{R}^{E} &= \{ M \in \mathbb{R}^{d \times d} \; | \; M_{ij} = 0 \; \mathrm{ if } \; j \rightarrow i \not \in E \} \\
     \mathrm{Sym}^k(\mathbb{R}^d)^B &= \{ C \in \mathrm{Sym}^k(\mathbb{R}^d) \; | \; C_{i_1, \dots, i_k} = 0 \; \mathrm{ if } \; (i, \dots, i_k) \not \in B \}.
\end{align*}
If $(M, \mathcal{C}_k) \in \mathbb{R}^{d \times d} \times \mathrm{Sym}^k(\mathbb{R}^d)$ satisfy that $M \in \mathbb{R}^{E}$ and $\mathcal{C}_k \in \mathrm{Sym}^k(\mathbb{R}^d)^B$ we will say that they are \textit{compatible} with the mixed graph $G$. 
\end{definition}

In Figure \ref{fig::GeneralModel} there is an example of a second-order mixed graph and corresponding zero-pattern of $(M, \mathcal{C}_2)$. The directed edges are directed and drawn in black and the blunt edges are drawn in blue and undirected, so the order the nodes are written in is irrelevant. 
    \begin{figure}[H]
    \centering
\begin{center}
    \begin{tikzpicture}
      \node[circle, draw, minimum size=0.5cm] (1) at (-1.5,0) {1};
      \node[circle, draw, minimum size=0.5cm] (2) at (0,1) {2};
      \node[circle, draw, minimum size=0.5cm] (3) at (0,-1) {3};
      \node[circle, draw, minimum size=0.5cm] (4) at (1.5,0) {4};
      
      \draw[->,  >=stealth] (1) edge[loop above] (1);
      \draw[->,  >=stealth] (2) edge[loop above] (2);
      \draw[->,  >=stealth] (3) edge[loop below] (3);
      \draw[->,  >=stealth] (4) edge[loop above] (4);
      \draw[->,  >=stealth] (1) edge (3);
      \draw[->,  >=stealth] (4) edge (2);
      \draw[->,  >=stealth] (3) edge (4);
      \draw[->,  >=stealth] (2) edge [bend right] (3);
      \draw[->,  >=stealth] (3) edge[bend right] (2);

      \draw[edgeblunt, shorten <=1pt, shorten >=1pt, loop left, blue] (1) to (1);
      \draw[edgeblunt, shorten <=1pt, shorten >=1pt, loop left, blue] (2) to (2);
      \draw[edgeblunt, shorten <=1pt, shorten >=1pt, loop left, blue] (3) to (3);
      \draw[edgeblunt, shorten <=1pt, shorten >=1pt, loop right, blue] (4) to (4);
      \draw[edgeblunt, shorten <=1pt, shorten >=1pt, blue] (1) edge (2);
      \draw[edgeblunt, shorten <=1pt, shorten >=1pt, bend right, blue] (3) to (4);
      \draw[edgeblunt, shorten <=1pt, shorten >=1pt, bend left = 60, blue] (3) to (1);
      \draw[edgeblunt, shorten <=1pt, shorten >=1pt, bend left = 35, blue] (2) to (4);

    \node at (5, 0) {$
    M = \left( 
    \begin{array}{cccc}
     \;  * \; \; \; & 0 \; \; \; & 0 \; \; \; & 0 \; \;  \\
     \; 0 \; \; \; & * \; \; \; & * \; \; \; & * \; \;  \\
     \;  * \; \; \; & * \; \; \; & * \; \; \; & 0 \; \; \\
    \;  0 \; \; \; & 0 \; \; \; & * \; \; \; & * \; \; \\
    \end{array}
    \right) 
    $};

     \node at (9, 0) {$
    \mathcal{C}_2 = \left( 
    \begin{array}{cccc}
     \;  * \; \; \; & * \; \; \; & * \; \; \; & 0 \; \;  \\
     \; * \; \; \; & * \; \; \; & 0 \; \; \; & * \; \;  \\
     \;  * \; \; \; & 0 \; \; \; & * \; \; \; & * \; \; \\
    \;  0 \; \; \; & * \; \; \; & * \; \; \; & * \; \; \\
    \end{array}
    \right) 
    $};
      
    \end{tikzpicture} 
    \end{center}
    \caption{Example of a second-order mixed graph $G = ([4], E,B)$ and corresponding zero-pattern of pair $(M, \mathcal{C}_2)$ compatible with $G$.}  
    \label{fig::GeneralModel}
\end{figure}
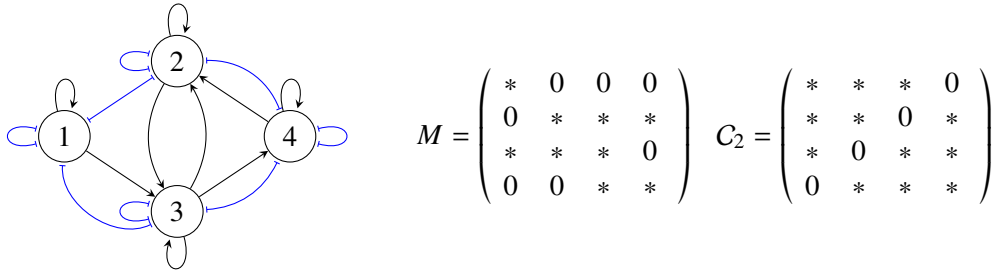

Since the steady-state solution to equation \eqref{eq::SDErep} only exists if the drift matrix $M$ is stable, we let $\mathbb{R}^{E}_{\mathrm{stab}}$ denote the subset of $\mathbb{R}^{E}$ that consists of stable matrices as in the main paper. Furthermore, some cumulants, e.g., the covariance, satisfy positive definiteness, so when such further positivity constraints are applicable, this should also be included in the model definitions. 
As in the main paper, we will always assume that the directed part of the graph $G = ([d], E)$ contains all self-loops. 

We can define the continuous Lyapunov model of a certain order $k$ by the set of distributions in $\mathcal{P}_G$, where $G$ represents only the directed part of the mixed graph, and where the Lévy process has finite $k$-th-order moment and the $k$-th-order cumulant of the Lévy process is compatible with the blunt edges in $G$, i.e., $\mathcal{C}_k \in \mathrm{Sym}^k(\mathbb{R}^d)^B$. As in the main paper, we will also consider the finite-dimensional set of cumulants of order $k$, which can arise in this way, as solutions to the $k$-th-order continuous Lyapunov equation with $(M, \mathcal{C}_k)$ corresponding to a mixed graph. 
\begin{definition}
Let $G = ([d], E, B)$ be a $k$-th-order mixed graph on $d$ nodes. Then the continuous $k$-th-order Lyapunov cumulant model is the set of $k$-th-order tensors in the set
\begin{equation*}
    \mathcal{M}^{k}_{G} = \{ \mathcal{K} \in \mathrm{Sym}^k(\mathbb{R}^d) \; | \; \mathcal{K}  \times_1 M + \cdots \mathcal{K}  \times_k M + \mathcal{C}_k = 0 \text { for } M \in \mathbb{R}^{E}_{\mathrm{stab}} \text{ and } \mathcal{C}_k \in \mathrm{Sym}^k(\mathbb{R}^d)^B \}. 
\end{equation*}
\end{definition}
In the following we will derive two different general trek rules for the $k$-th-order Lyapunov equation.

When considering the integral equation \eqref{eq::generalSol} for the  solution to the $k$-th-order Lyapunov equation there are at least two different options of how to derive a trek formula from it. It depends upon whether you are willing to make an assumption on the spectral radius of $M$ or not. The first option is to consider
\begin{equation} \label{eq::K(s)eq}
    \mathcal{K} (s) = \int_{0}^{s} \mathcal{C}_k \times_1 e^{M t} \times_2 e^{M t} \cdots \times_{k} e^{M t} \mathrm{d} t,
\end{equation}
which clearly satisfies $\mathrm{lim}_{s \rightarrow \infty} \mathcal{K} (s) = \mathcal{K} $, and then obtain  a trek formula expression for $\mathcal{K} (s)$ as is done in the 
case $k=2$ by \cite{varando20a}.
The other option requires an assumption which allows one to exchange integration and sums, as is done for $k=2$ by \cite{hansen2024trekrulelyapunovequation}. Before we proceed with either, we will first give a general definition of a $k$-trek. 
\begin{definition}
    Let $G = ([d],E,B)$ be a $k$-th-order mixed graph. A $k$-trek $\tau$ between the nodes $i_1, \dots, i_k$ consists
    of the following: a $k$-fold blunt edge $((j_1)_1, \dots ,(j_k)_1)$ in the graph $G$ and $k$ directed walks in $G$
    \begin{align*}
        & \pi_1 :  (j_1)_1 \rightarrow (j_1)_2 \cdots \rightarrow i_1 \\ 
        &\vdots \\ 
        & \pi_k: (j_n)_1 \rightarrow (j_n)_2 \cdots \rightarrow i_k. 
    \end{align*}
    Let $\ell_{i_m}(\tau)$ denote the length of the walk going to $i_m$ and $\mathcal{T}(i_1, \dots, i_k)$ denote the set-up of all treks between the nodes $i_1, \dots, i_k$. 
\end{definition}
Note that each of the walks is allowed to have length $0$. If the blunt part of the graph only contains the self-loop blunt edges (corresponding to the $\mathcal{C}$ being diagonal) we can also  define the trek from only a directed graph, because in this case the top nodes have to be the same in all the paths constituting the trek. 

\begin{example}
    We can compare the 2-treks in the graph in Figure \ref{fig::ZeroPatternEx} and Figure \ref{fig::GeneralModel}, where the directed part of the graph is the same, but the blunt edges are different. In Figure \ref{fig::ZeroPatternEx} there are not drawn, which corresponds there only being the self-loop blunt edges and $\mathcal{C}_2$ diagonal. We only consider the simple 2-treks meaning the ones without directed loops, since otherwise there are infinitely many but they all have to arise from adding loops to one of the simple treks. We provide two examples of pairs of nodes and the treks between them. If the top of the trek is a self-loop blunt edge $v \unEdge v$ we do by convention not write the blunt edge but just the given node in the middle of the trek as $\leftarrow v \rightarrow$. 

    \textbf{Treks between 1 and 2:} In Figure \ref{fig::ZeroPatternEx} there are none, in Figure \ref{fig::GeneralModel} there are two simple treks
    \begin{align*}
        1 \unEdge 2, \ 1 \unEdge 3 \rightarrow 2. 
    \end{align*}

\textbf{Treks between 2 and 4:} In Figure \ref{fig::ZeroPatternEx} we have
\begin{align*}
    2 \leftarrow 4, \ 2 \rightarrow 3 \rightarrow 4, \ 2 \leftarrow 3 \rightarrow 4, 
    \ 2 \leftarrow 3 \leftarrow 1 \rightarrow 3 \rightarrow 4.
\end{align*}
In Figure \ref{fig::GeneralModel} there are all the same as in Figure \ref{fig::ZeroPatternEx} as well as all the following
\begin{align*}
    &2 \unEdge 4, \\
    &2 \unEdge 1 \rightarrow 3 \rightarrow 4, 2 \leftarrow 3 \leftarrow 1 \unEdge 2 \rightarrow 3 \rightarrow 4,  \\ 
    &2 \leftarrow 3 \unEdge 1 \rightarrow 3 \rightarrow 4, 2 \leftarrow 4 \leftarrow 3 \leftarrow 1 \unEdge 3 \rightarrow 4, 2 \leftarrow 3 \unEdge 1 \rightarrow 3 \rightarrow 4, 2 \leftarrow 4 \leftarrow 3 \unEdge 1 \rightarrow 3 \rightarrow 4 \\
    &2 \leftarrow 3 \unEdge 4, 2 \leftarrow 4 \leftarrow 3 \unEdge 4, 2 \leftarrow 4 \unEdge 3 \rightarrow 4. 
\end{align*}

\end{example}

In order to prove the trek formulas, we provide the definition of a trek polynomial. 

\begin{definition}
Let $G = ([d],E,B)$ be a $k$-th-order mixed graph and let $(M,\mathcal{C})$ be a pair of a $d\times d$ stable matrix and $k$-th-order tensor that is compatible with $G$ then for $\tau$ a $k$-trek with the blunt edge 
$((j_1)_1, \dots, (j_n)_1)$ as its center, we define 
\begin{align*}
    \omega(M,C,\tau) = \mathcal{C}_{(j_1)_1, \dots, (j_k)_1} \prod_{m = 1}^k \prod_{\alpha \rightarrow \beta \in \pi_m} M_{\beta \alpha}.  
\end{align*}
This is called the trek polynomial corresponding to the trek $\tau$. 
\end{definition}    

We will prove two different trek rules. In both cases, we write the proof for $k = 3$ for notational convenience, it is easy to see from the 
proof how it generalizes to general $k$ and we will write this result as well. 

\begin{proposition}
\label{prop::ThirdOrderTrekwithS}
    Let $G = ([d],E,B)$ be a third-order mixed graph and let $(M,\mathcal{C})$ be a pair of a $d\times d$ stable matrix and $\mathcal{C}$ third-order tensor that is compatible with $G$ then for $\mathcal{K}(s)$ as defined by equation \eqref{eq::K(s)eq} (for $k = 3$) we obtain 
    \begin{align*}
        (\mathcal{K} (s))_{i_1 i_2 i_3} =  \sum_{\tau \in \mathcal{T}(i_1, i_2, i_3)} 
    \frac{s^{\ell_{i_1}(\tau)+\ell_{i_2}(\tau)+\ell_{i_3}(\tau)+1}}{\ell_{i_1}(\tau)! \ell_{i_2}(\tau)! \ell_{i_3}(\tau)! (\ell_{i_1}(\tau)+\ell_{i_2}(\tau)+\ell_{i_3}(\tau)+1)}   
    \omega(M,\mathcal{C},\tau).
    \end{align*}
\end{proposition}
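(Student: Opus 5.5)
The plan is to expand the matrix exponential under the finite integral, exactly as in the proof of Lemma \ref{lem::simplifiedtrekruleDAG}, but now without any spectral splitting of $M$. Since the integration range $[0,s]$ is compact, no assumption on the spectral radius is needed. Starting from \eqref{eq::K(s)eq} with $k=3$, we write $e^{Mt}=\sum_{n\ge 0} t^n M^n/n!$ in each of the three modes. By multilinearity of the mode products we get
\begin{equation*}
\mathcal{K}(s) = \int_0^s \sum_{n,m,p\ge 0} \frac{t^{n+m+p}}{n!\,m!\,p!}\, \mathcal{C}\times_1 M^n \times_2 M^m \times_3 M^p \,\mathrm{d}t .
\end{equation*}
The triple series converges absolutely and uniformly on $[0,s]$. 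Each entry is bounded in absolute value by $\|\mathcal{C}\|_\infty d^3 \sum \frac{(t\|M\|)^{n+m+p}}{n!m!p!} = \|\mathcal{C}\|_\infty d^3 e^{3t\|M\|}$, with $\|\cdot\|$ a submultiplicative norm dominating the entries. This justifies exchanging the sum and the integral, for instance by dominated convergence or by uniform convergence on a compact interval. Integrating term by term gives $\int_0^s t^{n+m+p}\,\mathrm{d}t = s^{n+m+p+1}/(n+m+p+1)$.

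Next we identify the coefficient tensor entrywise with a sum over treks. Writing out the $(i_1,i_2,i_3)$ entry gives
\begin{equation*}
\big(\mathcal{C}\times_1 M^n\times_2 M^m\times_3 M^p\big)_{i_1i_2i_3}=\sum_{j_1,j_2,j_3}\mathcal{C}_{j_1j_2j_3}(M^n)_{i_1j_1}(M^m)_{i_2j_2}(M^p)_{i_3j_3}.
\end{equation*}
The entry $(M^n)_{i j}$ expands as $\sum M_{i\,a_{n-1}}\cdots M_{a_1 j}$ over sequences $j=a_0,a_1,\dots,a_n=i$. Each nonzero summand corresponds to a directed walk $j\to a_1\to\cdots\to i$ of length $n$ in $G$, because $M$ is compatible with $E$; self-loops are allowed and give the diagonal factors. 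Its value is $\prod_{\alpha\to\beta}M_{\beta\alpha}$. Likewise $\mathcal{C}_{j_1j_2j_3}\neq 0$ only when $(j_1,j_2,j_3)\in B$. Hence the entry equals the sum of $\omega(M,\mathcal{C},\tau)$ over all $3$-treks $\tau$ with walks of lengths exactly $(n,m,p)$ ending at $i_1,i_2,i_3$.

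Summing over $(n,m,p)$ partitions $\mathcal{T}(i_1,i_2,i_3)$ by the length triple $(\ell_{i_1}(\tau),\ell_{i_2}(\tau),\ell_{i_3}(\tau))$. Substituting $n=\ell_{i_1}(\tau)$ and so on into $s^{n+m+p+1}/(n!\,m!\,p!\,(n+m+p+1))$ yields the stated formula. The resulting (generally infinite) trek series converges absolutely, since it is a regrouping of the absolutely convergent series above.

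The main obstacle is bookkeeping rather than analysis. One must check that walks are counted with multiplicity as ordered vertex sequences, including self-loops, so that the correspondence between terms of the matrix-power expansion and treks is a bijection. One must also check that the blunt-edge center matches the index convention of $\mathcal{C}$, where the $m$-th walk starts at the $m$-th coordinate of the blunt edge, consistent with the mode-$m$ product. For general $k$ the same argument applies verbatim with $k$ walks, and the factor becomes $s^{\sum\ell+1}/(\prod\ell!\,(\sum\ell+1))$.
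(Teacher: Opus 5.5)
Your proposal is correct and follows the paper's proof essentially step for step: expand $e^{Mt}$ in each mode, interchange sum and integral, integrate $t^{n+m+p}$ over $[0,s]$, and identify the entry of $\mathcal{C}\times_1 M^n\times_2 M^m\times_3 M^p$ with the sum of trek monomials having walk lengths $(n,m,p)$. Your justification of the interchange via uniform convergence on the compact interval $[0,s]$ is cleaner than the paper's appeal to stability of $M$, which is not needed here; one small fix is that absolute convergence of the ungrouped trek series should come from applying your bound to the entrywise absolute values $|M|$ and $|\mathcal{C}|$, since regrouping an absolutely convergent series does not by itself give this.
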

\begin{proof}
    First we consider the general expression for $\mathcal{K}(s)$. We obtain the following by using the series expansion for the matrix exponential
    and using that we can exchange the sum and the integral since $M$ is stable so the infinite sum is convergent. 
    \begin{align*}
        \mathcal{K} (s) &= \int_{0}^{s} \mathcal{C} \times_1 e^{M t} \times_2 e^{M t} \times_3  e^{M t} \mathrm{d} t \\ 
               &=  \int_{0}^{s} \mathcal{C} \times_1 \sum_{n = 0}^{\infty} \frac{t^n}{n!} M^n \times_2 
               \sum_{m = 0}^{\infty} \frac{t^m}{m!} M^m \times_3  \sum_{k = 0}^{\infty} \frac{t^k}{k!} M^k \mathrm{d} t \\ 
               &= \sum_{n = 0}^{\infty} \sum_{m = 0}^{\infty} \sum_{k = 0}^{\infty} \int_{0}^{s} \mathcal{C} \times_1 \frac{t^n}{n!} M^n 
               \times_2 \frac{t^m}{m!} M^m \times_3 \frac{t^k}{k!} M^k \mathrm{d} t \\ 
               &= \sum_{n = 0}^{\infty} \sum_{m = 0}^{\infty} \sum_{k = 0}^{\infty} \int_{0}^{s} \frac{t^{n+m+k}}{n! m! k!}  
               \mathcal{C} \times_1  M^n \times_2 M^m \times_3  M^k \mathrm{d} t \\ 
               &= \sum_{n = 0}^{\infty} \sum_{m = 0}^{\infty} \sum_{k = 0}^{\infty}  \frac{s^{n+m+k+1}}{n! m! k! (n+m+k+1)} 
               \mathcal{C} \times_1  M^n \times_2 M^m \times_3  M^k
    \end{align*}
    To derive an expression for the $(i_1 i_2  i_3)$-th entry we need an expression for the $(i_1 i_2  i_3)$-th  entry of $\mathcal{C}\times_1~M^n\times_2~M^m\times_3~M^k$. By definition
\begin{align*}
    (\mathcal{C} \times_1  M^n \times_2 M^m \times_3  M^k)_{i_1 i_2 i_3 } &= \sum_{\alpha = 1}^{d} 
    \sum_{\beta = 1}^{d} \sum_{\gamma = 1}^{d} 
    \mathcal{C}_{\alpha \beta \gamma} \cdot (M^{n})_{i_1 \alpha} \cdot (M^{m})_{i_2 \beta} \cdot (M^{k})_{i_3 \gamma}. 
\end{align*} 
Since $M$ is an adjacency matrix $(M^{n})_{i_1 \alpha}$ is the sum of all walk polynomials of directed walks in $G$ from $\alpha$ 
to $i_1$ of length $n$. Thus, the sum above will be the sum of the trek polynomials over all treks between $i_1, i_2$ and $i_3$ where the three walks going to 
$i_1$, $i_2$ and $i_3$ have lengths $n$, $m$ and $k$, respectively. Combining this with the expression above where we sum over all 
$n,m$ and $k$ we obtain 
\begin{equation*}
    (\mathcal{K} (s))_{i_1 i_2 i_3} =  \sum_{\tau \in \mathcal{T}(i_1, i_2, i_3)} 
    \frac{s^{\ell_{i_1}(\tau)+\ell_{i_2}(\tau)+\ell_{i_3}(\tau)+1}}{\ell_{i_1}(\tau)! \ell_{i_2}(\tau)! \ell_{i_3}(\tau)! (\ell_{i_1}(\tau)+\ell_{i_2}(\tau)+\ell_{i_3}(\tau)+1)}   
    \omega(M,\mathcal{C},\tau), 
\end{equation*}
as we wanted to prove. 
\end{proof}

\begin{proposition} 
    Let $G = ([d],E,B)$ be a $k$-th-order mixed graph and let $(M,\mathcal{C})$ be a pair of a $d\times d$ stable matrix and $\mathcal{C}$ $k$-th-order tensor that is compatible with $G$ then for $\mathcal{K}(s)$ as defined by equation~\eqref{eq::K(s)eq} we obtain 
\begin{align*}
(\mathcal{K} (s))_{i_1 \dots i_k} =  \sum_{\tau \in \mathcal{T}(i_1, i_2, \dots ,i_n)} 
\frac{s^{ \sum_{j = 1}^k \ell_{i_j}(\tau) +1}}{ \prod_{j = 1}^{k} \ell_{i_j}(\tau)! \left(\sum_{j = 1}^k \ell_{i_j}(\tau)+1 \right)}   
\omega(M,\mathcal{C},\tau).  
\end{align*}
\end{proposition}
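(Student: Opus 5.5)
The plan is to repeat the argument of Proposition \ref{prop::ThirdOrderTrekwithS} with $k$ mode products instead of three. We start from the definition \eqref{eq::K(s)eq} and expand each of the $k$ matrix exponentials $e^{Mt} = \sum_{n \geq 0} t^n M^n/n!$. On the compact interval $[0,s]$ these series converge absolutely and uniformly, since $\sum_n t^n \|M\|^n/n! \leq e^{s\|M\|}$; stability is not even needed here. Because the mode products are multilinear, the $k$-fold product of series becomes a $k$-fold sum. Interchanging sum and integral is then justified by dominated convergence, with dominating function $\|\mathcal{C}\| e^{k s \|M\|}$. This gives
\begin{equation*}
\mathcal{K}(s) = \sum_{n_1, \dots, n_k \geq 0} \frac{s^{n_1 + \cdots + n_k + 1}}{n_1! \cdots n_k! \, (n_1 + \cdots + n_k + 1)} \, \mathcal{C} \times_1 M^{n_1} \times_2 \cdots \times_k M^{n_k},
\end{equation*}
where we have used $\int_0^s t^{N}\,\mathrm{d}t = s^{N+1}/(N+1)$ with $N = \sum_j n_j$.

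Next we read off the $(i_1 \dots i_k)$ entry. By the definition of the $n$-mode product,
\begin{equation*}
\left(\mathcal{C} \times_1 M^{n_1} \cdots \times_k M^{n_k}\right)_{i_1 \dots i_k} = \sum_{\alpha_1, \dots, \alpha_k} \mathcal{C}_{\alpha_1 \dots \alpha_k} \prod_{j=1}^k (M^{n_j})_{i_j \alpha_j}.
\end{equation*}
The term $\mathcal{C}_{\alpha_1 \dots \alpha_k}$ vanishes unless $(\alpha_1, \dots, \alpha_k) \in B$, by compatibility. The entry $(M^{n_j})_{i_j \alpha_j}$ is the sum over directed walks of length $n_j$ from $\alpha_j$ to $i_j$ in $G$ of $\prod M_{\beta\alpha}$ over the walk's edges. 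This holds because $M_{\beta\alpha} = 0$ whenever $\alpha \to \beta \notin E$, and it includes self-loops and repeated edges. Consequently each summand corresponds bijectively to a $k$-trek $\tau$ with blunt top $(\alpha_1,\dots,\alpha_k)$ and leg lengths $\ell_{i_j}(\tau) = n_j$, and its value is exactly $\omega(M,\mathcal{C},\tau)$.

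Regrouping the absolutely convergent multiple series over treks, with the lengths determined by $\tau$, gives the stated formula. The only point needing care, and I expect it to be the main obstacle, is the bookkeeping that the correspondence between tuples $(n_1,\dots,n_k,\alpha,\text{walks})$ and $k$-treks is a bijection. In particular, walks must be ordered tuples indexed by the positions $i_1,\dots,i_k$, so that each trek is counted once with its own length profile. Rearranging the series is legitimate because everything converges absolutely; the bound is $\sum_\tau |\omega| \, s^{\sum \ell + 1}/\prod \ell! \leq s\, \|\mathcal{C}\|_{\max} d^k e^{k s \|M\|_{\max} d}$ or similar.
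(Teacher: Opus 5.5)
Your proposal is correct and follows the paper's own route: the paper proves the $k=3$ case (Proposition \ref{prop::ThirdOrderTrekwithS}) by expanding the matrix exponentials, interchanging sum and integral, integrating $t^{N}$ over $[0,s]$, and reading off the entry as a sum over walks from the tops to $i_1,\dots,i_k$; for general $k$ it only says the argument carries over. Your justification of the interchange via uniform convergence on $[0,s]$ (which, as you note, does not need stability), and your absolute-convergence bound for regrouping the possibly infinite sum over treks, are more careful than the paper's appeal to stability.
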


\begin{remark}
    The primary usefulness of this result is that the $k$-th-order cumulant $(\mathcal{K} (s))_{i_1 \dots i_k} = 0$ if there is no $k$-trek between $i_1, \dots, i_k$ and by taking the limit this also applies to $\mathcal{K} $. 
\end{remark}

If we make additional assumptions on $M$ we will be allowed to interchange the sum and limit operation to obtain a trek formula for the cumulant directly as is done for $k=2$ by \cite{hansen2024trekrulelyapunovequation}. For any $M$ we can decompose it by subtracting a scaled version of the identity matrix, $I_{d \times d}$,
\begin{equation}
    \label{eq::Mdecomp}
    M = \Lambda - s \cdot I_{d \times d}. 
\end{equation}
One way to choose $s$ would be to choose it such that $s \cdot I_{d \times d}$ equals one or more of the diagonal elements of $M$. Then there is one less non-zero entry in $\Lambda$ than in $M$, which might  make the trek formulas simpler, because there will be fewer treks. Another choice would be $s = 1$.
Then as long as $M_{ii} \neq -1$, $\Lambda$ and $M$ would have the same zero-patterns and therefore the same treks. We will let $\omega(\Lambda, \mathcal{C}, \tau)$ denote the trek polynomial for $\Lambda$ independently of whether it will be the same as for $M$. In the case of $s = 1$, this yields the following trek rule. 
\begin{proposition}
\label{prop::LambdaTrekRuleThird}
    Let $G = ([d],E,B)$ be a third-order mixed graph and let $(M,\mathcal{C})$ be a pair of a $d\times d$ stable matrix and a $k$-th-order tensor that is compatible with $G$, 
    and let $\Lambda$ be defined by equation \eqref{eq::Mdecomp}. 
    If $\Lambda$ has spectral radius strictly less than $1$, then for $\mathcal{K}  \in \mathcal{M}^3_{G}$

    \begin{align*}
        (\mathcal{K} )_{i_1 i_2 i_3} = \sum_{\tau \in \mathcal{T}(i_1, i_2, i_3)} \omega(\Lambda, \mathcal{C}, \tau)  \frac{3^{-\ell_{i_1}(\tau)-\ell_{i_2}(\tau)-\ell_{i_3}(\tau)-1}  \Gamma(\ell_{i_1}(\tau)+\ell_{i_2}(\tau)+\ell_{i_3}(\tau)+1)}{\ell_{i_1}(\tau)! \ell_{i_2}(\tau)! \ell_{i_3}(\tau)!}. 
    \end{align*}
\end{proposition}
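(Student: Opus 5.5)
We follow the same route as the proof of Proposition~\ref{prop::ThirdOrderTrekwithS} and of Lemma~\ref{lem::simplifiedtrekruleDAG}, but now start from the full integral representation of Proposition~\ref{prop::XuResult} rather than the truncated $\mathcal{K}(s)$. With $s=1$ in \eqref{eq::Mdecomp} we have $M = \Lambda - I_{d\times d}$, and since $I_{d\times d}$ commutes with $\Lambda$, $e^{Mt} = e^{-t} e^{\Lambda t}$. Pulling the three scalar factors through the mode products gives
\begin{equation*}
\mathcal{K} = \int_0^\infty e^{-3t}\, \mathcal{C} \times_1 e^{\Lambda t} \times_2 e^{\Lambda t} \times_3 e^{\Lambda t} \,\mathrm{d}t .
\end{equation*}
We then expand each exponential in its power series, $e^{\Lambda t} = \sum_{n\ge 0} t^n \Lambda^n / n!$. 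Using multilinearity of the mode products, the integrand becomes the triple sum
\begin{equation*}
\sum_{n,m,k} \frac{t^{n+m+k}}{n!\,m!\,k!}\, e^{-3t}\, \mathcal{C}\times_1 \Lambda^n \times_2 \Lambda^m \times_3 \Lambda^k .
\end{equation*}

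The main step is to justify exchanging the triple sum with the integral over $[0,\infty)$. This is where the hypothesis that $\Lambda$ has spectral radius $\rho(\Lambda) < 1$ enters. Choose $\rho(\Lambda) < \eta < 1$ and a submultiplicative norm with $\|\Lambda^n\| \le C \eta^n$ for all $n$ (for instance by Gelfand's formula). The absolute series is then dominated by
\begin{equation*}
C^3 \|\mathcal{C}\| \sum_{n,m,k} \frac{(\eta t)^{n+m+k}}{n!\,m!\,k!} e^{-3t} = C^3 \|\mathcal{C}\|\, e^{-3(1-\eta)t},
\end{equation*}
which is integrable on $[0,\infty)$. Tonelli/Fubini therefore permits the interchange. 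We expect this to be the only genuine obstacle: without the spectral radius condition the termwise integrals still exist, but the resulting series need not converge.

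After the interchange, each term is the scalar integral
\begin{equation*}
\int_0^\infty t^{N} e^{-3t}\,\mathrm{d}t = 3^{-N-1}\Gamma(N+1), \qquad N = n+m+k,
\end{equation*}
multiplied by $\mathcal{C}\times_1 \Lambda^n \times_2 \Lambda^m \times_3 \Lambda^k/(n!\,m!\,k!)$. Finally, as in the proof of Proposition~\ref{prop::ThirdOrderTrekwithS}, the $(i_1 i_2 i_3)$ entry is
\begin{equation*}
\sum_{\alpha,\beta,\gamma} \mathcal{C}_{\alpha\beta\gamma} (\Lambda^n)_{i_1\alpha}(\Lambda^m)_{i_2\beta}(\Lambda^k)_{i_3\gamma}.
\end{equation*}
Because $(\Lambda^n)_{i\alpha}$ is the sum of walk monomials of length $n$ from $\alpha$ to $i$ in the graph of $\Lambda$, and nonzero $\mathcal{C}_{\alpha\beta\gamma}$ correspond to blunt edges in $B$, this entry equals the sum of $\omega(\Lambda,\mathcal{C},\tau)$ over the $3$-treks $\tau$ with leg lengths $(n,m,k)$. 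Regrouping the absolutely convergent sum by treks then gives the stated formula, with $N = \ell_{i_1}(\tau)+\ell_{i_2}(\tau)+\ell_{i_3}(\tau)$. The regrouping is legitimate by the same domination bound.
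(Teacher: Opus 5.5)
Your proposal is correct and is essentially the paper's proof. You substitute $M=\Lambda-I_{d\times d}$ to extract $e^{-3t}$, expand the exponentials, interchange sum and integral, evaluate $\int_0^\infty t^Ne^{-3t}\,\mathrm{d}t=3^{-N-1}\Gamma(N+1)$, and read off $3$-treks as in Proposition~\ref{prop::ThirdOrderTrekwithS}; your domination by $C^3\|\mathcal{C}\|e^{-3(1-\eta)t}$ supplies the justification for the interchange that the paper only asserts.

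Tone down your final claim that the regrouping follows from the same bound. That bound controls $\|\Lambda^n\|$, not the sums of absolute walk monomials, which are entries of $|\Lambda|^n$ for the entrywise absolute value $|\Lambda|$. These can grow like $\rho(|\Lambda|)^n$ with $\rho(|\Lambda|)>1>\rho(\Lambda)$ (e.g.\ $\Lambda=\bigl(\begin{smallmatrix}0.6&0.6\\-0.6&0.6\end{smallmatrix}\bigr)$). So the trek sum must be read as grouped by leg lengths $(n,m,k)$, each group being finite, rather than as an absolutely convergent sum over individual treks.
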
   

\begin{proof}
    We obtain the following using the definition of $\Lambda$, the matrix exponential and the gamma distribution, and that we can interchange the sum and integral 
    because of the spectral radius assumption. 
    \begin{align*}
        \mathcal{K}  &= \int_{0}^{\infty} \mathcal{C} \times_1 e^{M t} \times_2 e^{M t} \times_{3} e^{M t} \mathrm{d} t \\
        &= \int_{0}^{\infty} \mathcal{C} \times_1 e^{(\Lambda - I_{d \times d}) t} \times_2 e^{(\Lambda -I_{d \times d}) t}  \times_{3} e^{(\Lambda - I_{d \times d}) t} \mathrm{d} t \\
        &= \int_{0}^{\infty} e^{-3 t } \cdot \mathcal{C} \times_1 e^{\Lambda t} \times_2 e^{\Lambda t}  \times_{3} e^{\Lambda t} \mathrm{d} t \\
        &= \int_{0}^{\infty} e^{-3 t } \cdot \mathcal{C} \times_1 \sum_{n = 0}^{\infty} \frac{t^n}{n!} \Lambda^n \times_2 
        \sum_{m = 0}^{\infty} \frac{t^m}{m!} \Lambda^m \times_3  \sum_{k = 0}^{\infty} \frac{t^k}{k!} \Lambda^k \mathrm{d} t \\ 
        &= \sum_{n = 0}^{\infty} \sum_{m = 0}^{\infty} \sum_{k = 0}^{\infty} \int_{0}^{\infty} \frac{t^{n+m+k} e^{-3t}}{n! m! k!}  
        \mathcal{C} \times_1  \Lambda^n \times_2 \Lambda^m \times_3  \Lambda^k \mathrm{d} t \\ 
        &=  \sum_{n = 0}^{\infty} \sum_{m = 0}^{\infty} \sum_{k = 0}^{\infty} \frac{3^{-n-m-k-1}  \Gamma(n+m+k+1)}{n! m! k!} 
        \mathcal{C} \times_1  \Lambda^n \times_2 \Lambda^m \times_3  \Lambda^k. 
    \end{align*}
We take the $(i_1 i_2 i_3)$-th entry and use the same derivation as in the proof of Proposition \ref{prop::ThirdOrderTrekwithS} to obtain an expression for  $\mathcal{C} \times_1  \Lambda^n \times_2 \Lambda^m \times_3  \Lambda^k$
by $3$-treks to obtain that 
\begin{align*}
    (\mathcal{K} )_{i_1 i_2 i_3} = \sum_{\tau \in \mathcal{T}(i_1, i_2, i_3)} \omega(\Lambda, \mathcal{C}, \tau)  \frac{3^{-\ell_{i_1}(\tau)-\ell_{i_2}(\tau)-\ell_{i_3}(\tau)-1}  \Gamma(\ell_{i_1}(\tau)+\ell_{i_2}(\tau)+\ell_{i_3}(\tau)+1)}{\ell_{i_1}(\tau)! \ell_{i_2}(\tau)! \ell_{i_3}(\tau)!},
\end{align*}
as we wanted to prove. 
\end{proof}

The above calculation can easily be carried out for a general decomposition of $M$ as given in equation \eqref{eq::Mdecomp} to obtain a trek rule that might be more useful in specific cases. An example of this is Lemma \ref{lem::simplifiedtrekruleDAG}. The trickiest part in terms of using this trek rule is to ensure that $\Lambda$ has spectral radius strictly less than 1, which is something that can be more easily ensured when $M$ is lower-triangular, i.e., the graph is a DAG with self-loops, as in Lemma \ref{lem::simplifiedtrekruleDAG}.
Below is the generalization for higher-order $k$. 
\begin{proposition}
\label{prop::LambdaTrekRuleGeneral}
  Let $G = ([d],E,B)$ be a $k$-th-order mixed graph and let $(M,C)$ be a pair of a $d\times d$ stable matrix and a $k$-th-order tensor that is compatible with $G$, 
    and let $\Lambda$ be defined by equation \eqref{eq::Mdecomp}. 
    If $\Lambda$ has spectral radius strictly less than $1$, then for $\mathcal{K}  \in \mathcal{M}^k_{G}$
    \begin{align*}
        \mathcal{K} _{i_1 \dots i_k} = \sum_{\tau \in \mathcal{T}(i_1, i_2, \dots, i_k)} \omega(\Lambda, \mathcal{C}, \tau)  \frac{k^{-\sum_{j = 1}^k \ell_{i_j}(\tau)-1}  \Gamma \left(\sum_{j = 1}^k \ell_{i_j}(\tau)+1 \right)}{ \prod_{j = 1}^{k}\ell_{i_j}(\tau)! }. 
    \end{align*}
\end{proposition}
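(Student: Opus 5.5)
The plan is to generalize the proof of Proposition~\ref{prop::LambdaTrekRuleThird} from three to $k$ mode products. The only genuinely new point is the justification for interchanging sum and integral. By Proposition~\ref{prop::XuResult}, since $M$ is stable, the unique solution $\mathcal{K}\in\mathcal{M}^k_G$ is
\[
\mathcal{K}=\int_0^\infty \mathcal{C}\times_1 e^{Mt}\times_2\cdots\times_k e^{Mt}\,\mathrm{d}t .
\]
With $M=\Lambda-I_{d\times d}$ and $I$ commuting with $\Lambda$, we have $e^{Mt}=e^{-t}e^{\Lambda t}$. Multilinearity of the mode products then pulls out the scalar $e^{-kt}$, leaving $\int_0^\infty e^{-kt}\,\mathcal{C}\times_1 e^{\Lambda t}\cdots\times_k e^{\Lambda t}\,\mathrm{d}t$.

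Next, expand each $e^{\Lambda t}=\sum_{n\ge 0}t^n\Lambda^n/n!$. Multilinearity gives a $k$-fold series with terms
\[
\frac{t^{n_1+\cdots+n_k}e^{-kt}}{n_1!\cdots n_k!}\,\mathcal{C}\times_1\Lambda^{n_1}\cdots\times_k\Lambda^{n_k}.
\]
To integrate term by term, I would use Tonelli or dominated convergence on the absolute entrywise series. Choose a submultiplicative norm and use $\|\Lambda^n\|\le C\rho'^n$ for some $\rho(\Lambda)<\rho'<1$, which is possible by Gelfand's formula. The absolute series is then dominated by $\|\mathcal{C}\|C^k e^{-kt}e^{k\rho' t}$, which is integrable because $\rho'<1$. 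This is the step where the spectral radius hypothesis enters, and it is the main (though mild) obstacle: the hypothesis is exactly what makes the dominating exponential decay.

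Integrating each term with the Gamma integral $\int_0^\infty t^{N}e^{-kt}\,\mathrm{d}t=k^{-N-1}\Gamma(N+1)$, where $N=\sum_j n_j$, yields
\[
\mathcal{K}=\sum_{n_1,\dots,n_k}\frac{k^{-N-1}\Gamma(N+1)}{\prod_j n_j!}\,\mathcal{C}\times_1\Lambda^{n_1}\cdots\times_k\Lambda^{n_k}.
\]

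Finally, take the $(i_1\dots i_k)$ entry, which equals
\[
\sum_{\alpha_1,\dots,\alpha_k}\mathcal{C}_{\alpha_1\dots\alpha_k}\prod_j(\Lambda^{n_j})_{i_j\alpha_j}.
\]
As in Proposition~\ref{prop::ThirdOrderTrekwithS}, $(\Lambda^{n})_{i\alpha}$ is the sum of walk monomials over directed walks of length $n$ from $\alpha$ to $i$. Compatibility means $\mathcal{C}_{\alpha_1\dots\alpha_k}=0$ unless $(\alpha_1,\dots,\alpha_k)\in B$, and $\Lambda$ is supported on $E$ (self-loops included). Hence each summand corresponds bijectively to a $k$-trek $\tau$ with $\ell_{i_j}(\tau)=n_j$, with monomial $\omega(\Lambda,\mathcal{C},\tau)$. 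Regrouping the absolutely convergent series by treks gives the stated formula.
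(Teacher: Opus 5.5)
Your proposal is correct and is essentially the paper's argument: the paper writes out the case $k=3$ in Proposition~\ref{prop::LambdaTrekRuleThird} (integral representation, $e^{Mt}=e^{-t}e^{\Lambda t}$, series expansion, Gamma integral, identification with treks) and says the general $k$ case is analogous, and your Gelfand-formula domination makes rigorous the interchange of sum and integral that the paper only asserts. One small caveat, shared with the paper: $\rho(\Lambda)<1$ gives absolute convergence of the series indexed by length profiles $(n_1,\dots,n_k)$, but not necessarily of the series over individual treks (that holds, e.g., if the entrywise absolute value $|\Lambda|$ has spectral radius below $1$, and can fail otherwise), so the trek sum should be read as grouped by length profiles rather than as a regrouping of an absolutely convergent series.
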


\subsection{Additional identifiability results}
\label{Appendix::AddtionalIDResults}
In this section we provide an additional identifiability result for continuous Lyapunov models. 
Contrary to Theorem \ref{thm::GenericIdResult}, we assume that a given cumulant of order at least three of the Lévy process is known, as explored by \cite{Dettling:2023} in the covariance case. In this case we show generic identifiability of $M$ for any graph with all self-loops. 

\begin{proposition}
    \label{prop::ThirdOrderID}
    Let $G = ([d], E)$  be any directed graph with all self-loops and $r \geq 3$ an integer then the $r$-th-order continuous Lyapunov model $\mathcal{M}^r_{G}$ where $\mathcal{C}_r$ is assumed known and diagonal with non-zero diagonal is generically identifiable. 
\end{proposition}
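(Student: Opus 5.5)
The plan is to reduce generic identifiability to a generic rank statement, as in the proof of Theorem~\ref{thm::GenericIdResult}, but now using only the $r$-th-order equation. With $\mathcal{C}_r$ known and diagonal, the vectorized equation \eqref{eq::VecThirdOrder} reads $A_r(\mathcal{K})\,\mathrm{vec}(M) = -\mathrm{vec}_u(\mathcal{C}_r)$. Restrict the columns to the edges in $E$ and write $A_r(\mathcal{K})_{E}$ for the resulting $\binom{d+r-1}{r}\times |E|$ matrix. This is an inhomogeneous linear system with a known right-hand side, so $M$ is uniquely determined as soon as $A_r(\mathcal{K})_E$ has full column rank $|E|$. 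Since $|E|\le d^2 \le \binom{d+r-1}{r}$ for $r\ge 3$, there are enough rows. Because the entries of $\mathcal{K}$ are rational in $(M,\mathcal{C}_r)$ (Proposition~\ref{prop::VecOfEquation}), it suffices to find one square $|E|\times|E|$ submatrix whose determinant is a nonzero rational function. That in turn follows from exhibiting a single parameter point in $\mathbb{R}^E_{\mathrm{stab}}$ where the submatrix is nonsingular. The exception set is then a proper algebraic subset, exactly as in Section~\ref{sec:proof-reorganize}.

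For the witness point I would take $M$ diagonal, $M = -\tfrac{1}{r\zeta} I$, and $\mathcal{C}_r$ equal to the identity tensor. This $M$ is stable and lies in $\mathbb{R}^E$ because all self-loops are present. Corollary~\ref{cor::simplifiedTrekRuleDAGs} applied to the empty DAG then gives $\mathcal{K} = \zeta I_{d\times\cdots\times d}$, and in particular $\mathcal{K}_{i\dots i}=\zeta$. Now pick rows using \eqref{eq::generalAmatrixentry} and \eqref{eq::A3KEntries}.
\begin{itemize}
\item For a self-loop column $i\to i$, take the row $(i\dots i)$. Its entry there is $r\mathcal{K}_{i\dots i}=r\zeta$.
\item For an edge column $\alpha\to\beta$ with $\alpha\neq\beta$, take the row $(\alpha\dots\alpha\beta)$. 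By \eqref{eq::A3KEntries} its entry in column $\alpha\to\beta$ is $\mathcal{K}_{\alpha\dots\alpha\alpha}=\zeta$.
\end{itemize}
These row indices are all distinct, so the chosen submatrix is square.

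The main check is that this submatrix is triangular, or diagonal, at the witness point. Since $\mathcal{K}$ is diagonal there, $A_r(\mathcal{K})_{(\alpha\dots\alpha\beta),(\gamma\to\delta)}$ is nonzero only if $\delta\in\{\alpha,\beta\}$ and the remaining multi-index together with $\gamma$ is constant. If $\delta=\beta$, the remaining indices are $\alpha,\dots,\alpha$, forcing $\gamma=\alpha$. If $\delta=\alpha$, the remaining indices contain both $\alpha$ and $\beta$, so the entry is zero. Row $(i\dots i)$ is nonzero only in column $i\to i$. Hence the submatrix is diagonal with nonzero entries $r\zeta$ and $\zeta$, so its determinant is nonzero. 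I expect this bookkeeping of integer multiplicities and index cases to be the only real obstacle. If the diagonal witness causes trouble for some case of $r$, I would fall back on a lowest-degree-in-$\zeta$ argument as in the proof of Theorem~\ref{thm::GenericIdResult}.

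Finally, note the contrast with the main theorem. There the unknown $\mathcal{C}_r$ forced the diagonal rows to be spent on identifying $\mathcal{C}_r$, and connectivity was needed. Here those rows carry the identification of the self-loops, so no connectivity or polytree structure is required, and the full rank holds for any $G$ with self-loops. It also yields uniqueness and not only uniqueness up to scale, because the right-hand side $\mathrm{vec}_u(\mathcal{C}_r)$ is fixed and nonzero.
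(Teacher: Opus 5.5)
Your proposal is correct and follows essentially the same route as the paper: a diagonal stable witness $M$ makes $\mathcal{K}$ diagonal, and you choose rows $(i\dots i)$ for the self-loops and $(\alpha\dots\alpha\beta)$ for the edges $\alpha\to\beta$. The resulting square submatrix of $A_r(\mathcal{K})$ is then diagonal with entries $r\mathcal{K}_{i\dots i}$ and $\mathcal{K}_{\alpha\dots\alpha}$. The paper allows any stable diagonal $M$ and any known nonzero diagonal $\mathcal{C}_r$, whereas you fix $\mathcal{C}_r$ to the identity tensor; your argument carries over verbatim to a general known diagonal $\mathcal{C}_r$, since for your $M$ one gets $\mathcal{K}=\zeta\,\mathcal{C}_r$.
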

\begin{proof}
    To show generic identifiability, we exhibit a choice of $M$ in $\mathbb{R}^{E}_{\mathrm{stab}}$ such that a suitable submatrix of $A_r(\mathcal{K} )$ as defined by equation \eqref{eq::VecThirdOrder} has rank $|E|$, so that we can solve the system in equation \eqref{eq::VecThirdOrder} for the non-zero entries of $M$.

    We let $M$ be any stable diagonal matrix (so it has negative values on the diagonal). This choice of $M$ will be in $\mathbb{R}^{E}_{\mathrm{stab}}$ for any directed graph with all self-loops, which we have assumed the graph to have. When $M$ is diagonal (with all diagonal elements non-zero) it follows by the general trek-rule that exactly $\mathcal{K} _{i \dots i} \neq 0$ for all $i$ but all other entries of $\mathcal{K} $ are zero since $\mathcal{C}_r$ is assumed diagonal.  

    Consider the submatrix $A$ of $A_r(\mathcal{K} )$ with the columns indexed by $i \rightarrow j \in E$ 
    and rows indexed by  $(i \dots i j)$ for $i \rightarrow j \in E$. This especially includes the diagonal rows $(i \dots i)$ for all $i \in [d]$). 

    By equation \eqref{eq::A3KEntries} we see that see that 
    $A_r(\mathcal{K} )_{(i \dots ij), (i \rightarrow j)} = \mathcal{K} _{i \dots i}$ for $i \neq j$ and $i \rightarrow j \in E$ and that $A_r(\mathcal{K} )_{(i \dots ij), (\alpha \rightarrow \beta)}$ is never equal to a diagonal entry of $\mathcal{K} $ if $(\alpha \rightarrow \beta) \neq (i \rightarrow j)$ by equation \eqref{eq::A3KEntries}. 
    
    Furthermore, $A_r(\mathcal{K} )_{(i \dots i), i \rightarrow i} = r \mathcal{K} _{i \dots i}$ and for any other column $A_r(\mathcal{K} )_{(i \dots i), (\alpha \rightarrow \beta)}$ is never equal to a diagonal entry of $\mathcal{K} $.
    
    Thus, we conclude that the submatrix $A$ has diagonal entries of $\mathcal{K} $ (upto multiplication by $r$) on its diagonal and all off-diagonal entries equal to off-diagonal entries of $\mathcal{K} $. Thus, by picking $M$ to be diagonal with negative diagonal the determinant of $A$ is just a product of diagonal entries of $\mathcal{K} $
    \begin{equation*}
        \mathrm{det}(A) = \prod_{i \rightarrow j \in E, i \neq j} \mathcal{K} _{i \dots i} \prod_{i \in [d]} r \mathcal{K} _{i \dots i},
    \end{equation*}
    which are all non-zero by choice of $M$ and the diagonal of $\mathcal{C}_r$ being non-zero . Therefore, $\mathrm{det}(A) \neq 0$ for this choice of $M$, which completes the proof.   
\end{proof}

\subsection{Auxiliary results when the Lévy process is a compound Poisson process} \label{sec:AuxResultsCompoundPoisson}

Since $Z_1$ is infinitely divisible, its characteristic function is of the form
$\mathbb{E}(e^{iz^TZ_1}) = e^{\Psi(z)}$ where $\Psi$ is the Lévy 
exponent. Its $k$-th-order cumulants can then be expressed as
\begin{equation}
    \label{eq:cumulants_exponent}
	\mathcal{C}_k = (-i)^n D^k\Psi(0)
\end{equation}
provided that $\Psi$ is $k$ times differentiable in $0$. If the Lévy process is 
a compound Poisson process, its Lévy exponent is of the form 
\[
    \Psi(z) = \boldsymbol{\lambda} \int e^{iz^Tu} P(\mathrm{d} u), 
\]
where $\boldsymbol{\lambda}  > 0$ is the rate parameter of the homogeneous Poisson process, and 
the probability measure $P$ is the jump distribution. Using \eqref{eq:cumulants_exponent}
we find that for a compound Poisson process, the $k$-th-order cumulant for $k \geq 2$ equals

\begin{equation}
    \label{eq:comp_pois_cum}
	\mathcal{C}_k = \boldsymbol{\lambda}  \int \underbrace{u \otimes u \otimes \ldots \otimes u}_{k \text{ factors}} P(\mathrm{d} u). 
\end{equation}

Suppose next that the coordinates of $Z$ are, in fact, independent univariate compound Poisson processes.
The $i$-th coordinate, denoted $Z^{(i)}$, has Lévy measure $\lambda_i P_i$, where $\lambda_i > 0$ 
and $P_i$ is the univariate probability distribution of the jumps of $Z^{(i)}$. Then $Z$ is a 
compound Poisson process with $\boldsymbol{\lambda}  = \lambda_1 + \ldots + \lambda_p$
and
$$
	P = \sum_{i=1}^d \frac{\lambda_i}{\boldsymbol{\lambda} } \tilde{P}_i,
$$
where the probability measure $\tilde{P}_i$ on $\mathbb{R}^d$ is given by
$$
	\int f(u) \tilde{P}_i(\mathrm{d}u) = \int f(0, \ldots, 0, u_i, 0, \ldots, 0) P_i(\mathrm{d} u_i).
$$
From this it follows that $\mathcal{C}_k$ is diagonal with
\begin{equation}
    \label{eq:comp_pois_diag_cum}
	\mathrm{diag}(\mathcal{C}_k) = \left(
	\lambda_1 \int u^k \ P_1(\mathrm{d} u), \ldots, \lambda_p \int u^k \ P_p(\mathrm{d} u)\right).
\end{equation}
Note that for a compound Poisson process with independent coordinates, it is with probability one
always only one coordinate that jumps at a time.

We can use specific compound Poisson processes to prove the following about the surjectivity of $\pi_{2,r}$. 
\begin{proposition} \label{prop::piSurjective}
Let r be an integer such that $r \geq 3$. 
\begin{enumerate}[label = (\roman*),leftmargin=*, widest=iii]
        \item If $r$ is odd the map $\pi_{2,r}$ is surjective. 
        \item if $r$ is even the map $\pi_{2,r}$ is not surjective but does map onto a fully dimensional subset of $\Theta_G$.
    \end{enumerate}
\end{proposition}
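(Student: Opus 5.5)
Since $\pi_{2,r}$ acts as the identity on the $M$-coordinate, everything reduces to the Lévy part: we must decide which pairs $(c_2, c_r) \in \mathbb{R}^d_+ \times (\mathbb{R}\setminus\{0\})^d$ occur as diagonals of $(\mathrm{cum}_2(Q_1), \mathrm{cum}_r(Q_1))$ with $Q_1 \in \mathcal{Q}^{2,r}$. The plan is to build $Q_1$ as the law at time one of a Lévy process with independent coordinates, each a univariate compound Poisson process. Such a process has finite moments of all orders whenever the jump distributions do, and the integrability condition \eqref{eq:levy-int-cond2} then holds trivially. By \eqref{eq:comp_pois_diag_cum}, $\mathcal{C}_2$ and $\mathcal{C}_r$ are diagonal with entries $\lambda_i \int u^2 P_i(\mathrm{d}u)$ and $\lambda_i \int u^r P_i(\mathrm{d}u)$. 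The problem therefore decouples coordinatewise into the univariate question: given $a > 0$ and $b \neq 0$, find $\lambda > 0$ and a jump law $P$ with $\lambda \int u^2 \,\mathrm{d}P = a$ and $\lambda \int u^r \,\mathrm{d}P = b$.

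For $r$ odd I take $\lambda = 1$ and a point mass $P = \delta_x$. The two equations become $x^2 = a$ and $x^r = b$. These force $x = \mathrm{sgn}(b)\sqrt{a}$, which fixes $x^r$ at $\pm a^{r/2}$ rather than matching an arbitrary $b$. So one atom is not enough. To fix this I use a two-point jump law $P = p\,\delta_x + (1-p)\,\delta_{-y}$ with $x, y > 0$, or equivalently I free up $\lambda$ together with the atom. With $\lambda$ free and $P = \delta_x$, the equations are $\lambda x^2 = a$ and $\lambda x^r = b$. Dividing gives $x^{r-2} = b/a$. Because $r-2$ is odd, this has a unique real solution $x$ with the sign of $b$. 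Then $\lambda = a/x^2 > 0$. This settles (i) directly, and applying it in each coordinate gives surjectivity onto $\Theta_G$.

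For $r$ even, $\int u^r \,\mathrm{d}P \geq 0$ for every jump law, so the compound Poisson construction can only reach $b > 0$. I then need to argue that every infinitely divisible law gives $\mathcal{C}_r$ diagonal entries that are nonnegative. For this I use the Lévy--Khintchine representation together with \eqref{eq:cumulants_exponent}. For $r \geq 3$, the Gaussian and drift parts contribute nothing to the $r$-th cumulant, which therefore equals the Lévy-measure integral $\int u_i^r \,\nu(\mathrm{d}u) \geq 0$. Combined with the assumption that the entries are nonzero, this places the image inside $\mathbb{R}^E_{\mathrm{stab}} \times \mathbb{R}^d_+ \times (0,\infty)^d$. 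That set misses every parameter with some negative entry of $\mathcal{C}_r$, so $\pi_{2,r}$ is not surjective. For the full-dimensionality claim, the construction $\lambda = a/x^2$ with $x = (b/a)^{1/(r-2)} > 0$ hits every point with $b > 0$. The image is therefore exactly the open set $\mathbb{R}^E_{\mathrm{stab}} \times \mathbb{R}^d_+ \times (0,\infty)^d$, which has full dimension.

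The main obstacle I expect is making rigorous that the $r$-th cumulant of $Z_1$ is the moment of the Lévy measure for a general infinitely divisible law with finite $r$-th moment. This requires checking that finite moments of $Z_1$ imply integrability of $|u|^r$ against $\nu$ away from the origin, so that $\Psi$ is $r$ times differentiable with the stated derivative. I would cite Sato's standard moment theorem for infinitely divisible distributions for this step rather than reprove it.
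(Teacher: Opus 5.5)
Your proof is correct. It shares the paper's reduction: $\pi_{2,r}$ is the identity on $M$, the coordinates are independent univariate compound Poisson processes, and the cumulants equal the rate times the raw jump moments. The construction and the even case differ, however.

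The paper implicitly fixes the rate at one and matches $\mathbb{E}[X^2]=c_2$, $\mathbb{E}[X^r]=c_r$ with a two-point jump law:
\begin{itemize}
\item when $|c_r|\ge c_2^{r/2}$, an atom at $x=\mathrm{sign}(c_r)|c_r/c_2|^{1/(r-2)}$ with mass $p=c_2/x^2$, plus an atom at $0$;
\item when $r$ is odd and $|c_r|<c_2^{r/2}$, atoms at $\pm\sqrt{c_2}$.
\end{itemize}
Jumps of size zero are invisible, so the first case is exactly your single-atom process with rate $\lambda=p$. Leaving the rate free, as you do, removes the constraint $p\le 1$ and with it the case split.

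For $r$ even your route gives more than the paper's. The paper supports non-surjectivity only by a Jensen remark on the jump law. That remark constrains unit-rate constructions but not $Q_1$, and your construction indeed reaches every $b>0$. Its Lemma (ii) as written is also wrong for $c_r<0$, where its construction gives $\mathbb{E}[X^r]=|c_r|\neq c_r$.

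The actual obstruction is the one you identify: $\mathcal{C}_r=\int u^{\otimes r}\,\nu(\mathrm{d}u)$ for $r\ge 3$, which forces nonnegative diagonal entries when $r$ is even. You therefore obtain the exact image $\mathbb{R}^E_{\mathrm{stab}}\times\mathbb{R}^d_+\times(0,\infty)^d$. The paper only exhibits the smaller full-dimensional set where $c_r\ge c_2^{r/2}$ holds coordinatewise.

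The cost of your route is the moment-transfer step you flagged: $Q_1$ has finite $r$-th moment iff $\int_{\{|u|>1\}}|u|^r\,\nu(\mathrm{d}u)<\infty$. This is standard, and the paper avoids it only because it never actually proves the non-surjectivity claim.
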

\begin{proof}
    The proof is constructive and only focuses on the part of the map $Q_1 \mapsto (\mathcal{C}_2, \mathcal{C}_r)$, since $\pi_{2,r}$ is the identity on the $M$ coordinate. 

    Furthermore, since $\mathcal{C}_2$ and $\mathcal{C}_r)$ are assumed diagonal, we can construct the example by considering a Lévy process, and corresponding infinitely divisible distribution, with independent coordinates. For this reason, it is enough to construct a univariate 
    Lévy process with a given covariance $c_2$ and $r$-th-order cumulant $c_r$. 

     We let the Lévy process be a compound Poisson process. 
     By equation \eqref{eq:comp_pois_diag_cum} the cumulants of the compound Poisson can be found as the rate parameter multiplied by the raw moments of the jump distribution, $P$, 
    \begin{equation*}
        \mathcal{C}_r = \lambda \int u^k \ P(\mathrm{d} u).
    \end{equation*}
    Thus, (i) and (ii) follow by (i) and (ii) in Lemma \ref{lem::rawmomentsexample}. 
\end{proof}

\begin{lemma} \label{lem::rawmomentsexample}
    Let $r$ be an integer such that $r \geq 3$ and $(c_2, c_r) \in \mathbb{R}^2$ with $c_2 > 0$ and $c_r \neq 0$. 
    \begin{enumerate}[label = (\roman*),leftmargin=*, widest=iii]
        \item If $r$ is odd there exists a probability distribution $P$ such that $\mathbb{E}[X^2] = c_2$ and $\mathbb{E}[X^r] = c_r$  for $X \sim P$. 
        \item if $r$ is even and $|c_r| \geq c_2^{r/2}$ there exists a probability distribution $P$ such that $\mathbb{E}[X^2] = c_2$ and $\mathbb{E}[X^r] = c_r$  for $X \sim P$.
    \end{enumerate}
\end{lemma}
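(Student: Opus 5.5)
We construct $P$ explicitly as a distribution with at most two atoms. The one-atom case gives the idea. Taking $P = \delta_a$ gives $\mathbb{E}[X^2] = a^2$ and $\mathbb{E}[X^r] = a^r$. This only reaches pairs with $c_r = \pm c_2^{r/2}$, so we need an extra degree of freedom. The natural choice is a mixture with an atom at zero,
\[
P = p\,\delta_a + (1-p)\,\delta_0, \qquad p \in (0,1],\ a \neq 0.
\]
Its moments are $\mathbb{E}[X^2] = p a^2$ and $\mathbb{E}[X^r] = p a^r$. Dividing, the equations become $a^{r-2} = c_r/c_2$ and $p = c_2/a^2$.

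\textbf{Case (i), $r$ odd.} Here $r-2$ is odd, so $a = (c_r/c_2)^{1/(r-2)}$ is a well-defined nonzero real number of the same sign as $c_r$. The only constraint left is $p \le 1$, i.e.\ $a^2 \ge c_2$, and this fails when $|c_r|$ is small relative to $c_2^{r/2}$.

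To fix this, replace the zero atom by a symmetric pair $\tfrac{1-p}{2}(\delta_{b} + \delta_{-b})$. The pair adds $(1-p)b^2$ to the second moment and contributes nothing to odd moments. The equations become
\[
c_r = p a^r, \qquad c_2 = p a^2 + (1-p) b^2 .
\]
First choose $a$ with sign equal to that of $c_r$ and $|a|$ so large that $p = c_r/a^r \in (0,1)$ and $p a^2 = c_r a^{2-r} < c_2$. Both hold for large $|a|$ because $r \ge 3$. Then set $b^2 = (c_2 - p a^2)/(1-p) > 0$. This settles (i).

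\textbf{Case (ii), $r$ even.} Here $c_r$ must be positive for any distribution, since $\mathbb{E}[X^r] > 0$. I read the hypothesis $|c_r| \ge c_2^{r/2}$ with $c_r > 0$ implicitly (for $c_r < 0$ no distribution exists, which is exactly the non-surjectivity). Use the two-atom form $P = p\,\delta_a + (1-p)\,\delta_0$ with $a^{r-2} = c_r/c_2$, taking the positive root. Then $a^2 = (c_r/c_2)^{2/(r-2)} \ge c_2$ holds exactly when $c_r \ge c_2^{r/2}$, so $p = c_2/a^2 \in (0,1]$. This gives (ii).

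\textbf{Full-dimensionality.} For the accompanying remark that the image is full-dimensional, note that $\{c_r > c_2^{r/2}\}$ is an open set. The main obstacle is only the bookkeeping of case (i) for small $|c_r|$, which the symmetric pair handles.
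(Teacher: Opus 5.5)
Your proof is correct. For (ii) it coincides with the paper's construction ($b=0$, $a=(c_r/c_2)^{1/(r-2)}$, $p=c_2/a^2$). For (i) your route differs somewhat from the paper's.

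The paper splits (i) into two regimes. When $|c_r|\ge c_2^{r/2}$ it uses the same atom-at-zero law as in (ii). When $|c_r|<c_2^{r/2}$ it uses the two-point law on $\pm\sqrt{c_2}$ with $p=\tfrac12(1+c_r/c_2^{r/2})$. So every case is realised by an (at most) two-point distribution with closed-form parameters.

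Your three-atom law $p\,\delta_a+\tfrac{1-p}{2}(\delta_b+\delta_{-b})$ with $|a|$ large handles all $c_r\neq 0$ at once, with no case split. The symmetric pair contributes nothing to the odd moment and absorbs the leftover variance, and $r\ge 3$ makes $c_r a^{2-r}\to 0$. It also gives a one-parameter family of solutions. The cost is a third atom and a less explicit parametrisation, and neither matters for the existence claim used in Proposition~\ref{prop::piSurjective}.

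Your caveat in (ii) is a genuine correction, not just a reading convention. As stated, (ii) is false for $c_r<0$, because $\mathbb{E}[X^r]>0$ whenever $r$ is even and $c_2>0$. The paper's choice $a=\mathrm{sign}(c_r)\,|c_r/c_2|^{1/(r-2)}$ in fact yields $\mathbb{E}[X^r]=|c_r|$ in that case. The correct hypothesis is $c_r\ge c_2^{r/2}$, which matches the Jensen remark following the lemma.
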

\begin{proof}
    The proof is constructive. 
    We provide an example of 2-point distribution $P$ such that $\mathbb{E}[X^2] = c_2$ and $\mathbb{E}[X^r] = c_r$ if $X \sim P$, with the extra condition $|c_r| \geq c_2^{r/2}$ if $r$ is even. Let
    \begin{align*}
        P( X = a) = p, \qquad P(X = b) = 1-p
    \end{align*}
    for $a,b \in \mathbb{R}$ and $p \in [0,1]$. Then the condition $\mathbb{E}[X^2] = c_2$ and $\mathbb{E}[X^r] = c_r$ is equivalent to being able to solve the following system of two polynomial equations
    \begin{align} \label{eq::momenteq}
        c_2 = a^2 p + b^2 (1-p), \qquad c_r = a^r p + b^r (1-p)
    \end{align}
    for $a$, $b$ and $p$. 

    If $|c_r| \geq c_2^{r/2}$ then 
    \begin{align*}
        b = 0, \quad a = \mathrm{sign}(c_r) \cdot \left| \frac{c_r}{c_2} \right|^{1/(r-2)}, \quad p = c_2 \left| \frac{c_2}{c_r} \right|^{2/(r-2)}
    \end{align*}
    is a solution to the polynomial equations \eqref{eq::momenteq}, and the assumption $|c_r| \geq c_2^{r/2}$ guarantees that $p \in (0,1]$. If $|c_r| = c_2^{r/2}$ we have $p = 1$, and the distribution becomes a 1-point distribution. This proves (ii) and acts as the first case of (i). 

    If $|c_r| < c_2^{r/2}$ and $r$ is odd then
    \begin{align*}
        b = - \sqrt{c_2}, \quad a = \sqrt{c_2}, \quad p = \frac{1}{2} \left( 1+ \frac{c_r}{c_2^{r/2}} \right)
    \end{align*}
    is a solution to the polynomial equations \eqref{eq::momenteq}. The assumption that $|c_r| < c_2^{r/2}$ guarantees that $p \in (0,1)$. If $r$ were even, $a^r = b^r$, and it follows that $a$, $b$ and $p$ is only a solution for odd $r$. 
    This concludes the proof of (i). 
\end{proof}

\begin{remark}
The inequality for even $r$ in (ii) is necessary in the sense that it can be seen by Jensen's inequality that $\mathbb{E}[X^{r}] \geq \mathbb{E}[X^2]^{r/2}$ for any probability distribution when $r$ is even. 
\end{remark}

\subsection{Details regarding the numerical experiments and additional results}
\label{sec:numerical_details} 
Simulations from the steady-state distribution are obtained by using the representation 
\eqref{eq:MSD}. Supposing that $M$ can be diagonalized as $M = Q D Q^{-1}$, where
$D = \mathrm{diag}(\delta_1, \ldots, \delta_d)$ is a diagonal matrix of
complex eigenvalues, we get that
\begin{equation}
    \label{eq:MSD_diag}
	X = \int_0^{\infty}  e^{sM} \mathrm{d} Z_s                
	  = \int_0^{\infty}  Q e^{sD} Q^{-1} \mathrm{d} Z_s 
	  = Q \begin{pmatrix}
	      \int_0^{\infty} e^{\delta_1 s} \mathrm{d} \tilde{Z}_{s}^{(1)} \\
			\vdots                                                          \\
			\int_0^{\infty} e^{\delta_d s} \mathrm{d} \tilde{Z}_{s}^{(d)}
	  \end{pmatrix}
\end{equation}
where $\tilde{Z}_{s}^{(i)} = (Q^{-1} Z_s)_i$. 

Our numerical experiments use simulations from $M$-selfdecomposable distributions given in terms of:
\begin{enumerate}
	\item A Lévy process $Z$ with independent coordinates, each being a compound Poisson process with beta distributed jumps.
	\item A stable $M$-matrix parametrized by a correlation parameter $\rho$ and a parameter $\gamma$ controlling the asymmetry of $M$.
\end{enumerate}
The $i$-th compound Poisson process is
$$
	Z^{(i)}_t = \sum_{j=1}^{N^{(i)}_t} J_j^{(i)},
$$
where $N^{(i)}$ is a homogeneous Poisson process with rate $\lambda_i > 0$ and
$J_1^{(i)}, J_2^{(i)}, \ldots $ are i.i.d. beta distributed jumps. In this case, 
each univariate stochastic integral in \eqref{eq:MSD_diag} is an infinite sum over jumps of $\tilde{Z}_{s}^{(i)}$, 
\begin{equation}
\label{eq:jump_rep}
\int_0^{\infty} e^{\delta_i s} \mathrm{d} \tilde{Z}_{s}^{(i)} = \sum_{j=1}^{\infty}  
\tilde{J}_{j}^{(i)} e^{\delta_i s_j}, 
\end{equation}
with the exponential factor decaying rapidly due to stability of $M$. Simulations of $X$ 
are implemented via truncation of the infinite series \eqref{eq:jump_rep}.

The $i$-th Poisson process has rate $\lambda_i > 0$, and all $d$ independent 
compound Poisson processes have beta distributed jumps sharing the parameters
$\nu > 0$ and $\mu \in (0, 1)$, known as the mean and size parameters, respectively, 
of a beta distribution. The shape parameters 
are given as $(\alpha, \beta) = (\mu \nu, (1 - \mu) \nu)$ in terms of these parameters. 

Since the $k$-th raw moment of a beta distribution is 
\[
    \int u^k P(\mathrm{d} u) = \prod_{r=0}^{k-1} \frac{\mu \nu + r}{v + r},
\]
the diagonal $k$-th-order cumulants are by \eqref{eq:comp_pois_diag_cum} given as 
$\mathcal{C}_{1,i} = \mathbb{E}(Z^{(i)}_1) = \lambda \mu$ and 
\[
    \mathcal{C}_{k, i\ldots i} =  \lambda \prod_{r=0}^{k-1} \frac{\mu \nu + r}{\nu + r} 
    = \frac{\mu \nu + k - 1}{\nu + k - 1} \mathcal{C}_{k - 1, i\ldots i}
\]
for $k \geq 2$ and $i = 1, \ldots, d$. The $d \times d$ matrix $M$ is parametrized as 
$$
	M =  \left(\gamma E^{\mathrm{skew}} -  d I \right)
	\left(I - \eta E \right) =
     \left(
	\begin{array}{cccc}
			-d & \gamma                           & \ldots & \gamma                           \\
			- \gamma                         & -d & \ldots & \gamma                           \\
			\vdots                           & \vdots                           & \ddots & \vdots                           \\
			- \gamma                         & - \gamma                         & \ldots & -d
		\end{array} \right)
	\left(\begin{array}{cccc}
			1 - \eta & - \eta   & \ldots & - \eta   \\
			- \eta   & 1 - \eta & \ldots & - \eta   \\
			\vdots   & \vdots   & \ddots & \vdots   \\
			- \eta   & - \eta   & \ldots & 1 - \eta
		\end{array} \right).
$$
where $I$ is the identity matrix, $E$ is the matrix with $E_{ij} = 1$, and $E^{\mathrm{skew}}$
is the skew symmetric matrix with $E^{\mathrm{skew}}_{ij} = 1$ for $i < j$. The parameters are 
$\gamma \in \mathbb{R}$ and $\eta < 1/d$.
\begin{figure}
    \centering
    \includegraphics[width=\linewidth]{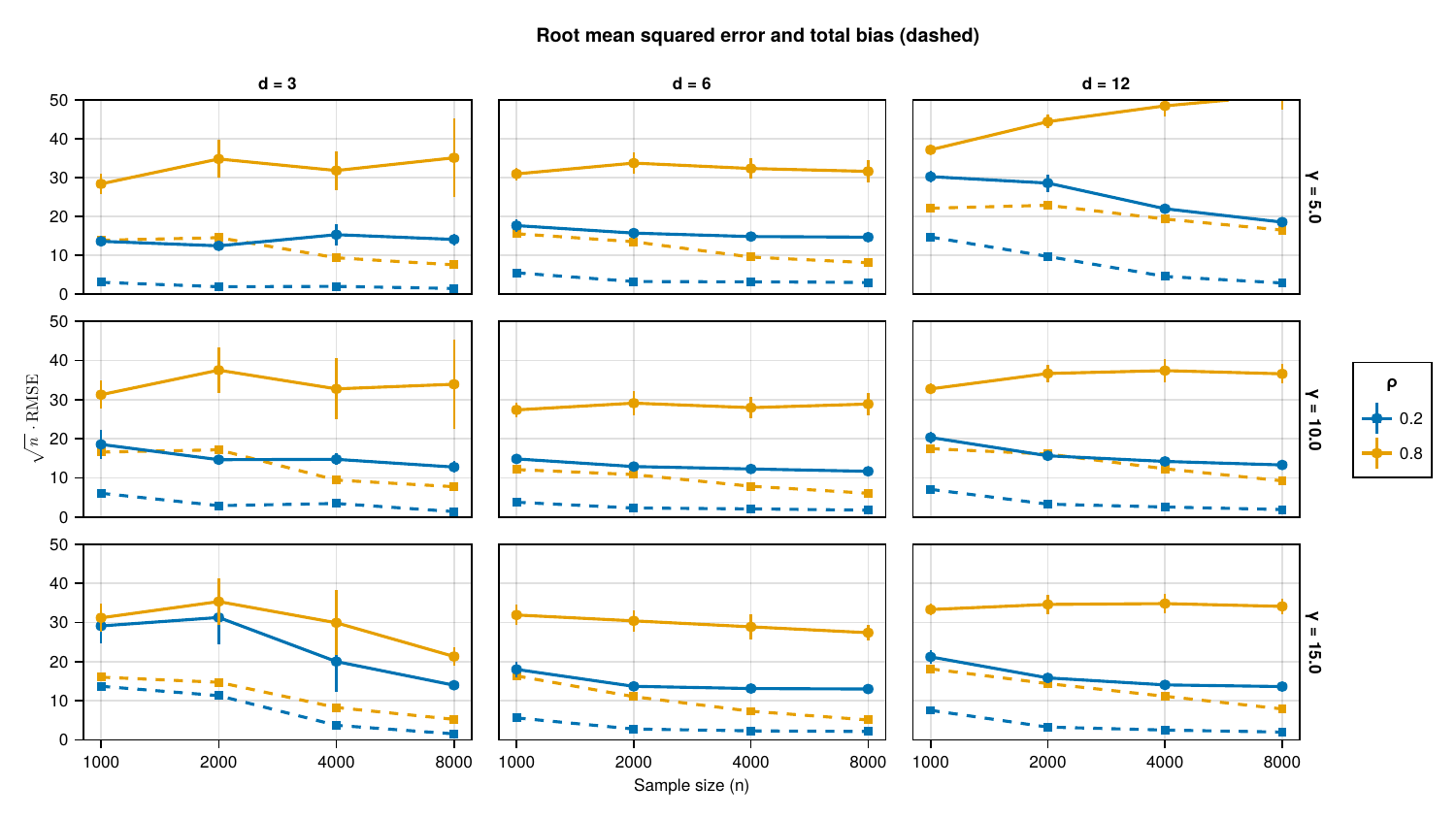}
    \caption{Estimation errors for the least singular value estimator based on all second-, third- and fourth-order cumulants. Circles connected with full lines show the $\sqrt{n}$-scaled root mean squared error, while the squares connected with dashed lines show the $\sqrt{n}$-scaled total bias.}
    \label{fig:estimation_errors_k4}
\end{figure}
If $\lambda_1 = \ldots = \lambda_d = \lambda$, then with 
$$
    c = \frac{\lambda \mu (\mu \nu + 1)}{2d (\nu + 1)}  
$$
we have $\mathcal{C}_{2, ii} = 2 d c$, and it is straightforward to see that the 
positive definite matrix 
$\Sigma = c \left(I - \eta E \right)^{-1}$ solves the second-nd order Lyapunov equation 
for the choice of $M$ above. By the Sherman-Morrison formula
$$
	\Sigma = c \left(I - \eta E \right)^{-1} = c \left(I + \frac{\eta}{1 - d \eta} E\right).
$$
The variance of all coordinates is
$$
	\sigma^2 = c\left(1 +  \frac{\eta}{1 - d \eta}\right) = c \frac{1 + (1 - d)\eta}{1 - d \eta},
$$
and the correlation between any two coordinates is
$$
	\rho = \frac{\eta}{1 + (1 - d)\eta} = \frac{1}{\eta^{-1} + (1 - d)}.
$$
The correlation is thus entirely determined by $\eta$, it increases monotonically with $\eta$,
it is positive for $\eta \in (0, 1/d)$, tending to $1$ for $\eta \to 1/d$, it is $0$ for $\eta = 0$,
and it is negative for $\eta < 0$, tending to $- 1 / (d - 1)$ for $\eta \to -\infty$. The parameter
$\gamma$ does not affect the covariance matrix but determines the asymmetry of $M$. 
For the numerical experiments, $M$ is parametrized by the correlation parameter 
$\rho$ via 
$$
    \eta = \frac{\rho}{1 + \rho (d - 1)}
$$
for $\rho \in (-1/(d-1), 1)$.

\newpage
\bibliographystyle{biometrika}
\bibliography{paper-ref}

\end{document}